\newenvironment{enumalph}{\begin{enumerate}  }{\end{enumerate}}
\newcommand{\subgrp}[1]{\langle #1 \rangle}
\newcommand{\set}[1]{\left\{ #1 \right\}}
\newcommand{\abs}[1]{\left| #1 \right|}
\newcommand{\wh}[1]{\widehat{ #1 }}
\newcommand{\ol}[1]{\overline{#1}}
\newcommand{\rank}{\operatorname{rank}}
\newcommand{\chr}{\operatorname{char}}
\newcommand{\End}{\operatorname{End}}
\newcommand{\Ext}{\operatorname{Ext}}
\newcommand{\Hom}{\operatorname{Hom}}
\newcommand{\ind}{\operatorname{ind}}
\newcommand{\Lie}{\operatorname{Lie}}
\newcommand{\Maxspec}{\operatorname{MaxSpec}}
\newcommand{\res}{\operatorname{res}}
\newcommand{\Ad}{\operatorname{Ad}}
\newcommand{\opH}{\operatorname{H}}
\newcommand{\hy}{\operatorname{hy}}
\newcommand{\id}{\operatorname{id}}
\newcommand{\Deltatau}{{}^\tau \Delta}
\newcommand{\Stau}{{}^\tau S}
\newcommand{\etau}{{}^\tau \varepsilon}
\newcommand{\F}{\mathbb{F}}
\newcommand{\N}{\mathbb{N}}
\newcommand{\Q}{\mathbb{Q}}
\newcommand{\R}{\mathbb{R}}
\newcommand{\Z}{\mathbb{Z}}
\newcommand{\calN}{\mathcal{N}}
\newcommand{\calV}{\mathcal{V}}
\newcommand{\frakb}{\mathfrak{b}}
\newcommand{\fraku}{\mathfrak{u}}
\newcommand{\g}{\mathfrak{g}}
\newcommand{\upA}{\mathsf{A}}
\newcommand{\upB}{\mathsf{B}}
\newcommand{\scrA}{\mathscr{A}}
\newcommand{\Uz}{U_\zeta}
\newcommand{\Uzo}{U_\zeta^0}
\newcommand{\Uzb}{U_\zeta(B)}
\newcommand{\Uzbp}{U_\zeta(B^+)}
\newcommand{\Uzbr}{U_\zeta(B_r)}
\newcommand{\Uzg}{U_\zeta(\g)}
\newcommand{\Uztr}{U_\zeta(T_r)}
\newcommand{\Uzn}{U_\zeta(N)}
\newcommand{\Uztn}{U_\zeta(TN)}
\newcommand{\Uztgr}{U_\zeta(TG_r)}
\newcommand{\Uztbr}{U_\zeta(TB_r)}
\newcommand{\Uzgr}{U_\zeta(G_r)}
\newcommand{\Uzurm}{U_\zeta(U_{r,m})}
\newcommand{\Uzur}{U_\zeta(U_r)}
\newcommand{\uz}{u_\zeta}
\newcommand{\uzg}{u_\zeta(\g)}
\newcommand{\uzu}{u_\zeta(\fraku)}
\newcommand{\uzup}{u_\zeta(\fraku^+)}
\newcommand{\uzb}{u_\zeta(\frakb)}
\newcommand{\uzbp}{u_\zeta(\frakb^+)}
\newcommand{\uzo}{u_\zeta^0}
\newcommand{\whZr}{\wh{Z}_r}
\newcommand{\Vg}{\calV_{\uzg}}
\newcommand{\Vb}{\calV_{\uzb}}
\newcommand{\Vbp}{\calV_{\uzbp}}
\numberwithin{equation}{subsection}
\newtheorem{theorem}{Theorem}[subsection]
\newtheorem{proposition}[theorem]{Proposition}
\newtheorem{corollary}[theorem]{Corollary}
\newtheorem{lemma}[theorem]{Lemma}
\newtheorem*{theorem*}{Theorem}
\newtheorem*{lemma*}{Lemma}
\theoremstyle{definition}
\newtheorem{assumption}[theorem]{Assumption}
\newtheorem{remark}[theorem]{Remark}
\newtheorem{remarks}[theorem]{Remarks}
\title[On injective modules and support varieties for the small quantum group]{On injective modules and support varieties \\ for the small quantum group}
\thanks{This paper previously titled \emph{Injectivity criteria and support varieties for the small quantum group}}
\author{Christopher M.\ Drupieski}
\address{
Department of Mathematics \\
University of Georgia \\
Athens, GA~30602-7403}
\email{cdrup@math.uga.edu}
\subjclass[2000]{Primary 17B37; Secondary 20G10}
\begin{document}

\begin{abstract}
Let $\uzg$ denote the ``small quantum group'' associated to the simple complex Lie algebra $\g$, with parameter $q$ specialized to a primitive $\ell$-th root of unity $\zeta$ in the field $k$. Generalizing a result of Cline, Parshall and Scott, we show that if $M$ is a finite-dimensional $\uzg$-module admitting a compatible torus action, then the injectivity of $M$ as a module for $\uzg$ can be detected by the restriction of $M$ to certain ``root subalgebras'' of $\uzg$. If $\chr(k)= p > 0$, then this injectivity criterion also holds for the higher Frobenius--Lusztig kernels $U_\zeta(G_r)$ of the quantized enveloping algebra $U_\zeta(\g)$. Now suppose that $M$ lifts to a $U_\zeta(\g)$-module. Using a new rank variety type result for the support varieties of $\uzg$, we prove that the injectivity of $M$ for $\uzg$ can be detected by the restriction of $M$ to a single root subalgebra.
\end{abstract}

\maketitle

\section*{Introduction}

Let $k$ be an algebraically closed field of characteristic $p > 0$, and let $G$ be the semisimple, simply-connected algebraic group over $k$ with root system $\Phi$. Fix a maximal torus $T \subset G$, defined and split over the prime field $\F_p$, such that $\Phi$ is the root system of $T$ in $G$. For $\alpha \in \Phi$, let $U_\alpha$ be the corresponding one-dimensional root subgroup in $G$. Set $U = \subgrp{U_\alpha:\alpha \in \Phi^-}$, the unipotent radical of the Borel subgroup $B = TU \subset G$. Let $F: G \rightarrow G$ denote the Frobenius morphism, and for $H$ a closed $F$-stable subgroup of $G$, let $H_r$ denote the (scheme-theoretic) kernel of the $r$-th iterate $F^r$ of the Frobenius morphism $F|_H: H \rightarrow H$.

In their paper \cite{Cline:1985}, Cline, Parshall and Scott showed that a finite-dimensional $TG_r$-module $M$ is injective if and only if its restriction $M|_{U_{\alpha,r}}$ is injective for each root $\alpha \in \Phi$. They proved this criterion by reducing to the case of a finite-dimensional $TB_r$-module $M$, and then arguing by induction on the dimension of $M$. The induction argument was combinatorial in nature, and relied on a well-chosen ordering for the positive roots in $\Phi$. Later, Friedlander and Parshall \cite{Friedlander:1986b,Friedlander:1987} deduced a geometric proof of the injectivity criterion in the special case $r=1$ by studying the support variety $\calV_{G_1}(M)$, a conical subvariety of the affine space $\calN$ of nilpotent elements in $\g := \Lie(G)$. Their breakthrough was to provide a rank variety type interpretation for $\calV_{G_1}(M)$. In particular, they determined that $M|_{U_{\alpha,1}}$ is injective if and only if the corresponding root vector $f_\alpha \in \g$ is not an element of $\calV_{U_1}(M)$. Since $\calV_{U_1}(M)$ is naturally a $T$-space (by the assumption of the $T$-action on $M$), and since any $T$-stable subvariety of $\fraku$ must contain a root vector, this proved the criterion.

Now let $k$ be an algebraically closed field of characteristic $p \neq 2$, and let $\ell$ be an odd positive integer. If $\Phi$ has type $G_2$, then assume also that $p \neq 3$ and that $\ell$ is coprime to 3. Let $U_\zeta(\g)$ be the quantized enveloping algebra (Lusztig form) associated to $\g$, with parameter $q$ specialized to a primitive $\ell$-th root of unity $\zeta \in k$. Let $\uzg$ be the ``small quantum group,'' the Hopf algebraic kernel of the quantum Frobenius morphism $F_\zeta: U_\zeta(\g) \rightarrow \hy(G)$. In this paper we generalize the results of \cite{Cline:1985} and \cite{Friedlander:1986b,Friedlander:1987} to deduce criteria for detecting the injectivity of modules over $u_\zeta(\g)$ that admit compatible actions by the quantum torus $U_\zeta^0 \subset U_\zeta(\g)$. Our first main result (Theorem \ref{theorem:injectivitycriterion}) is that a finite-dimensional rational $\Uzo \uzg$-module $M$ is injective if and only if the restriction $M|_{u_\zeta(f_\alpha)}$ is injective for each root subalgebra $u_\zeta(f_\alpha) \subset \uzg$ ($\alpha \in \Phi$). If $p > 0$, then the injectivity of a module $M$ for the higher Frobenius--Lusztig kernels $\Uzgr$ of $\Uzg$ can also be detected by the restriction of $M$ to the higher Frobenius--Lusztig kernels $U_\zeta(U_{\alpha,r})$ of the root subalgebras. The problem of identifying sublagebras of a given Hopf algebra that detect injectivity in the above manner is a topic of historical and current interest \cite{Quillen:1971,Friedlander:2007,Cline:1985,Towers:2009}.

Next, we study cohomological support varieties for $\uzg$ and its Borel subalgebras $\uzb$ and $\uzbp$. It is known (in characteristic zero) by results of Ginzburg and Kumar \cite{Ginzburg:1993} and (in positive characteristics) by results of the author \cite{Drupieski:2009} that the $\uzg$-support variety $\calV_{\uzg}(M)$ of a $\uzg$-module $M$ identifies naturally with a conical subvariety of the nullcone $\calN \subset \g$, and that the $\uzb$-support variety $\calV_{\uzb}(M)$ identifies naturally with a conical subvariety of $\fraku:=\Lie(U)$. The structure of $\Vg(M)$ is known, for example, if $M = H_\zeta^0(\lambda)$ is an induced module, by work of Ostrik \cite{Ostrik:1998} and of Bendel, Nakano, Parshall and Pillen \cite{Bendel:2009}, and if $M = L_\zeta(\lambda)$ is a simple module, by work of the author, Nakano and Parshall \cite{Drupieski:2009a}. Beyond these special cases few explicit calculations are known. A primary obstruction to computing $\Vg(M)$ for arbitrary $M$ is the lack of a theory of rank varieties for $\uzg$-modules, or, more generally, a theory of rank varieties for arbitrary finite-dimensional Hopf algebras. Some partial progress in this direction has been made; see, for example, \cite{Benson:2007,Pevtsova:2006,Towers:2009}.

In this paper we contribute to the development of a theory of rank varieties for the Borel subalebra $\uzb$ by proving a partial generalization of Friedlander and Parshall's result on the support varieties of $p$-unipotent restricted Lie algebras. Specifically, in Section \ref{section:Bsupportvarieties} we show that the restriction $M|_{u_\zeta(f_\alpha)}$ is injective if and only if the root vector $f_\alpha \in \g$ is not contained in $\calV_{\uzb}(M)$. This enables us in Section \ref{subsection:secondproof} to provide a second, geometric proof of the $r=0$ version of Theorem \ref{theorem:injectivitycriterion}. Finally, in Section \ref{subsection:Gsupportvarieties} we consider the case when $M$ is a $\Uzg$-module. Then $\Vg(M)$ is naturally a $G$-stable subvariety of the nullcone $\calN$. In this case, we use the structure of nilpotent orbits in $\calN$ to deduce that $M$ is injective for $\uzg$ if and only if the restriction $M|_{u_\zeta(f_{\alpha_h})}$ is injective for the root subalgebra corresponding to the highest long root $\alpha_h \in \Phi^+$.

It is our hope that partial rank variety results in Section \ref{section:Bsupportvarieties} could be extended to support varieties for the entire small quantum grouop $\uzg$. We also hope that rank variety results might help answer the question of naturality between support varieties over $\uzg$ and $\uzb$, that is, given a finite-dimensional $\uzg$-module $M$, whether the intersection $\Vg(M) \cap \fraku$ is equal to the support variety $\Vb(M)$. This is an issue which is easily settled in the affirmative for Frobenius kernels of algebraic groups using rank varieties, but which remains open for small quantum groups.

\section{Preliminaries}

\subsection{Quantized enveloping algebras} \label{subsection:QEAs}

Let $\Phi$ be a finite, indecomposable root system. Fix a set of simple roots $\Pi \subset \Phi$, and let $\Phi^+$, $\Phi^-$ be the corresponding sets of positive and negative roots in $\Phi$. Let $\Z\Phi$ denote the root lattice of $\Phi$, $X$ the weight lattice of $\Phi$, and $X^+ \subset X$ the subset of dominant weights. Write $X_{p^r \ell} = \set{\lambda \in X^+: (\lambda,\beta^\vee) < p^r \ell \;\; \forall \, \beta \in \Pi}$ for the set of $p^r\ell$-restricted dominant weights. Let $W$ denote the Weyl group of $\Phi$. It is generated by the simple reflections $\set{s_\beta:\beta \in \Pi}$. The root system $\Phi$ spans a real vector space $\mathbb{E}$, possessing a positive definite, $W$-invariant inner product $(\cdot,\cdot)$, normalized so that $(\alpha,\alpha) = 2$ if $\alpha \in \Phi$ is a short root.

Let $k$ be a field of characteristic $p \neq 2$ (and $p \neq 3$ if $\Phi$ has type $G_2$). Let $q$ be an indeterminate over $k$. Then the quantized enveloping algebra $U_q = U_q(\g)$ is the $k(q)$-algebra defined by generators $\set{E_\alpha,F_\alpha,K_\alpha,K_\alpha^{-1}:\alpha \in \Pi}$ and relations as in \cite[Chapter 4]{Jantzen:1996}. Multiplication in $U_q$ induces vector space isomorphisms $U_q \cong U_q^- \otimes U_q^0 \otimes U_q^+ \cong U_q^+ \otimes U_q^0 \otimes U_q^-$ (triangular decomposition), where $U_q^-$ (resp.\ $U_q^0,U_q^+$) is the $k(q)$-subalgebra of $U_q$ generated by $\set{F_\alpha : \alpha \in \Pi}$ (resp.\ $\set{K_\alpha,K_\alpha^{-1}:\alpha \in \Pi}$, $\set{E_\alpha: \alpha \in \Pi}$). The algebra $U_q$ is a Hopf algebra, with Hopf algebra structure maps defined in \cite[\S4.8]{Jantzen:1996}. The algebras $U_q^+U_q^0$ and $U_q^-U_q^0$ are Hopf subalgebras of $U_q$, but $U_q^+$ and $U_q^-$ are not.

Let $\ell \in \N$ be an odd positive integer, with $\ell$ coprime to $3$ if $\Phi$ has type $G_2$. Fix a primitive $\ell$-th root of unity $\zeta \in k$. Set $\upA = k[q,q^{-1}]$. Then $k$ is naturally an $\upA$-module under the specialization $q \mapsto \zeta$. For $n \in \N$, the divided powers $E_\alpha^{(n)}$, $F_\alpha^{(n)}$ are defined in \cite[\S8.6]{Jantzen:1996}. Let $U_\upA$ be the $\upA$-subalgebra of $U_q$ generated by
\[
\set{E_\alpha^{(n)},F_\alpha^{(n)},K_\alpha,K_\alpha^{-1}:\alpha \in \Pi, n \in \N}.
\]
Set $U_k = U_\upA \otimes_\upA k$. Then the algebra $U_\zeta = U_\zeta(\g)$ is defined as the quotient of $U_k$ by the two-sided ideal $\subgrp{K_\alpha^\ell \otimes 1 - 1 \otimes 1 : \alpha \in \Pi}$. By abuse of notation, we denote the generators $E_\alpha,F_\alpha,K_\alpha,K_\alpha^{-1} \in U_q$ as well as their images in $U_\zeta$ by the same symbols. The algebra $U_\zeta$ inherits a triangular decomposition from $U_q$.

Fix a choice of reduced expression $w_0 = s_{\beta_1} s_{\beta_2} \cdots s_{\beta_N}$ ($\beta_i \in \Pi$) for the longest word $w_0 \in W$. So $N = \abs{\Phi^+}$. For $1 \leq i \leq N$, set $w_i = s_{\beta_1} \cdots s_{\beta_{i-1}}$ (so $w_1 = 1$), and set $\gamma_i = w_i(\beta_i)$. Then $\set{\gamma_1,\ldots,\gamma_N}$ is a convex ordering of the positive roots in $\Phi^+$, that is, if $\gamma_i + \gamma_j = \gamma_l$ with $i < j$, then $i < l < j$. Now for $\alpha \in \Pi$, let $T_\alpha$ denote Lusztig's automorphism of $U_q$, as defined in \cite[Chapter 8]{Jantzen:1996}. Since the $T_\alpha$ satisfy the braid group relations for $W$, there exists for each $w \in W$ a well-defined automorphism $T_w$ of $U_q$. For each $1 \leq i \leq N$, define the root vetors $E_{\gamma_i} = T_{w_i}(E_{\beta_i})$ and $F_{\gamma_i} = T_{w_i}(F_{\beta_i})$. Then the collection of monomials $\set{E_{\gamma_1}^{a_1} \cdots E_{\gamma_N}^{a_N}: a_i \in \N}$ (resp.\ $\set{F_{\gamma_1}^{a_1} \cdots F_{\gamma_N}^{a_N}: a_i \in \N}$) forms a PBW-type basis for $U_q^+$ (resp.\ $U_q^-$). Replacing the root vectors by their divided powers, we obtain $\upA$-bases
\begin{align}
& \set{E_{\gamma_1}^{(a_1)} \cdots E_{\gamma_N}^{(a_N)}: a_i \in \N}
\qquad \text{and} \label{eq:Edivpowerbasis} \\
& \set{F_{\gamma_1}^{(a_1)} \cdots F_{\gamma_N}^{(a_N)}: a_i \in \N} \label{eq:divpowerbases}
\end{align}
for the algebras $U_\upA^+$ and $U_\upA^-$, respectively, which project onto $k$-bases for $U_\zeta^+$ and $U_\zeta^-$.

The following lemma, describing ``commutation'' relations between the root vectors in $U_q$, generalizes an observation of Levendorskii and Soibelman \cite{Levendorskii:1991}.

\begin{lemma} \label{lemma:commutationrelations}
Let $S \subset \upA = k[q,q^{-1}]$ be the multiplicatively closed set generated by
\begin{align*}
&\set{1} & \text{if $\Phi$ has type $ADE$}, \\
&\set{q^2-q^{-2}} & \text{if $\Phi$ has type $BCF$}, \\
&\set{q^2-q^{-2}, q^3-q^{-3}} & \text{if $\Phi$ has type $G_2$}.
\end{align*}
Set $\scrA = S^{-1} \upA$, and let $1 \leq i < j \leq N$. Then in $U_q$ we have
\begin{enumalph}
\item $E_{\gamma_i}E_{\gamma_j} = q^{(\gamma_i,\gamma_j)}E_{\gamma_j} E_{\gamma_i} + (*)$, where $(*)$ is an $\scrA$-linear combination of monomials $E_{\gamma_1}^{a_1} \cdots E_{\gamma_N}^{a_N}$ with $a_s = 0$ unless $i < s < j$.
\item $F_{\gamma_i}F_{\gamma_j} = q^{(\gamma_i,\gamma_j)} F_{\gamma_j} F_{\gamma_i} + (*)$, where $(*)$ is an $\scrA$-linear combination of monomials $F_{\gamma_1}^{a_1} \cdots F_{\gamma_N}^{a_N}$ with $a_s = 0$ unless $i < s < j$.
\end{enumalph}
\end{lemma}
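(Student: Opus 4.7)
The plan is to reduce the statement to a computation in rank 2 subsystems and then explicitly verify which denominators appear. I will sketch part (a); part (b) is entirely analogous, and can alternatively be deduced from (a) by applying the Chevalley antiautomorphism of $U_q$ that swaps $E_\alpha \leftrightarrow F_\alpha$.

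\textbf{Step 1: Existence over $k(q)$.} The existence of a relation of the stated shape with coefficients in $k(q)$ is the original Levendorski\u{\i}--Soibelman observation, which follows by induction on $j-i$ from the braid relations satisfied by the $T_w$ together with the quantum Serre relations. The case $j = i+1$ is essentially built into the definition of the PBW basis associated to the chosen reduced expression for $w_0$, and the general case is obtained by iteratively moving $E_{\gamma_j}$ past $E_{\gamma_i}$ through the intermediate root vectors.

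\textbf{Step 2: Reduction to rank 2.} The heart of the argument is to pin down the coefficient ring. For this I would use Lusztig's automorphisms to reduce to a computation inside a rank 2 subsystem. More precisely, after applying an appropriate $T_w$ (which is an algebra automorphism of $U_q$ and therefore does not introduce new denominators), one may assume $\gamma_i$ and $\gamma_j$ both lie in the rank 2 root subsystem $\Phi_0 := (\Z \gamma_i + \Z \gamma_j) \cap \Phi$, and indeed that the computation of $E_{\gamma_i} E_{\gamma_j} - q^{(\gamma_i,\gamma_j)} E_{\gamma_j} E_{\gamma_i}$ takes place inside the subalgebra of $U_q$ generated by the root vectors for $\Phi_0$. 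Since $\Phi$ is indecomposable of one of the types listed, $\Phi_0$ is of type $A_1 \times A_1$, $A_2$, $B_2$, or $G_2$, matching the three cases in the definition of $S$.

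\textbf{Step 3: Rank 2 calculations.} For $\Phi_0$ of type $A_1 \times A_1$ or $A_2$ the classical formulas (cf.\ \cite[\S8.14]{Jantzen:1996}) give coefficients already in $\upA$, consistent with $S = \set{1}$ for type $ADE$. For $\Phi_0$ of type $B_2$, a direct computation starting from $E_{\alpha+\beta} = T_\alpha(E_\beta)$ (and the corresponding expression for $E_{2\alpha+\beta}$) introduces denominators built from the quantum integer $[2]_q = (q^2 - q^{-2})/(q-q^{-1})$; after clearing the factor $q - q^{-1} \in \upA$, the remaining denominator $q^2 - q^{-2}$ is exactly the generator of $S$ in types $BCF$. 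For $\Phi_0$ of type $G_2$ the analogous but longer calculation additionally produces a factor of $[3]_q$, accounting for the extra generator $q^3 - q^{-3}$ of $S$ in that case.

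\textbf{Main obstacle.} The main difficulty is the bookkeeping in type $G_2$: there are six positive roots and several pairs to analyze, and the explicit formulas for $T_\alpha$ applied to higher root vectors are genuinely messy. The substantive content of the lemma is the assertion that no denominators beyond those in $S$ appear, so the proof really does reduce to carefully tracking coefficients through these rank 2 computations.
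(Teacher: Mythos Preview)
Your approach matches the paper's: reduce (b) to (a) via an involution interchanging $E$'s and $F$'s, verify (a) directly in rank two, and then deduce the general statement from the rank-two case by the De~Concini--Kac style reduction. The only cosmetic differences are that the paper invokes the algebra \emph{automorphism} $\omega$ (rather than an antiautomorphism) for the passage from (a) to (b), and that for the rank-two verification it simply appeals to computer calculation (GAP's QuaGroup package, together with Lusztig's tables) rather than sketching the $B_2$ and $G_2$ computations by hand.
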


\begin{proof}
Parts (a) and (b) are equivalent: apply the algebra automorphism $\omega$ defined below in \S \ref{subsection:FLkernels}. When $\g$ has rank two, part (a) can be verified by direct calculation, using, for example, the QuaGroup package of the computer program GAP (cf.\ also \cite{Lusztig:1990a}). From the rank two case the result is deduced for arbitrary $\g$ by the arguments in the proof of \cite[Theorem 9.3(iv)]{De-Concini:1993}.
\end{proof}

\begin{remarks}
(1) Versions of Lemma \ref{lemma:commutationrelations} appear in the literature \cite{De-Concini:1990,De-Concini:1993} with our choice for the ring $\scrA$ replaced by $\Q[q,q^{-1}]$. Direct calculation in types $B_2$ and $G_2$ shows that such a formulation is incorrect and that the extra denominators are necessary whenever $\Phi$ has two root lengths. Since the generators of $S$ do not map to zero under the specialization $q \mapsto \zeta$, we get that relations of Lemma \ref{lemma:commutationrelations} also hold for $\Uz$.

(2) According to \cite[Theorem 2.4]{Xi:1999}, any permutation of the ordering of the root vectors in \eqref{eq:Edivpowerbasis} also yields an $\upA$-basis for $U_q^+$, and similarly for \eqref{eq:divpowerbases} and $U_q^-$.
\end{remarks}

\subsection{Frobenius--Lusztig kernels} \label{subsection:FLkernels}

The elements $\set{E_\alpha,F_\alpha,K_\alpha: \alpha \in \Pi}$ of $\Uz$ generate a finite-dimensional Hopf subalgebra of $\Uz$, called the small quantum group and denoted by $u_\zeta = \uzg$. It is a normal Hopf subalgebra of $\Uz$, and the Hopf algebraic quotient $\Uz//\uz$ is isomorphic to $\hy(G)$, the hyperalgebra of the semisimple, simply-connected algebraic group $G$ over $k$ with root system $\Phi$. The quotient map $F_\zeta: \Uz \rightarrow \hy(G)$ was constructed by Lusztig \cite[\S8.10--8.16]{Lusztig:1990a}, and is called the quantum Frobenius morphism. For this reason, the algebra $\uz$ is also called the Frobenius--Lusztig kernel of $\Uzg$. The subalgebra $\uzo:= \uzg \cap \Uzo$ is isomorphic to the group ring $k(\Z_\ell)^{\rank(\g)}$. Since we assumed $\zeta$ to be a primitive $\ell$-th root of unity in the field $k$ of characteristic $p$, we must have $p \nmid \ell$. Then $\uzo$ is a semisimple algebra.

Fix $r \in \N$, and suppose $p> 0$. Define $U_\zeta(G_r)$ to be the Hopf-subalgebra of $\Uz$ generated by
\begin{equation} \label{eq:kernelgenerators}
\set{E_\alpha,E_\alpha^{(p^i \ell)}, F_\alpha, F_\alpha^{(p^i \ell)}, K_\alpha : \alpha \in \Pi, 0 \leq i \leq r-1} \subset \Uzg.
\end{equation}
Then $U_\zeta(G_r)$ is a finite-dimensional subalgebra of $\Uz$, and $F_\zeta(U_\zeta(G_r)) = \hy(G_r)$, the hyperalgebra of the $r$-th Frobenius kernel of $G$. We call $U_\zeta(G_r)$ the $r$-th Frobenius--Lusztig kernel of $\Uz$, and collectively we refer to the $U_\zeta(G_r)$ with $r \geq 1$ as the higher Frobenius--Lusztig kernels of $\Uz$. If $r=0$, then $U_\zeta(G_r)$ reduces to $u_\zeta$, the small quantum group. The higher Frobenius--Lusztig kernels of $\Uz$ are defined only if $p = \chr(k) > 0$. Indeed, if $\chr(k) = 0$, then the algebra generated by the set \eqref{eq:kernelgenerators} is all of $\Uz$. Since most of the results presented in this paper are characteristic-independent, we have adopted the position of stating results whenever possible for the higher Frobenius--Lusztig kernels $\Uzgr$ of $\Uz$, with the understanding that the reader should take $r=0$ whenever $p = 0$.

We will be concerned with certain distinguished subalgebras of $\Uz$ corresponding to the subgroup schemes $U_r$, $B_r$, and $TG_r$ of $G$. Define $U_\zeta(B) = U_\zeta^0 U_\zeta^-$, $\Uzur = U_\zeta^- \cap \Uzgr$, $\Uzbr = U_\zeta(B) \cap \Uzgr$, and $\Uztgr = U_\zeta^0 \Uzgr$. Then, for example, the algebra $\Uzur$ admits a basis consisting of all monomials \eqref{eq:divpowerbases} with $0 \leq a_i < p^r \ell$. If $r=0$, then write $\uzb$, $\uzu$, and $\Uzo \uzg$ for $\Uzbr$, $\Uzur$, and $\Uztgr$, respectively. We will also be concerned with certain subalgebras of $\Uzur$ generated by root vectors. For each $\alpha \in \Phi^+$, define $U_\zeta(U_{\alpha,r})$ to be the subalgebra of $\Uzur$ generated by the elements
\[
\set{ F_\alpha, F_\alpha^{(\ell)},F_\alpha^{(p\ell)},\ldots,F_\alpha^{(p^{r-1}\ell)} }.
\]
Then $U_\zeta(U_{\alpha,r})$ admits a basis consisting of all divided powers $F_\alpha^{(n)}$ with $0 \leq n < p^r \ell$. As an algebra, $U_\zeta(U_{\alpha,r})$ is isomorphic to the truncated polynomial ring
\[
k[Y,X_0,X_1,\ldots,X_{r-1}]/(Y^\ell,X_0^p,X_1^p,\ldots,X_{r-1}^p).
\]
The isomorphism maps $F_\alpha \mapsto Y$ and $F_\alpha^{(p^i \ell)} \mapsto X_i$. If $r=0$, then $U_\zeta(U_{\alpha,r})$ reduces to the subalgebra $u_\zeta(f_\alpha)$ of $\uz$ generated by $F_\alpha$. Finally, for each $1 \leq m \leq N$, define $\Uzurm$ to be the subspace of $\Uzur$ spanned by the monomial basis vectors \eqref{eq:divpowerbases} with $0 \leq a_i < p^r\ell$ and $a_i = 0$ for $i > m$. It follows from Lemma \ref{lemma:commutationrelations} that $\Uzurm$ is a normal subalgebra of $U_\zeta(U_{r,m+1})$. Of course, $U_\zeta(U_{r,N}) = \Uzur$.

There exists an involutory $k(q)$-algebra automorphism $\omega$ of $U_q$ defined by
\begin{equation} \label{eq:omega}
\omega(E_\alpha) = F_\alpha, \quad \omega(F_\alpha) = E_\alpha, \quad \omega(K_\alpha) = K_\alpha^{-1} \quad (\alpha \in \Pi).
\end{equation}
For each $\gamma \in \Phi^+$, $\omega(E_\gamma) = \pm q^a F_\gamma$ for some $a \in \Z$ (depending on $\gamma$) \cite[8.14(9)]{Jantzen:1996}. The automorphism $\omega$ descends to an automorphism of $\Uz$ and of its Frobenius--Lusztig kernels. Now define ``positive'' versions of the distinguished subalgebras of $\Uz$ by setting $U_\zeta(U_r^+) = \omega(\Uzur)$, $U_\zeta(B_r^+) = \omega(\Uzbr)$, $U_\zeta(U_{\alpha,r}^+) = \omega(U_\zeta(U_{\alpha,r})$, and so on. If $r=0$, then denote $U_\zeta(U_{\alpha,r}^+)$ by $u_\zeta(e_\alpha)$. Collectively, we denote the collection of all (positive and negative) root subalgebras by writing $U_\zeta(U_{\alpha,r})$, $\alpha \in \Phi$ (i.e., by not distinguishing between $\alpha$ being a positive or an arbitrary root).

\subsection{Hopf algebra actions on cohomology} \label{subsection:Hopfalgebraactions}

Let $H$ be a Hopf algebra with comultiplication $\Delta$ and antipode $S$. Given $h \in H$, write $\Delta(h) = \sum h^{(1)} \otimes h^{(2)}$ (Sweedler notation). Later we may omit the summation symbol, and just write $\Delta(h) = h^{(1)} \otimes h^{(2)}$. The left and right adjoint actions of $H$ on itself are defined for $h,u \in H$ by
\begin{equation*}
\Ad_\ell(h)(u) = \sum h^{(1)} w S(h^{(2)}) \qquad \text{and} \qquad \Ad_r(h)(u) = \sum S(h^{(1)}) u h^{(2)}.
\end{equation*}
Now let $V$ be an $H$-module, and let $A$ be an $\Ad_r$-stable subalgebra of $H$. Let $\upB_\bullet(A)$ denote the left bar resolution for $A$, defined in \cite[X.2]{Mac-Lane:1995}. We denote a typical element of $\upB_n(A)$ by $a[a_1,\ldots,a_n]$. The right $\Ad_r$-action of $H$ on $A$ extends diagonally to a right action of $H$ on $\upB_\bullet(A)$. The Hopf algebra axioms for $H$ guarantee that $\upB_\bullet(A)$ is thus a complex of right $H$-modules.

The cohomology groups $\opH^\bullet(A,V)$ can be computed as the homology of the cochain complex $C^\bullet(A,V):=\Hom_A(\upB_\bullet(A),V)$. Define an action of $H$ on $C^\bullet(A,V)$ by setting, for $h \in H$, $u \in \upB_n(A)$, and $f \in C^n(A,V)$,
\begin{equation*}
(h.f)(u) = \sum h^{(1)}.f(u \cdot h^{(2)}).
\end{equation*}
This definition makes sense because $V$ was assumed to be an $H$-module. Using the Hopf algebra axioms for $H$, one can easily show that this action makes $C^\bullet(A,V)$ a complex of left $H$-modules, hence that there exists an induced action of $H$ on $\opH^\bullet(A,V)$. We call the $H$-action thus obtained the adjoint action of $H$ on $\opH^\bullet(A,V)$.

\begin{remark}
Suppose that $A$ is an $\Ad_\ell$-stable subalgebra of $H$. Then $\upB_\bullet(A)$ is naturally a complex of left $H$-modules, and $\Hom_k(\upB_\bullet(A),V)$ is a left $H$-module under the ``usual'' diagonal action of $H$, defined in \eqref{eq:Hopfdiagonalaction}. This is, however, the wrong approach to take when defining an action of $H$ on $\opH^\bullet(A,V)$, because it is not clear in general that the usual diagonal action of $H$ on $\Hom_k(\upB_\bullet(A,V)$ stabilizes the subspace of $A$-homomorphisms, nor that it commutes with the differential of the complex $C^\bullet(A,V)$. 
\end{remark}

For future reference it will be useful to note:

\begin{lemma} \textup{\cite[Corollary 3.14]{Drupieski:2009}}
The small quantum group $\uzg$ is stable under the right adjoint action of $\Uz$ on itself. The Borel subalgebra $\uzb$ (resp.\ $\uzbp$) is stable under the right adjoint action of $U_\zeta(B)$ (resp.\ $U_\zeta(B^+)$) on itself.
\end{lemma}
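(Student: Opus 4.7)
The plan is to establish $\Ad_r$-stability of $\uzg$ inside $\Uz$ by reducing the claim to verification on a generating set for $\Uz$. The right adjoint action is anti-multiplicative: $\Ad_r(h_1 h_2)(u) = \Ad_r(h_2)(\Ad_r(h_1)(u))$, a direct consequence of $\Delta$ being an algebra map and $S$ being an anti-algebra map. Hence it suffices to show that $\Ad_r(x)$ stabilizes $\uzg$ as $x$ ranges over algebra generators of $\Uz$, which by the discussion in \S\ref{subsection:FLkernels} may be taken to be $\{E_\alpha, F_\alpha, K_\alpha^{\pm 1}:\alpha \in \Pi\}$ together with the divided powers $\{E_\alpha^{(p^i\ell)}, F_\alpha^{(p^i\ell)}:\alpha \in \Pi,\, i \geq 0\}$ (with only $i=0$ needed when $\chr(k)=0$).

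For $x \in \uzg$ the claim is immediate: $\Ad_r(x)(u) = S(x^{(1)})\, u\, x^{(2)}$ is manifestly a product of elements of the Hopf subalgebra $\uzg$. This disposes of the generators $E_\alpha$, $F_\alpha$, and $K_\alpha^{\pm 1}$ at once. The main obstacle is the case of the divided power generators, which do not themselves lie in $\uzg$. For these one substitutes the standard coproduct formula
\[
\Delta(E_\alpha^{(N)}) = \sum_{r+s=N} q_\alpha^{rs}\, E_\alpha^{(r)} K_\alpha^s \otimes E_\alpha^{(s)},
\]
together with the corresponding formula for $S(E_\alpha^{(N)})$ (cf.\ \cite[Chapter 4]{Jantzen:1996}), expands $\Ad_r(x)(u)$ with $N = p^i\ell$ and $u$ a generator of $\uzg$, and then uses the defining relation $K_\alpha^\ell = 1$ in $\Uz$ along with the commutation relations of Lemma \ref{lemma:commutationrelations} to consolidate the intermediate divided powers $E_\alpha^{(r)}$ (for $0 < r < N$) with their complementary partners $E_\alpha^{(s)}$ into products of elements of $\uzg$. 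The automorphism $\omega$ of \eqref{eq:omega} then transports the analogous identities to the $F_\alpha^{(p^i\ell)}$ case.

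A structurally cleaner alternative that I would in fact prefer is to appeal directly to the normality of $\uzg$ as a Hopf subalgebra of $\Uz$. The Lusztig quantum Frobenius morphism $F_\zeta: \Uz \to \hy(G)$ is a Hopf algebra surjection whose kernel equals $\Uz \cdot \uzg^+ = \uzg^+ \cdot \Uz$, so $\uzg$ is normal in $\Uz$ in the Hopf algebraic sense. It is a standard fact that a normal Hopf subalgebra is stable under both the left and right adjoint actions of the ambient Hopf algebra, which delivers the desired conclusion without any explicit calculation. The assertions for $\uzb \subseteq U_\zeta(B)$ and $\uzbp \subseteq U_\zeta(B^+)$ are obtained in parallel: restricting $F_\zeta$ to the respective Borel subalgebras realizes $\uzb$ and $\uzbp$ as the kernels of Hopf surjections onto $\hy(B)$ and $\hy(B^+)$, hence as normal Hopf subalgebras of the corresponding ambient algebras.
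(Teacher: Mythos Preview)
The paper does not supply a proof of this lemma; it simply records the statement with a citation to \cite[Corollary 3.14]{Drupieski:2009}. So there is no argument in the paper to compare against, and your proposal should be judged on its own.

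Your second approach is the sound one and is essentially how the cited reference proceeds: once one knows that $\uzg$ arises as the Hopf-algebraic kernel of the surjection $F_\zeta$ (equivalently, as the subalgebra of $F_\zeta$-coinvariants, with $\Uz$ faithfully flat over $\uzg$), adjoint stability follows from general Hopf algebra theory. You should be explicit that this uses bijectivity of the antipode and the identification of $\uzg$ with the coinvariants, both of which hold here; the bare equality $\Uz\cdot\uzg^+ = \uzg^+\cdot\Uz$ does not by itself force $\Ad_r$-stability without such input. The Borel cases follow by restricting $F_\zeta$, exactly as you say.

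Your first approach is workable in spirit but your description of the computation is off. In the expansion of $\Ad_r(E_\alpha^{(\ell)})(u)$ via $\Delta(E_\alpha^{(\ell)}) = \sum_{r+s=\ell} q_\alpha^{rs} E_\alpha^{(r)}K_\alpha^s \otimes E_\alpha^{(s)}$, the ``intermediate'' terms with $0<r<\ell$ already lie in $\uzg$, since $E_\alpha^{(r)}$ is a scalar multiple of $E_\alpha^r \in \uzg$ for $r<\ell$; there is nothing to ``consolidate'' there. The genuine content is in the boundary terms $r=0$ and $r=\ell$, which (after using $K_\alpha^\ell = 1$ and $\ell$ odd) combine to the commutator $uE_\alpha^{(\ell)} - E_\alpha^{(\ell)}u$, and one must check that $[u,E_\alpha^{(\ell)}] \in \uzg$ for each generator $u$ of $\uzg$. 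That verification is straightforward but is not the step you identified. Lemma~\ref{lemma:commutationrelations} is also not the right tool here, as it concerns relations among root vectors of the same sign rather than commutators of generators with $\ell$-th divided powers.
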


\section{Representation theory}

\subsection{Ordinary representation theory} \label{subsection:ordinaryreptheory}

Denote the algebra $\Uzurm$ defined in \S \ref{subsection:FLkernels} by $A_m$. Using Lemma \ref{lemma:commutationrelations}, one can show that for any $A_m$-module $M$, the space of invariants $M^{A_m}$ is non-zero. This implies that, up to isomorphism, there exists a unique irreducible (left or right) $A_m$-module, namely, the trivial module $k$. The space of invariants $(A_m)^{A_m}$ for the (left or right) regular action of $A_m$ on itself is one-dimensional, spanned by the vector
\begin{equation} \label{eq:integral}
\textstyle \int_m:=F_{\gamma_1}^{(p^r\ell - 1)} \cdots F_{\gamma_m}^{(p^r \ell - 1)}.
\end{equation}
(The notation in \eqref{eq:integral} is meant to remind the reader of the two-sided integral in the finite-dimensional Hopf algebra $\hy(U_r)$.) Then the regular module $A_m$ is indecomposable, hence it is the (left and right) projective cover for the trivial module $k$. Trivially, the dual of every irreducible left (resp.\ right) $A_m$-module is an irreducible right (resp.\ left) $A_m$-module, so the (left or right) regular module $A_m$ is also injective \cite[Theorem 58.6]{Curtis:2006}. Then the regular module $A_m$ is also the (left and right) injective envelope of the trivial module $k$. From this discussion we conclude:

\begin{lemma} \label{lemma:Amfree}
Retain the notation of the previous paragraph.
\begin{enumalph}
\item A finite-dimensional $A_m$-module is projective iff it is injective iff it is free.
\item There exists an isomorphism of left $A_m$-modules $A_m \cong (A_m)^* := \Hom_k(A_m,k)$, where the left action of $A_m$ on $(A_m)^*$ is induced by the right multiplication of $A_m$ on itself.
\end{enumalph}
\end{lemma}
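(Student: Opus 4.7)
The plan is to deduce Lemma~\ref{lemma:Amfree} directly from the structural facts recorded in the paragraph immediately above its statement: $k$ is the unique irreducible $A_m$-module, the regular module $A_m$ is indecomposable, and $A_m$ is simultaneously the projective cover and the injective envelope of $k$ (on both sides). In other words, $A_m$ is a local, self-injective algebra, and both parts of the lemma should fall out as standard consequences.

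For part~(a), I would argue by Krull--Schmidt. Every finite-dimensional projective $A_m$-module decomposes as a direct sum of indecomposable projectives; each indecomposable projective is the projective cover of some irreducible module, but $k$ is the only irreducible module, with projective cover $A_m$. Hence every finite-dimensional projective $A_m$-module is a direct sum of copies of $A_m$, that is, is free. The same argument, with ``projective cover'' replaced by ``injective envelope'' and using that $A_m$ is also the injective envelope of $k$, shows that every finite-dimensional injective $A_m$-module is free. Since free modules are both projective and injective, the three conditions are equivalent.

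For part~(b), I would equip $A_m^* = \Hom_k(A_m,k)$ with the left $A_m$-action $(a \cdot f)(x) = f(xa)$ induced by right multiplication on $A_m$. Since the right regular module $A_m$ is projective (free of rank one) as a right $A_m$-module, its $k$-linear dual $A_m^*$ is injective as a left $A_m$-module; this is the standard observation that $\Hom_k(-,k)$ is exact and interchanges projectives with injectives while swapping the module-action side over a finite-dimensional $k$-algebra. By part~(a), $A_m^*$ is then free as a left $A_m$-module, and a dimension count forces $A_m^* \cong A_m$. No step presents a real obstacle: the entire argument is driven by the fact that $A_m$ is local and self-injective, which is already in hand. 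The only point worth checking carefully is the left/right transfer of injectivity in part~(b), but this is routine.
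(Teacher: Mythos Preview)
Your argument is correct and is exactly the standard fleshing-out of what the paper leaves implicit: the paper gives no separate proof of this lemma, simply writing ``From this discussion we conclude,'' having already established that $A_m$ is local with $A_m$ the projective cover and injective envelope of $k$ on both sides (citing \cite[Theorem 58.6]{Curtis:2006}). Your Krull--Schmidt argument for (a) and the injectivity-of-$A_m^*$ plus dimension-count argument for (b) are precisely the routine verifications the paper is tacitly invoking.
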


The next lemma is similar to \cite[II.9.4]{Jantzen:2003}.

\begin{lemma} \label{lemma:borelunipotentequivalence}
Let $M$ be a finite-dimensional $\Uzbr$-module. The following are equivalent:
\begin{enumerate}
\item $M$ is injective for $\Uzbr$.
\item $M$ is injective for $\Uzur$.
\item $M$ is projective for $\Uzur$.
\item $M$ is projective for $\Uzbr$.
\end{enumerate}
\end{lemma}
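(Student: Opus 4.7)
The plan mirrors the classical analog \cite[II.9.4]{Jantzen:2003}, organized around the three equivalences (2)$\Leftrightarrow$(3), (1)$\Leftrightarrow$(4), and (1)$\Leftrightarrow$(2). The first is immediate from Lemma \ref{lemma:Amfree}(a) applied to $A_N = \Uzur$.

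For (1)$\Leftrightarrow$(4), the plan is to show that $\Uzbr$ is itself a finite-dimensional Frobenius algebra. The cleanest route is to verify from the comultiplication formulas on the generators of \eqref{eq:kernelgenerators} that $\Uzbr = U_\zeta(B)\cap\Uzgr$ is a Hopf subalgebra of $\Uz$, so that Larson--Sweedler applies and finite-dimensional projective and injective $\Uzbr$-modules coincide. Alternatively, one may adapt the argument preceding Lemma \ref{lemma:Amfree} by using the triangular decomposition $\Uzbr = \Uztr\cdot\Uzur$ together with Lemma \ref{lemma:commutationrelations} to exhibit a two-sided integral in $\Uzbr$ spanning a one-dimensional invariant subspace in the regular representation.

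For (1)$\Leftrightarrow$(2), the direction (1)$\Rightarrow$(2) follows from the PBW description, which exhibits $\Uzbr$ as a free $\Uzur$-module on both sides: restriction then takes injective $\Uzbr$-modules to injective $\Uzur$-modules. For the converse (2)$\Rightarrow$(1), I would appeal to a Hochschild--Serre-type spectral sequence
\[
E_2^{p,q}=\Ext^p_{\Uztr}\bigl(k,\Ext^q_{\Uzur}(N,M)\bigr) \Rightarrow \Ext^{p+q}_{\Uzbr}(N,M)
\]
valid for arbitrary finite-dimensional $\Uzbr$-modules $N$. The torus-like algebra $\Uztr$ is semisimple (its group algebra factor $\uzo$ has order coprime to $p$ since $p\nmid\ell$, and its distribution-algebra factor corresponds to the diagonalizable group scheme $T_r$, whose module category is the semisimple category of $X(T)/p^rX(T)$-graded vector spaces), so the spectral sequence collapses to its zeroth row. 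The hypothesis that $M$ is injective over $\Uzur$ makes the higher $\Ext_{\Uzur}$-groups vanish, forcing $\Ext^{>0}_{\Uzbr}(N,M) = 0$ for every $N$ and hence injectivity of $M$ over $\Uzbr$.

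The main obstacle is the rigorous construction of this spectral sequence. The standard Hopf-algebraic Hochschild--Serre setup requires $\Uzur$ to be a normal \emph{Hopf} subalgebra of $\Uzbr$, but $\Uzur$ is only a subalgebra stable under the left adjoint action of $\Uzbr$ (a consequence of Lemma \ref{lemma:commutationrelations}), not a Hopf subalgebra. I would circumvent this by constructing the spectral sequence directly from a suitable filtration of the bar resolution $\upB_\bullet(\Uzbr)$, exploiting the freeness of $\Uzbr$ over $\Uzur$ afforded by the triangular decomposition to identify the $E_2$ page in the form claimed above.
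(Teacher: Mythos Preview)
Your proposal is correct, and for (2)$\Leftrightarrow$(3) and (1)$\Leftrightarrow$(4) it matches the paper exactly. The difference lies in closing the cycle. You aim at (2)$\Rightarrow$(1) via a Hochschild--Serre spectral sequence whose $E_2$-page is controlled by the semisimplicity of $\Uztr$; the paper instead proves the equivalent implication (3)$\Rightarrow$(4) by a one-line weight-space argument: for any $\Uzbr$-module $W$, the $\Uztr$-weight decomposition
\[
\Hom_{\Uzur}(M,W) \;=\; \bigoplus_{\lambda \in X_{p^r\ell}} \Hom_{\Uzur}(M,W)_\lambda \;\cong\; \bigoplus_{\lambda \in X_{p^r\ell}} \Hom_{\Uzbr}(M\otimes\lambda,W)
\]
shows that exactness of $\Hom_{\Uzur}(M,-)$ forces exactness of each summand, in particular of $\Hom_{\Uzbr}(M,-)$. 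This is morally the same semisimplicity input you invoke, but it sidesteps entirely the obstacle you flag---the construction of the spectral sequence in the absence of a normal Hopf subalgebra---and needs nothing beyond the decomposition of $\Uztr$-modules into weight spaces. Your route would work once the filtration spectral sequence is set up, but the paper's argument is strictly more elementary and shorter.
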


\begin{proof}
Statements (1) and (4) are equivalent because $\Uzbr$ is a finite-dimensional Hopf algebra, cf.\ \cite{Larson:1969} and \cite[\S 61--62]{Curtis:2006}. Statements (2) and (3) are equivalent by the $m=N$ case of Lemma \ref{lemma:Amfree}. The implication (4) $\Rightarrow$ (3) follows from the fact that $\Uzbr$ is free as a left module over $\Uzur$. Now suppose that $M$ is projective for $\Uzur$, and let $W$ be an arbitrary $\Uzbr$-module. Considering the weight space decomposition of $\Hom_{\Uzur}(M,W)$ for the diagonal action of $U_\zeta(T_r) := \Uzbr \cap U_\zeta^0$, we get
\begin{equation} \label{eq:homweightdecomposition}
\Hom_{\Uzur}(M,W) = \bigoplus_{\lambda \in X_{p^r\ell}} \Hom_{\Uzur}(M,W)_\lambda \cong \bigoplus_{\lambda \in X_{p^r\ell}} \Hom_{\Uzbr}(M \otimes \lambda,W).
\end{equation}
Since $\Hom_{\Uzur}(M,-)$ is an exact functor, then so must be each $\Hom_{\Uzbr}(M \otimes \lambda,-)$. In particular, $\Hom_{\Uzbr}(M,-)$ must be exact. This proves the implication (3) $\Rightarrow$ (4).
\end{proof}

\begin{remark} \label{remark:indecomposableinjectives}
A complete description of the indecomposable injective (equivalently, projective) $\Uzbr$-modules is provided as follows. Write the identity $1 \in \Uztr$ as a sum of primitive orthogonal idempotents: $1 = \sum_{\lambda \in X_{p^r\ell}} e_\lambda$. In the notation of \cite[\S 1.2]{Andersen:1991}, $u.e_\lambda = \chi_\lambda(u). e_\lambda$ for all $u \in \Uztr$. Then $\set{k e_\lambda: \lambda \in X_{p^r\ell}}$ is a complete set of non-isomorphic simple $\Uztr$-modules. Now write $\Uzbr = \bigoplus_{\lambda \in X_{p^r\ell}} \Uzur e_\lambda$. Each $\Uzur e_\lambda$ is injective and projective as a module for $\Uzbr$ by Lemma \ref{lemma:borelunipotentequivalence}. The $\Uzbr$-module $\Uzur e_\lambda$ has one-dimensional socle of weight $\lambda - 2(p^r\ell - 1)\rho$, where $\rho = \frac{1}{2} \sum_{\alpha \in \Phi^+} \alpha$, and one-dimensional head of weight $\lambda$. Then, as a $\Uzbr$-module, $\Uzur e_\lambda$ is the projective cover of $\lambda$ and the injective hull of $\lambda - 2(p^r\ell - 1)\rho$. Replacing $B_r$ and $U_r$ by $B_r^+$ and $U_r^+$, one obtains a similar description for the indecomposable injective $U_\zeta(B_r^+)$-modules.
\end{remark}

\subsection{Rational representation theory} \label{subsection:rationalreptheory}

We presuppose that the reader is familiar with the rational (i.e., integrable) representation theory of quantized enveloping algebras, as developed in the work of Andersen, Polo and Wen \cite{Andersen:1992a,Andersen:1992,Andersen:1991}. Recall that a $\Uzo$-module $M$ is rational if and only if it admits a weight space decomposition of the form $M = \bigoplus_{\lambda \in X} M_\lambda$. Given $N \in \set{B_r,G_r,U_{r,m}}$, a $U_\zeta(TN) := U_\zeta^0 U_\zeta(N)$-module $M$ is rational if and only if it is rational as a $\Uzo$-module. By definition, all $U_\zeta(N)$-modules are rational. We summarize below certain results on the rational representation theory of the Frobenius--Lusztig kernels that will be needed later in \S \ref{subsection:reductionBorel}.

Write $A_m = \Uzurm$, and write $TA_m$ for the algebra $U_\zeta(TU_{r,m}) = \Uzo \Uzurm$.

\begin{lemma} \label{lemma:weightbasis}
Let $M$ be a finite-dimensional rational $TA_m$-module, and suppose that $M$ is free as a module for $A_m$. Then there exists an $A_m$-basis for $M$ consisting of weight vectors for $\Uzo$.
\end{lemma}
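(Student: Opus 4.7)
The plan is to exploit the fact that $A_m$ is a finite-dimensional local algebra, together with a Nakayama-style lifting argument compatible with the $\Uzo$-action.

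First I would record that $A_m$ is a finite-dimensional local algebra: by the discussion preceding Lemma \ref{lemma:Amfree}, $A_m$ has a unique irreducible module (the trivial module $k$), so $\rad(A_m)$ coincides with the augmentation ideal of $A_m$. Moreover, since the adjoint action of $\Uzo$ on $A_m \subset \Uz$ rescales each PBW monomial $F_{\gamma_1}^{(a_1)} \cdots F_{\gamma_m}^{(a_m)}$ by the character $-(a_1 \gamma_1 + \cdots + a_m \gamma_m)$, the radical $\rad(A_m)$ is spanned by $\Uzo$-weight vectors under the adjoint action. In particular $\rad(A_m) \cdot M$ is a $\Uzo$-stable subspace of $M$, so the quotient $\ol M := M/\rad(A_m)M$ inherits a rational $\Uzo$-module structure whose weight spaces are $\ol M_\lambda = M_\lambda / (\rad(A_m) M)_\lambda$.

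Next I would choose a $k$-basis $\ol v_1,\dots,\ol v_n$ of $\ol M$ consisting of weight vectors for $\Uzo$, which is possible by the weight space decomposition of $\ol M$. Because $M$ is rational and the weight space projection $M \twoheadrightarrow \ol M$ is $\Uzo$-equivariant, each $\ol v_i$ of weight $\lambda_i$ lifts to a weight vector $v_i \in M_{\lambda_i}$. Since $A_m$ is local with maximal ideal $\rad(A_m)$, Nakayama's lemma implies that $v_1,\dots,v_n$ generate $M$ as an $A_m$-module. Freeness of $M$ over $A_m$ gives $\dim_k M = n \cdot \dim_k A_m$, so a generating set of size $n$ must be a basis: the surjection $A_m^{\oplus n} \twoheadrightarrow M$ sending the $i$-th standard generator to $v_i$ is a surjection between finite-dimensional vector spaces of equal dimension, hence an isomorphism.

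The only mild subtlety, and the one I would be most careful about, is confirming that $\rad(A_m)$ is $\Uzo$-stable under the adjoint action; but this is immediate from the description of $\rad(A_m)$ as the span of the non-identity PBW monomials in the $F_{\gamma_i}^{(a_i)}$'s, each of which is an adjoint weight vector.
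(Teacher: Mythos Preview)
Your proof is correct and takes a genuinely different route from the paper's. The paper argues by induction on the $A_m$-rank of $M$: in the rank-one case it writes an $A_m$-generator $v$ as a sum of weight vectors $v_\lambda$, uses the integral $\int_m$ to locate some $v_\lambda$ with $\int_m v_\lambda \neq 0$, and then observes that the surjection $A_m \twoheadrightarrow A_m \cdot v_\lambda$ is nonzero on the one-dimensional socle, hence an isomorphism; higher rank is handled by peeling off rank-one free summands. Your Nakayama-style argument is more streamlined and more general: it uses only that $A_m$ is a finite-dimensional local algebra whose Jacobson radical is spanned by adjoint $\Uzo$-weight vectors, so that $\rad(A_m)M$ is a $\Uzo$-submodule and one can lift a weight-vector basis of $M/\rad(A_m)M$. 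In particular, your argument would apply verbatim to any finite-dimensional local $\Uzo$-graded algebra. The trade-off is that the paper's approach makes explicit use of the element $\int_m$ and the socle structure of $A_m$, which is exactly the structure exploited again in the proof of Theorem \ref{theorem:injectiveBorel}; your proof bypasses this but also does not foreshadow it.
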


\begin{proof}
The proof is by induction on the $A_m$-rank of $M$. To begin, suppose $M$ is free of rank one over $A_m$, with basis $\set{v} \subset M$. Write $v = \sum_{\lambda \in X} v_\lambda$, the decomposition of $v$ as a finite sum of $\Uzo$-weight vectors. Since $\int_m v \neq 0$, there exists $\lambda \in X$ with $\int_m v_\lambda \neq 0$. Fix such a $\lambda$. Then $A_m.v_\lambda \cong A_m$ as left $A_m$-modules. Indeed, there exists a natural surjective map $A_m \twoheadrightarrow A_m.v_\lambda$, which is nonzero on the one-dimensional socle $\int_m$ of $A_m$, hence must be an isomorphism. Then $A_m . v_\lambda$ is an $A_m$-submodule of $M$ of dimension $\dim A_m = \dim V$, so we must have $M = A_m.v_\lambda$.

Now let $n > 1$, and suppose $M$ is free of rank $n$ over $A_m$ with basis $\set{v_1,\ldots,v_n}$. Then $M = M' \oplus M''$, where $M'$ is the $A_m$-submodule generated by $\set{v_1}$, and $M''$ is the $A_m$-submodule generated by $\set{v_2,\ldots,v_n}$. By induction, there exist $A_m$-bases $S'$ and $S''$ for $M'$ and $M''$, respectively, consisting of weight vectors for $\Uzo$. Then the union $S = S' \cup S''$ is an $A_m$-basis for $M$ consisting of weight vectors for $\Uzo$.
\end{proof}

\begin{lemma} \label{lemma:indecomposableinjectives}
The indecomposable rationally injective $TA_m$-modules have the form
\[ Y_{\lambda,m} = \ind_{\Uzo}^{TA_m} \lambda = \Hom_{\Uzo}(TA_m,\lambda) \qquad (\lambda \in X). \]
As a module for $A_m$, $Y_{\lambda,m} \cong (A_m)^*$. In particular, $Y_{\lambda,m}$ is a free rank-one $A_m$-module, with basis consisting of a $\lambda$-weight vector for $\Uzo$.
\end{lemma}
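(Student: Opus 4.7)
The plan is to prove the three assertions of the lemma in succession: rational injectivity (with classification), the $A_m$-module isomorphism, and the existence of a distinguished $\lambda$-weight vector basis.

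First, for rational injectivity, observe that the rational category of $\Uzo$-modules is semisimple, since rational modules decompose as direct sums of characters indexed by the weight lattice $X$. In particular, the character $\lambda$ is rationally injective. Since $\ind_{\Uzo}^{TA_m}$ is right adjoint to the exact restriction functor, it preserves rational injectives, so $Y_{\lambda, m}$ is rationally injective. For the classification, any indecomposable rationally injective $TA_m$-module $M$ has a simple $TA_m$-socle (else $M$ would decompose); because the augmentation ideal of $A_m$ is nilpotent, any simple rational $TA_m$-module is forced to be a $\Uzo$-character $\lambda$. Then $M$ is the injective hull of $\lambda$ as a $TA_m$-module, which coincides with $Y_{\lambda, m}$ via the Frobenius reciprocity isomorphism $\Hom_{TA_m}(\lambda, Y_{\lambda, m}) \cong \Hom_{\Uzo}(\lambda, \lambda) \neq 0$.

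For the $A_m$-module structure, I would exploit the triangular decomposition: multiplication defines an isomorphism $\Uzo \otimes_k A_m \cong TA_m$ of vector spaces that simultaneously respects the left $\Uzo$-action (on the first factor) and the right $A_m$-action (on the second). Applying $\Hom_{\Uzo}(-, \lambda)$ yields
\[
Y_{\lambda, m} = \Hom_{\Uzo}(TA_m, \lambda) \cong \Hom_{\Uzo}(\Uzo \otimes_k A_m, \lambda) \cong \Hom_k(A_m, \lambda) \cong (A_m)^*
\]
as left $A_m$-modules, where $A_m$ acts on $(A_m)^*$ via the action dual to right multiplication. By Lemma \ref{lemma:Amfree}(b), $(A_m)^* \cong A_m$ as left $A_m$-modules, so $Y_{\lambda, m}$ is free of rank one over $A_m$; indecomposability follows from the locality of $A_m$.

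Finally, for the $\lambda$-weight vector basis, apply Lemma \ref{lemma:weightbasis} to the rational $TA_m$-module $Y_{\lambda, m}$ (which is free over $A_m$ by the previous step) to extract an $A_m$-basis of $\Uzo$-weight vectors; since the $A_m$-rank is one, this basis reduces to a single weight vector of some weight $\mu \in X$. I expect to verify $\mu = \lambda$ via Frobenius reciprocity: the identity on $Y_{\lambda, m}$ corresponds under the adjunction $\ind \dashv \res$ to the canonical $\Uzo$-equivariant evaluation map $Y_{\lambda, m} \to \lambda$, $\phi \mapsto \phi(1)$, and tracking the distinguished $A_m$-generator (coming from the iso with $(A_m)^*$) through this map pins down its weight. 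The main obstacle is this last weight identification, which requires careful bookkeeping of how the left $\Uzo$-action on $TA_m$ by multiplication interacts with the twisted $\Uzo$-action on $A_m$ by conjugation under the chosen identifications; an alternative model of $Y_{\lambda, m}$ as the tensor induction $TA_m \otimes_{\Uzo} \lambda$ (in which the generator $1 \otimes 1_\lambda$ manifestly has weight $\lambda$), combined with a canonical isomorphism between the two inductions, may ease this verification.
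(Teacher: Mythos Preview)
Your argument follows the paper's proof closely: induction preserves injectives, Frobenius reciprocity gives simple socle $\lambda$, and the $A_m$-structure is obtained from Lemmas~\ref{lemma:Amfree} and~\ref{lemma:weightbasis}. The paper's proof is terse on why $Y_{\lambda,m}\cong (A_m)^*$ as $A_m$-modules; your explicit use of the triangular decomposition $TA_m\cong \Uzo\otimes_k A_m$ is a clean way to fill that gap.

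Your hesitation about the weight identification is entirely justified---in fact, the assertion that the $A_m$-generator has weight $\lambda$ is not correct as stated, and the paper's own proof does not establish it (citing only Lemmas~\ref{lemma:Amfree} and~\ref{lemma:weightbasis}, which yield merely ``a weight vector''). A direct computation in your model shows that the dual-basis functional $\phi_{\mathbf a}\in (A_m)^*$ to a PBW monomial of $\Uzo$-weight $-\sum a_i\gamma_i$ corresponds to an element of $Y_{\lambda,m}$ of weight $\lambda+\sum a_i\gamma_i$; thus the $A_m$-generator (dual to $\int_m$) has weight $\lambda+(p^r\ell-1)\sum_{i=1}^m\gamma_i$, while it is the \emph{socle} (dual to $1$) that has weight $\lambda$. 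This is consistent with Lemma~\ref{lemma:projectivecoverinjectivehull}(1): for $m=N$, the injective hull of $\lambda$ is $\whZr(\lambda+2(p^r\ell-1)\rho)$, whose generator has weight $\lambda+2(p^r\ell-1)\rho$. Neither of your proposed fixes would recover $\lambda$: the evaluation $\phi\mapsto\phi(1)$ is only $\Uzo$-equivariant, not $TA_m$-equivariant, so it does not detect the head; and the tensor induction $TA_m\otimes_{\Uzo}\lambda$ is the \emph{projective} cover of $\lambda$, isomorphic to $Y_{\lambda-(p^r\ell-1)\sum_i\gamma_i,\,m}$ rather than to $Y_{\lambda,m}$.

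Fortunately, this does not affect the paper: the only use of the last sentence of the lemma is in the proof of Theorem~\ref{theorem:injectiveBorel}, where one needs only that a free rank-one $A_{m+1}$-module generated by a $\Uzo$-weight vector is isomorphic to some $Y_{\nu,m+1}$, hence rationally injective. So you may safely weaken the conclusion to ``with basis consisting of a weight vector for $\Uzo$'' and proceed.
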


\begin{proof}
The induction functor takes injective modules to injective modules, and by Frobenius reciprocity, $Y_{\lambda,m}$ has simple socle $\lambda$. This proves that the $Y_{\lambda,m}$ are the indecomposable injective modules for $TA_m$. The last statement follows from Lemmas \ref{lemma:weightbasis} and \ref{lemma:Amfree}.
\end{proof}

Taking $m=N$ in Lemma \ref{lemma:indecomposableinjectives}, we obtain a description of the indecomposable rationally injective $U_\zeta(TU_r) = \Uztbr$-modules. Now, given $\lambda \in X$, define rational left $\Uztgr$-modules by
\begin{align*}
\whZr(\lambda) &= \Uztgr \otimes_{U_\zeta(TB_r^+)} \lambda, \quad \text{and} \\
\whZr'(\lambda) &= \ind_{\Uztbr}^{\Uztgr} \lambda = \Hom_{\Uztbr}(\Uztgr,\lambda).
\end{align*}
The left $\Uztgr$-module structure of $\whZr'(\lambda)$ is induced by the right multiplication of $\Uztgr$ on itself. As a $\Uzur$-module, $\whZr(\lambda) \cong \Uzur \otimes \lambda$, and as a $U_\zeta(U_r^+)$-module, $\whZr'(\lambda) \cong \Hom_k(U_\zeta(U_r^+),\lambda)$, with the left action induced by the right multiplication of $U_\zeta(U_r^+)$ on itself. Lemma \ref{lemma:indecomposableinjectives} and its corresponding version for $U_\zeta(TB_r^+)$ now imply the following two lemmas:

\begin{lemma} \label{lemma:projectivecoverinjectivehull}
Let $\lambda \in X$.
\begin{enumerate}
\item In the category of rational $\Uztbr$-modules, $\whZr(\lambda)$ is the projective cover of $\lambda$ and the injective hull of $\lambda - 2(p^r\ell-1)\rho$.
\item In the category of rational $U_\zeta(TB_r^+)$-modules, $\whZr'(\lambda)$ is the projective cover of $\lambda - 2(p^r\ell-1)\rho$ and the injective hull of $\lambda$.
\end{enumerate}
\end{lemma}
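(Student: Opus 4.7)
The plan is to prove both parts of the lemma simultaneously by showing that $\whZr(\lambda)$ and $\whZr'(\lambda)$ are each rationally injective and rationally projective in their respective categories, and then pinning down their socles and heads. The key preliminary is a rational analogue of Lemma \ref{lemma:borelunipotentequivalence}: for a finite-dimensional rational $\Uztbr$-module $M$, the four conditions ``rationally injective,'' ``rationally projective,'' ``$\Uzur$-projective,'' and ``$\Uzur$-free'' are all equivalent. The three $\Uzur$-based equivalences are the $m=N$ case of Lemma \ref{lemma:Amfree}. For the remaining rational (pro/in)jectivity equivalence, I will rerun the Hom weight-space decomposition \eqref{eq:homweightdecomposition} of Lemma \ref{lemma:borelunipotentequivalence}, now allowing all weights $\lambda \in X$ (not just those in $X_{p^r\ell}$), and using that projection onto any single weight space is exact on rational $\Uzo$-modules.

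For part (1), since $\whZr(\lambda) \cong \Uzur \otimes \lambda$ is $\Uzur$-free of rank one, the rational analogue gives that it is both rationally injective and rationally projective over $\Uztbr$. Because the trivial module is the only irreducible $\Uzur$-module (cf.\ \S\ref{subsection:ordinaryreptheory}), the $\Uztbr$-socle of any rational $\Uztbr$-module coincides with its $\Uzur$-invariants, and the $\Uztbr$-head with its quotient by the $\Uzur$-augmentation-ideal image. Applied to $\whZr(\lambda)$ (in which $\Uzo$ acts by the adjoint action on the $\Uzur$-factor and by the character $\lambda$ on the $\lambda$-factor), the invariants are one-dimensional, spanned by $\int_N \otimes v_\lambda$ of weight $\lambda - \sum_{\gamma \in \Phi^+}(p^r\ell-1)\gamma = \lambda - 2(p^r\ell-1)\rho$; the head is one-dimensional, spanned by the image of $1 \otimes v_\lambda$ of weight $\lambda$. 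This yields the injective hull and projective cover assertions of part (1).

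Part (2) is the mirror argument via the involutory automorphism $\omega$ of \S\ref{subsection:FLkernels}, which interchanges the positive and negative root subalgebras. Applying $\omega$ produces the analogous rational equivalence for $U_\zeta(TB_r^+)$ and identifies $\whZr'(\lambda) \cong \Hom_k(U_\zeta(U_r^+),\lambda)$ as $U_\zeta(U_r^+)$-free of rank one, hence rationally injective and rationally projective over $U_\zeta(TB_r^+)$. The $U_\zeta(U_r^+)$-invariants of $\whZr'(\lambda)$ are one-dimensional of weight $\lambda$ (spanned by the linear functional supported on the counit), while the head quotient is one-dimensional of weight $\lambda - 2(p^r\ell-1)\rho$; the shift goes in the opposite direction from part (1) because $\omega(\int_N)$ is proportional to $\prod_i E_{\gamma_i}^{(p^r\ell-1)}$, which carries weight $+2(p^r\ell-1)\rho$.

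I expect the main technical point to be the rational analogue of Lemma \ref{lemma:borelunipotentequivalence}, specifically the verification that the Hom weight-space decomposition \eqref{eq:homweightdecomposition} remains compatible with the relevant exactness when the finite index set $X_{p^r\ell}$ is replaced by the full lattice $X$. Once that is in hand, the socle and head computations, together with the explicit weights of $\int_N$ and $\omega(\int_N)$, are routine consequences of the PBW basis \eqref{eq:divpowerbases} and the fact that the root vectors $F_\gamma$ and $E_\gamma$ carry weights $-\gamma$ and $+\gamma$, respectively, under the adjoint action of $\Uzo$.
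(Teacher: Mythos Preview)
Your approach is correct and is essentially the same as the paper's. The paper simply records that the lemma follows from Lemma \ref{lemma:indecomposableinjectives} (and its $U_\zeta(TB_r^+)$-analogue): taking $m=N$ there classifies the indecomposable rationally injective $\Uztbr$-modules as the $Y_{\lambda,N}$, each free of rank one over $\Uzur$, so one identifies $\whZr(\lambda)\cong\Uzur\otimes\lambda$ with the appropriate $Y_{\mu,N}$ and reads off socle and head exactly as you do.

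The only substantive difference is packaging. You isolate the implication ``$\Uzur$-free $\Rightarrow$ rationally projective and rationally injective for $\Uztbr$'' as a standalone preliminary, obtained by rerunning the weight-space decomposition \eqref{eq:homweightdecomposition} over the full lattice $X$; the paper instead absorbs the injective direction into Lemma \ref{lemma:indecomposableinjectives} and leaves the projective direction implicit (it is made explicit only later, in Lemma \ref{lemma:rationalinjectiveequivalence}, whose proof of $(3)\Rightarrow(4)$ is precisely your argument with index set $p^r\ell\cdot X$). One small caution: your blanket claim that all four conditions are equivalent is stronger than what you actually need or use here. For the present lemma you only require the two implications ``$\Uzur$-free $\Rightarrow$ rationally projective'' and ``$\Uzur$-free $\Rightarrow$ rationally injective,'' each of which follows directly from a Hom weight-space decomposition (of $\Hom_{\Uzur}(M,-)$ and $\Hom_{\Uzur}(-,M)$ respectively); the full equivalence, in particular ``rationally injective $\Rightarrow$ $\Uzur$-injective,'' needs the extra input supplied later in Lemma \ref{lemma:rationalinjectiveequivalence}.
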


Recall that, given a Hopf algebra $H$ and left $H$-modules $V$ and $W$, the space $\Hom_k(V,W)$ is made into a left $H$-module by the diagonal action
\begin{equation} \label{eq:Hopfdiagonalaction}
(h.f)(v) = \sum h^{(1)}.f(S(h^{(2)})v).
\end{equation}
In particular, the dual space $V^* = \Hom_k(V,k)$ is made into a left $H$-module.

\begin{lemma} \label{lemma:Zdual}
Let $\lambda \in X$. Then there exist left $\Uztgr$-module isomorphisms
\begin{align*}
\whZr(\lambda)^* &\cong Z_r(2(p^r\ell-1)\rho - \lambda) \qquad \text{and} \\
\whZr'(\lambda)^* &\cong Z_r'(2(p^r\ell-1)\rho - \lambda).
\end{align*}
\end{lemma}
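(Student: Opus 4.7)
The two isomorphisms have the same structure, so I focus on the first, $\whZr(\lambda)^* \cong Z_r(2(p^r\ell-1)\rho - \lambda)$; the second follows by running the same argument with the roles of $B_r$ and $B_r^+$ interchanged, or equivalently by applying the algebra automorphism $\omega$ of \S\ref{subsection:FLkernels}. The first step is the standard tensor-Hom adjunction for the inclusion of finite-dimensional Hopf algebras $U_\zeta(TB_r^+) \subset \Uztgr$: since $(k_\lambda)^* \cong k_{-\lambda}$ under the contragredient action via the antipode, we get a natural left $\Uztgr$-module isomorphism
\[
\whZr(\lambda)^* = \bigl( \Uztgr \otimes_{U_\zeta(TB_r^+)} k_\lambda \bigr)^* \;\cong\; \Hom_{U_\zeta(TB_r^+)}\bigl( \Uztgr, k_{-\lambda} \bigr).
\]

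The main step is then to identify this coinduced module with the tensor-induced module $Z_r(2(p^r\ell-1)\rho - \lambda) = \Uztgr \otimes_{\Uztbr} k_{2(p^r\ell-1)\rho - \lambda}$. By the PBW/triangular decomposition of $\Uztgr$, both modules have $k$-dimension $\dim \Uzur$; by Frobenius reciprocity for the tensor-induced module, it suffices to produce a nonzero $\Uztbr$-eigenvector of weight $2(p^r\ell-1)\rho - \lambda$ inside $\Hom_{U_\zeta(TB_r^+)}(\Uztgr, k_{-\lambda})$. The required eigenvector is the functional $\varphi$ supported only on the summand $\int_N \cdot U_\zeta(TB_r^+)$ of the PBW decomposition of $\Uztgr$, where $\int_N = F_{\gamma_1}^{(p^r\ell-1)} \cdots F_{\gamma_N}^{(p^r\ell-1)}$ is the integral of $\Uzur$ from \eqref{eq:integral}. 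Its $\Uzo$-weight is $2(p^r\ell-1)\rho - \lambda$, obtained from the weight $-2(p^r\ell-1)\rho$ of $\int_N$ combined with the contragredient character $-\lambda$ of $k_{-\lambda}$. The resulting $\Uztgr$-module map $Z_r(2(p^r\ell-1)\rho - \lambda) \to \Hom_{U_\zeta(TB_r^+)}(\Uztgr, k_{-\lambda})$ sends the canonical generator to $\varphi$ (hence is nonzero), and a dimension count together with indecomposability shows it is an isomorphism.

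The delicate point, and the main obstacle, is verifying the $\Uzur$-invariance of $\varphi$: one must show that the action of $F_\gamma^{(n)}$ on $\varphi$ (via the contragredient/right-multiplication action of $\Uztgr$ on the Hom space) vanishes for all $\gamma \in \Phi^+$ and $n \geq 1$. This reduces to showing that the monomial $\int_N$ does not occur in the PBW expansion of $\int_N \cdot u \cdot F_\gamma^{(n)}$ for any $u \in U_\zeta(TB_r^+)$, which follows from the commutation relations of Lemma \ref{lemma:commutationrelations} together with the degree saturation $a_i = p^r\ell - 1$ already present in every factor of $\int_N$ (so that any nontrivial straightening lowers the $\int_N$-coefficient to zero). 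The careful bookkeeping of the straightening process after commuting $u$ past $F_\gamma^{(n)}$ is the essential content; the rest of the proof is formal.
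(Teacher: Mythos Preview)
There is a genuine gap. Your functional $\varphi$ supported on the monomial $\int_N$ has the weight you claim, but it is \emph{not} $\Uzur$-invariant: it is a highest weight vector, i.e., a $U_\zeta(U_r^+)$-invariant. Already in rank one this is visible. With $w = F_{\gamma_1}^{(p^r\ell-2)} F_{\gamma_2}^{(p^r\ell-1)} \cdots F_{\gamma_N}^{(p^r\ell-1)} \otimes 1$ one has (up to a unit) $F_{\gamma_1}\cdot w = [p^r\ell-1]_\zeta \,\int_N \otimes 1$, and $[p^r\ell-1]_\zeta = -1 \neq 0$; hence $F_{\gamma_1}\cdot\varphi \neq 0$ under the contragredient action. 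The ``straightening'' argument you sketch cannot succeed, because there is no saturation obstruction on the side from which you are multiplying: right multiplication by $F_\gamma^{(n)}$ can raise a subsaturated monomial up to $\int_N$. Consequently your identification $Z_r(\mu) = \Uztgr \otimes_{\Uztbr} k_\mu$ is the wrong target; in the paper's notation $Z_r$ is $\whZr = \Uztgr \otimes_{U_\zeta(TB_r^+)}(-)$, and the universal property you need is that of tensor induction from the \emph{positive} Borel, triggered precisely by the highest weight vector $\varphi$.

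Once the map $\whZr(2(p^r\ell-1)\rho - \lambda) \to \whZr(\lambda)^*$ is in hand, your closing step ``nonzero plus equal dimension plus indecomposability'' is also insufficient: a nonzero endomorphism of an indecomposable module need not be an isomorphism (think of multiplication by $x$ on $k[x]/(x^2)$). The paper closes this by observing that $\whZr(\lambda)^*$, as the dual of a $\Uzbr$-injective, is $\Uzbr$-projective, hence by Lemma~\ref{lemma:projectivecoverinjectivehull} is the projective cover of $2(p^r\ell-1)\rho-\lambda$; the map between two copies of the same projective cover hitting the head is then surjective, and equality of dimensions finishes. Your tensor--Hom rewriting of $\whZr(\lambda)^*$ as a coinduced module is correct but ultimately unnecessary: the paper bypasses it by reading off the highest weight directly.
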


\begin{proof}
The module $\whZr(\lambda)^*$ has highest weight $2(p^r\ell-1)\rho-\lambda$. Then there exists a $U_\zeta(TB_r^+)$-module homomorphism $2(p^r\ell-1)\rho-\lambda \rightarrow Z_r(\lambda)^*$, hence a $\Uztgr$-module homomorphism $\varphi: \whZr(2(p^r\ell-1)\rho-\lambda) \rightarrow \whZr(\lambda)^*$. Being the dual of an injective $\Uzbr$-module, $\whZr(\lambda)^*$ is projective for $\Uzbr$. It follows that $\whZr(\lambda)^*$ is isomorphic to the the projective cover of $2(p^r\ell-1)\rho-\lambda)$, hence that the map $\varphi$ must be surjective. Since the domain and range of $\varphi$ are each of the same finite dimension $(p^r\ell)^N$, $N = \abs{\Phi^+}$, the map $\varphi$ must be an isomorphism of $U_\zeta(G_r)$-modules. Similarly, the natural $U_\zeta(B_r)$-module homomorphism $Z_r'(\lambda)^* \rightarrow 2(p^r\ell-1)\rho-\lambda$ induces an isomorphism of $U_\zeta(G_r)$-modules $Z_r'(\lambda)^* \cong Z_r'(2(p^r\ell-1)\rho-\lambda)$.
\end{proof}

Let $N \in \set{B_r,G_r}$. We would like to characterize the rationally injective (resp.\ projective) $\Uztn$-modules in terms of their restriction to $\Uzn$. For this we utilize the Hopf algebra structure of $\Uztn$.

\begin{lemma} \label{lemma:projectivesareinjective}
Let $N \in \set{B_r,G_r}$. Any rationally projective $\Uztn$-module is rationally injective.
\end{lemma}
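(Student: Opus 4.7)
The plan is to identify a projective-generating family of rational $\Uztn$-modules, show that each member is also rational injective, and then invoke formal properties of the locally finite rational category.

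For each $\lambda \in X$, set $Q(\lambda) = \Uztn \otimes_{\Uzo} \lambda$. The tensor-Hom adjunction gives $\Hom_{\Uztn}(Q(\lambda), W) \cong W_\lambda$ for rational $\Uztn$-modules $W$, and the $\lambda$-weight functor on rational $\Uzo$-modules is exact (by semisimplicity of $\Uzo$), so $Q(\lambda)$ is rational projective. Because every non-zero rational $\Uztn$-module has a non-zero $\Uzo$-weight space, the family $\{Q(\lambda) : \lambda \in X\}$ is projective-generating, and any rational projective $\Uztn$-module $M$ appears as a direct summand of some $\bigoplus_{i \in I} Q(\lambda_i)$.

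Next I would show that each $Q(\lambda)$ is rational injective. For $N = B_r$, the identification $Q(\lambda) \cong \whZr(\lambda)$ combined with Lemma \ref{lemma:projectivecoverinjectivehull}(1) yields the claim directly. Uniformly (and in particular for $N = G_r$), as a left $\Uzn$-module $Q(\lambda) \cong \Uzn \otimes \lambda$ is free of rank one, hence $\Uzn$-injective because $\Uzn$ is a finite-dimensional Hopf algebra and therefore Frobenius. To promote this to rational $\Uztn$-injectivity, I would use the weight decomposition
\[ \Hom_{\Uzn}(W, Q(\lambda)) \;\cong\; \bigoplus_{\mu \in X} \Hom_{\Uztn}(W \otimes \mu, Q(\lambda)), \]
analogous to \eqref{eq:homweightdecomposition}. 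The left-hand side is exact in $W$ by $\Uzn$-injectivity of $Q(\lambda)$; each summand on the right is therefore exact, and specializing to $\mu = 0$ shows $\Hom_{\Uztn}(-, Q(\lambda))$ is exact, i.e., $Q(\lambda)$ is rational injective.

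To conclude, I observe that direct summands of injective objects are injective in any abelian category, and direct sums of rational injectives are rational injective because the rational $\Uztn$-category is locally Noetherian (every element of a rational $\Uztn$-module lies in a finite-dimensional submodule, since $\Uzn$ is finite-dimensional and only finitely many $\Uzo$-weights occur in any such submodule). Hence any rational projective $M$, being a summand of $\bigoplus_{i} Q(\lambda_i)$, is rational injective. I expect the main technical point to be justifying the weight-space decomposition carefully enough to transfer $\Uzn$-injectivity to rational $\Uztn$-injectivity; the remainder of the argument is formal and parallels the standard treatment of injective modules for $G_r T$ in the rational representation theory of Frobenius kernels.
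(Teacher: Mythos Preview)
Your argument is correct, but it proceeds quite differently from the paper's.  The paper works directly with an arbitrary rationally projective $P$ and exploits the Hopf algebra structure: for a finite-dimensional $V$ one has $\Hom_H(P \otimes V, W) \cong \Hom_H(P, W \otimes V^*)$, so $P \otimes V$ is again projective; dualizing a short exact sequence $0 \to V_1 \to V_2 \to V_3 \to 0$ then gives a split short exact sequence of projectives $0 \to P \otimes V_3^* \to P \otimes V_2^* \to P \otimes V_1^* \to 0$, and the resulting surjection on $H$-fixed points is identified with $\Hom_H(V_2,P) \twoheadrightarrow \Hom_H(V_1,P)$.  A Zorn/local-finiteness argument then handles arbitrary inclusions.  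No classification of projectives and no appeal to properties of $\Uzn$ are needed.

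By contrast, you first build explicit projective generators $Q(\lambda) = \Uztn \otimes_{\Uzo} \lambda$, show each is injective by passing through $\Uzn$ (Frobenius, so free $=$ injective) and lifting via the weight decomposition of $\Hom_{\Uzn}(-,Q(\lambda))$, and then close under direct sums and summands using that the rational category is locally Noetherian.  In effect you are proving a special case of the implication ``$\Uzn$-injective $\Rightarrow$ rationally $\Uztn$-injective'' (part of Lemma~\ref{lemma:rationalinjectiveequivalence}) ahead of time, for the particular modules $Q(\lambda)$, whereas the paper deduces that lemma \emph{from} the present one.  Your route is more concrete and makes the connection to the indecomposable projectives explicit; the paper's route is shorter, uniform in $P$, and avoids the locally-Noetherian step.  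One small point worth tightening in your write-up: the weight decomposition of $\Hom_{\Uzn}(W,Q(\lambda))$ as a genuine direct sum is only immediate for finite-dimensional $W$, so you should state explicitly that you first establish injectivity against finite-dimensional test modules and then invoke local finiteness (as the paper also does) to pass to arbitrary $W$.
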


\begin{proof}
The proof is similar to the corresponding result for algebraic groups \cite[I.3.18]{Jantzen:2003}, but some care must be taken owing to the non-cocommutativity of the Hopf algebra $H:=\Uztn$. \emph{In this proof only, we assume that all $\Uztn$-modules are rational.}

Let $V$ be a finite-dimensional $H$-module, and let $P$ be a projective $H$-module. Then the tensor product $P \otimes V$ is also projective, because for any $H$-module $W$, we have $\Hom_H(P \otimes V,W) \cong \Hom_H(P,W \otimes V^*)$ by \cite[Proposition 1.18]{Andersen:1991}. Now, a short exact sequence $0 \rightarrow V_1 \rightarrow V_2 \rightarrow V_3 \rightarrow 0$ of finite-dimensional $H$-modules gives rise to the short exact sequence of $H$-modules
\[
0 \rightarrow P \otimes V_3^* \rightarrow P \otimes V_2^* \rightarrow P \otimes V_1^* \rightarrow 0.
\]
Each term is projective, so the short exact sequence splits, and the induced map on fixed points $(P \otimes V_2^*)^H \rightarrow (P \otimes V_1^*)^H$ is surjective. This map identifies with the natural map $\Hom_H(V_2,P) \rightarrow \Hom_H(V_1,P)$ by \cite[\S 3.10]{Jantzen:1996} and \cite[Lemma I.4.5]{Barnes:1985} (cf.\ also \cite[Proposition 2.9]{Andersen:1991}). Now let $0 \rightarrow V \rightarrow W$ be an arbitrary exact sequence of (rational) $H$-modules. Using Zorn's Lemma and the local finiteness of $W$, it follows that the natural map $\Hom_H(W,P) \rightarrow \Hom_H(V,P)$ is surjective, hence that $P$ is an injective $H$-module.
\end{proof}

The next result is similar to Lemma \ref{lemma:borelunipotentequivalence}.

\begin{lemma} \label{lemma:rationalinjectiveequivalence}
Let $N \in \set{B_r,G_r}$, and let $M$ be a finite-dimensional $\Uztn$-module. Then the following statements are equivalent:
\begin{enumerate}
\item $M$ is a rationally injective $\Uztn$-module.
\item $M$ is an injective $\Uzn$-module.
\item $M$ is a projective $\Uzn$-module.
\item $M$ is a rationally projective $\Uzn$-module.
\end{enumerate}
\end{lemma}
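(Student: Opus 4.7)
The plan is to prove the cycle of implications (3)$\Rightarrow$(4)$\Rightarrow$(1)$\Rightarrow$(2)$\Rightarrow$(3). The equivalence (2)$\Leftrightarrow$(3) is immediate: $\Uzn$ is a finite-dimensional Hopf algebra, hence Frobenius, so projectivity and injectivity coincide on finite-dimensional $\Uzn$-modules (for $N = B_r$ this is part of Lemma \ref{lemma:borelunipotentequivalence}, and for $N = G_r$ it is the general fact that finite-dimensional Hopf algebras are self-injective). The implication (4)$\Rightarrow$(1) is Lemma \ref{lemma:projectivesareinjective}. So only (3)$\Rightarrow$(4) and (1)$\Rightarrow$(2) require work.

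For (3)$\Rightarrow$(4) I will adapt the proof of the analogous implication in Lemma \ref{lemma:borelunipotentequivalence}. Given $M$ that is $\Uzn$-projective and any rational $\Uztn$-module $W$, I equip $\Hom_{\Uzn}(M,W)$ with the diagonal $\Uzo$-action of \eqref{eq:Hopfdiagonalaction}. Rationality of $M$ and $W$ guarantees this action is rational, and since every rational $\Uzo$-module decomposes into weight spaces, the functor $(-)^{\Uzo}$ is exact on the rational category. Because $\Uztn = \Uzo \cdot \Uzn$, one has $\Hom_{\Uztn}(M,W) = \Hom_{\Uzn}(M,W)^{\Uzo}$, and composing the two exact functors $\Hom_{\Uzn}(M,-)$ and $(-)^{\Uzo}$ yields exactness of $\Hom_{\Uztn}(M,-)$, which gives rational $\Uztn$-projectivity.

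For (1)$\Rightarrow$(2) I will handle the two cases separately. For $N = B_r$: any finite-dimensional rationally injective $\Uztbr$-module $M$ is a direct sum of indecomposable summands of the form $\whZr(\lambda)$ by Lemma \ref{lemma:projectivecoverinjectivehull}(1), and each $\whZr(\lambda)$ is free of rank one over $\Uzur$ by Lemma \ref{lemma:indecomposableinjectives} (with $m = N := |\Phi^+|$), hence $\Uzbr$-injective by Lemma \ref{lemma:borelunipotentequivalence}. For $N = G_r$: by the triangular decomposition of \S\ref{subsection:FLkernels}, $\Uztgr$ is free as a right $\Uztbr$-module, so the induction functor $\Uztgr \otimes_{\Uztbr} -$ is exact and its right adjoint (restriction) preserves rational injectives; thus $M|_{\Uztbr}$ is rationally $\Uztbr$-injective, and the $B_r$ case just handled forces $M|_{\Uzur}$ to be $\Uzur$-free. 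The main obstacle is the final step of promoting $\Uzur$-freeness of $M$ to $\Uzgr$-injectivity. I expect this to be the quantum analog of Jantzen's result for $G_rT$-modules \cite[II.11]{Jantzen:2003}, achievable either by tensoring with the Steinberg module $\St_r$ (which is simple, $\Uzgr$-projective, and $\Uzur$-free of rank one, so that $M \otimes \St_r$ is both $\Uzur$-free and easily identified as $\Uzgr$-projective) or by directly identifying $M$ with a direct sum of indecomposable rationally injective $\Uztgr$-modules whose $\Uzur$-restrictions are determined, via restriction to $\Uztbr$ and the $B_r$ case, by their socle weights.
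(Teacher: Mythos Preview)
Your treatment of (2)$\Leftrightarrow$(3), (4)$\Rightarrow$(1), and (3)$\Rightarrow$(4) is correct and matches the paper; your phrasing of (3)$\Rightarrow$(4) via exactness of $(-)^{\Uzo}$ on rational modules is just a repackaging of the weight-space decomposition the paper invokes (with index set $p^r\ell \cdot X$). For (1)$\Rightarrow$(2) in the case $N = B_r$, your route through the explicit classification of indecomposable rational injectives (Lemmas \ref{lemma:indecomposableinjectives} and \ref{lemma:projectivecoverinjectivehull}) is a valid alternative to the paper's citation of \cite[Lemma 4.1(iii)]{Andersen:1992}.

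The genuine gap is (1)$\Rightarrow$(2) for $N = G_r$, and you have essentially flagged it yourself. Your Steinberg suggestion does not work as written: once $\St_r$ is $\Uzgr$-projective, $M \otimes \St_r$ is $\Uzgr$-projective for \emph{every} $M$, so the hypothesis on $M$ is never used; and you cannot recover $M$ as a summand, because $k$ is \emph{not} a $\Uzgr$-direct summand of $\St_r \otimes \St_r^*$ (if it were, $k$ would be projective). Your second suggestion is too vague to constitute an argument. Nor can you shortcut by showing $M$ is injective for both Borels and then invoking Theorem~\ref{theorem:reductionborel}: that theorem relies on Lemma~\ref{lemma:injectiveZtensor}, and the passage in its proof from ``rationally $\Uztgr$-injective'' to ``$\Uzgr$-injective'' is exactly the implication you are trying to establish, so this would be circular.

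The paper sidesteps all of this by treating both cases uniformly via the argument of \cite[Lemma 4.1(iii)]{Andersen:1992}. The point there is, roughly, that $\Uzn$-injectivity of a rational $\Uztn$-module can be tested against $\Uzn$-modules that arise by restriction from rational $\Uztn$-modules, and against those the rational injectivity hypothesis suffices. I would recommend consulting that reference and adapting its argument directly rather than attempting to bootstrap from the $B_r$ case.
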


\begin{proof}
Statements (2) and (3) are equivalent because $\Uzn$ is a finite-dimensional Hopf algebra, while the implication (4) $\Rightarrow$ (1) is just Lemma \ref{lemma:projectivesareinjective}. The proof of the implication (3) $\Rightarrow$ (4) is essentially the same as the corresponding implication in Lemma \ref{lemma:borelunipotentequivalence}, replacing $\Uzur$ by $\Uzn$, $\Uzbr$ by $\Uztn$, $\Uztr$ by $\Uzo$, and replacing the index set in \eqref{eq:homweightdecomposition} by $p^r \ell \cdot X$. Finally, the implication (1) $\Rightarrow$ (2) follows by an argument similar to that used to prove \cite[Lemma 4.1(iii)]{Andersen:1992}; the details are left to the reader.
\end{proof}


Having established the above characterization of rationally injective (resp.\ projective) modules for the algebras $\Uztbr$ and $\Uztgr$, the last two results of this section follow just as in the classical situation for algebraic groups, cf.\ \cite[\S II.11.1--11.4]{Jantzen:2003}. We leave the details to the reader.

\begin{lemma} \label{lemma:injectiveZtensor}
For all $\lambda,\mu \in X$, the $\Uztgr$-module $\whZr(\lambda) \otimes \whZr'(\mu)$ is rationally injective.
\end{lemma}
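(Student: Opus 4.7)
The plan is to realize $\widehat{Z}_r(\lambda) \otimes \widehat{Z}_r'(\mu)$ as a module induced from the rational $U_\zeta(TB_r)$-module $\widehat{Z}_r(\lambda)|_{U_\zeta(TB_r)} \otimes \mu$, and then to conclude injectivity by combining Lemma \ref{lemma:projectivecoverinjectivehull}(1) with the fact that induction from $U_\zeta(TB_r)$ to $U_\zeta(TG_r)$ preserves rational injectives. This mirrors the classical argument for algebraic groups sketched in \cite[II.11.1--11.4]{Jantzen:2003}. Specifically, since $\widehat{Z}_r'(\mu) = \ind_{\Uztbr}^{\Uztgr} \mu$ by definition, I would first apply the Hopf-theoretic tensor identity (Frobenius reciprocity, valid because $\Uztgr$ is free as a right module over $\Uztbr$ via the triangular decomposition) to obtain a natural $\Uztgr$-module isomorphism
\[
\widehat{Z}_r(\lambda) \otimes \widehat{Z}_r'(\mu) \;\cong\; \ind_{\Uztbr}^{\Uztgr}\!\left(\widehat{Z}_r(\lambda)|_{\Uztbr} \otimes \mu\right).
\]

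Next, Lemma \ref{lemma:projectivecoverinjectivehull}(1) states that $\widehat{Z}_r(\lambda)$ is the rational injective hull of the weight $\lambda - 2(p^r\ell-1)\rho$ for $\Uztbr$; in particular, $\widehat{Z}_r(\lambda)|_{\Uztbr}$ is rationally injective. Tensoring with the one-dimensional module $\mu$ is an auto-equivalence of the category of rational $\Uztbr$-modules (with inverse $-\otimes(-\mu)$), so it preserves rational injectivity. Hence $\widehat{Z}_r(\lambda)|_{\Uztbr} \otimes \mu$ is a rationally injective $\Uztbr$-module.

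Finally, the restriction functor from rational $\Uztgr$-modules to rational $\Uztbr$-modules is exact (it is essentially the forgetful functor), so its right adjoint $\ind_{\Uztbr}^{\Uztgr} = \Hom_{\Uztbr}(\Uztgr,-)$ preserves injective objects. Applying this to the rationally injective module from the previous paragraph, the induced module, which is $\widehat{Z}_r(\lambda) \otimes \widehat{Z}_r'(\mu)$ by Step 1, is rationally injective for $\Uztgr$, as desired. The main technical point that must be verified with care is the tensor identity in the rational Hopf-algebraic setting — ensuring that the isomorphism is well-defined and compatible with the $\Uzo$-grading — but this is standard given the finiteness and freeness of $\Uztgr$ over $\Uztbr$ afforded by the PBW-type bases of \S\ref{subsection:QEAs}.
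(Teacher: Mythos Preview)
Your proposal is correct and follows essentially the same approach the paper intends: the paper gives no proof and simply refers to the classical argument in \cite[II.11.1--11.4]{Jantzen:2003}, which is precisely the tensor-identity-plus-induction-preserves-injectives argument you have written out. One minor caveat worth flagging in the non-cocommutative setting is the \emph{side} on which the tensor identity holds; the version cited elsewhere in the paper (from \cite[Proposition 2.16]{Andersen:1991}) places the $\Uztgr$-module on the right of $\ind$, so you may find it cleaner to run the argument via the coinduction description $\whZr(\lambda)=\Uztgr\otimes_{U_\zeta(TB_r^+)}\lambda$ together with Lemma~\ref{lemma:projectivecoverinjectivehull}(2) and Lemma~\ref{lemma:rationalinjectiveequivalence}, but either route works and yields the same proof as the paper intends.
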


\begin{proposition} \label{proposition:injectivefiltration}
Let $M$ be a finite-dimensional rational $\Uztgr$-module.
\begin{enumalph}
\item $M$ is injective as a $\Uzbr$-module if and only if $M$ admits a filtration by $\Uztgr$-submodules with factors of the form $\whZr(\lambda)$, $\lambda \in X$.
\item $M$ is injective as a $U_\zeta(B_r^+)$-module if and only if $M$ admits a filtration by $\Uztgr$-submodules with factors of the form $\whZr'(\lambda)$, $\lambda \in X$.
\end{enumalph}
\end{proposition}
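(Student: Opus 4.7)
The plan is to model the argument on the classical Frobenius-kernel case treated in \cite[II.11.1--II.11.4]{Jantzen:2003}. I will prove (a) in detail and deduce (b) by applying the involution $\omega$ of \S\ref{subsection:FLkernels}, which interchanges $\Uzbr$ with $U_\zeta(B_r^+)$ and (up to a weight shift) the modules $\whZr(\lambda)$ with modules of the form $\whZr'(\mu)$.

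The ``if'' direction of (a) is immediate. Each $\whZr(\lambda)$ is a rationally injective $\Uztbr$-module by Lemma \ref{lemma:projectivecoverinjectivehull}(1), hence injective as a $\Uzbr$-module by Lemma \ref{lemma:rationalinjectiveequivalence}. A short exact sequence whose outer terms are injective has an injective middle term (the sequence splits), so induction on the length of the given filtration shows that $M$ is $\Uzbr$-injective.

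For the ``only if'' direction I would argue by induction on $\dim M$. Lemmas \ref{lemma:rationalinjectiveequivalence} and \ref{lemma:projectivecoverinjectivehull}(1), combined with Krull--Schmidt, decompose $M|_{\Uztbr} \cong \bigoplus_\mu \whZr(\mu)^{\oplus m_\mu}$. Choose $\lambda$ maximal in the support $\set{\mu : m_\mu \neq 0}$; then $\lambda$ is also a maximal weight of $M$. By maximality, $U_\zeta(U_r^+)^+$ annihilates $M_\lambda$, so $M_\lambda \cong \lambda^{\oplus m_\lambda}$ is a $U_\zeta(TB_r^+)$-submodule of $M$. Frobenius reciprocity for the induction--restriction adjunction then produces a $\Uztgr$-module map
\[
\Phi \colon \whZr(\lambda)^{\oplus m_\lambda} = \Uztgr \otimes_{U_\zeta(TB_r^+)} M_\lambda \longrightarrow M.
\]
Because $\whZr(\lambda) \cong \Uzur \otimes \lambda$ is generated over $\Uzur$ by its $\lambda$-weight space, $\im(\Phi)$ equals the $\whZr(\lambda)$-isotypic summand of $M|_{\Uztbr}$, and a dimension count then forces $\Phi$ to be injective. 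Setting $N = \im(\Phi)$, the exact sequence $0 \to N \to M \to M/N \to 0$ splits over $\Uzbr$ (since $N$ is $\Uzbr$-injective), so $M/N$ is $\Uzbr$-injective and of strictly smaller dimension. By induction $M/N$ admits a $\Uztgr$-filtration with $\whZr$-factors, and lifting this back and concatenating with the obvious filtration of $N \cong \whZr(\lambda)^{\oplus m_\lambda}$ yields the required $\whZr$-filtration of $M$.

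The main obstacle is the injectivity of $\Phi$: one must show that $\Phi$ does not collapse any portion of the $\Uztbr$-socles of the domain summands. The choice of $\lambda$ as a maximal weight of $M$ is essential here, since it confines $M_\lambda$ entirely to the $\whZr(\lambda)$-isotypic summand of $M|_{\Uztbr}$ and makes the dimension count tight. Part (b) then follows by applying $\omega$ to (a), using that $\omega$ interchanges $\Uzur$ and $U_\zeta(U_r^+)$ and carries a $\whZr$-filtration under one Borel into a $\whZr'$-filtration under the other.
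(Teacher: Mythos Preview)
Your proposal is correct and follows exactly the approach the paper intends: the paper does not write out a proof but simply points to \cite[II.11.1--II.11.4]{Jantzen:2003}, and your argument is precisely the quantum transcription of that classical proof (choose a maximal weight $\lambda$, use the tensor--restriction adjunction to produce $\Phi$, verify injectivity of $\Phi$ by confining $M_\lambda$ to the $\whZr(\lambda)$-isotypic summand, and induct on dimension). One small caution for part (b): deducing it literally by twisting with $\omega$ requires identifying ${}^\omega\whZr(\lambda)$ with some $\whZr'(\mu)$, which is not entirely formal since $\whZr$ is a tensor induction while $\whZr'$ is a Hom induction; it is cleaner (and equally in the spirit of Jantzen) to rerun the argument dually, choosing a minimal weight and producing a surjection $M \twoheadrightarrow \whZr'(\lambda)^{\oplus m}$ via Frobenius reciprocity for $\ind_{\Uztbr}^{\Uztgr}$.
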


\subsection{Spectral sequences} \label{subsection:spectralsequences}

In Section \ref{subsection:Gsupportvarieties} we will study a certain spectral sequence \eqref{eq:uzgLHSspecseq}, which is usually constructed as the spectral sequence associated to the composite of two functors, cf. \cite[\S 3]{Ginzburg:1993}. Since the arguments in Section \ref{subsection:Gsupportvarieties} will require a product structure on \eqref{eq:uzgLHSspecseq} that is not apparent from the Grothendieck construction, as well as knowledge of the edge maps, we present here a construction of \eqref{eq:uzgLHSspecseq} that makes these features more apparent. Our construction mirrors \cite[Proposition 1.1]{Friedlander:1986b}.

Let $V$ be a rational $\Uz$-module. There exists a Lyndon--Hochschild--Serre spectral sequence
\begin{equation} \label{eq:uzbLHSspecseq}
{}'E_2^{i,j}(V) = \opH^i(\Uzb//\uzb,\opH^j(\uzb,V)) \Rightarrow \opH^{i+j}(\Uzb,V)
\end{equation}
computing the rational $\Uzb$-cohomology of the $\Uz$-module $V$. It can be constructed as in \cite[I.6.6]{Jantzen:2003}, replacing $k[G]$ there by $k[U_\zeta]$; cf.\ \cite[\S 1.34, 2.17]{Andersen:1991}. If $W$ is another rational $\Uz$-module, then there exists a morphism ${}'E_r(V) \otimes {}'E_r(W) \rightarrow {}'E_r(V \otimes W)$, which on the ${}'E_2$- and ${}'E_\infty$-pages is just the cup product for rational cohomology. The edge map
\[
\opH^j(\Uzb,V) \rightarrow {}'E_2^{0,j}(V) = \opH^j(\uzb,V)^{\Uzb}
\]
of \eqref{eq:uzbLHSspecseq} is just the restriction map.

Let $k[G] = \ind_1^G(k)$ be the coordinate ring of $G$, and let $k[G]^{(1)}$ denote $k[G]$ considered as a $\Uz$-module by pullback along the quantum Frobenius morphism $F_\zeta: \Uz \rightarrow \hy(G)$. Note that the map $a \otimes b \mapsto b \otimes a$ defines a $\Uz$-module isomorphism $k[G]^{(1)} \otimes V \cong V \otimes k[G]^{(1)}$. Also, the map $k[G] \otimes k[G] \cong k[G \times G] \rightarrow k[G]$ induced by the diagonal map $\Delta: G \rightarrow G \times G$ is a homomorphism of $G$-modules. Now, let $W$ be another rational $\Uz$-module. Then there exists a $\Uz$-module homomorphism
\[
\varphi: ( k[G]^{(1)} \otimes V ) \otimes ( k[G]^{(1)} \otimes W ) \cong (k[G] \otimes k[G])^{(1)} \otimes V \otimes W \rightarrow k[G]^{(1)} \otimes (V \otimes W).
\]
Set $E_r(V) = {}'E_r(k[G]^{(1)} \otimes V)$. Then $\varphi$ induces a morphism $E_r(V) \otimes E_r(W) \rightarrow E_r(V \otimes W)$. In particular, $E_r(k)$ is a spectral sequence of algebras, and $E_r(V)$ is a module over $E_r(k)$.

The triviality of $k[G]^{(1)}$ as a module for $\uzb$ implies that
\[
\opH^\bullet(\uzb,k[G]^{(1)} \otimes V) \cong k[G]^{(1)} \otimes \opH^\bullet(\uzb,V)
\]
as $\Uzb$-modules. Then $E_2^{i,j}(V)$ can be rewritten as
\begin{align*}
E_2^{i,j} & \cong \opH^i(\Uzb//\uzb,k[G]^{(1)} \otimes \opH^j(\uzb,V)) \\
& \cong \opH^i(B,k[G] \otimes \opH^j(\uzb,V)) \\
& \cong R^i \ind_B^G \opH^j(\uzb,V) & \text{by \cite[I.4.10]{Jantzen:2003}}.
\end{align*}
Next, note that $k[G]^{(1)} = (\ind_1^G(k))^{(1)} \cong \ind_{\uz}^{\Uz}(k)$ as $\Uz$-modules \cite[Theorem 2.3(ii)]{Andersen:1992}. Then
\begin{align*}
\opH^\bullet(\Uzb,k[G]^{(1)} \otimes V) & \cong \opH^\bullet(\Uzb,\ind_{\uz}^{\Uz}(k) \otimes V) \\
& \cong \opH^\bullet(\Uzb, \ind_{\uz}^{\Uz}(V)) & \text{by \cite[Proposition 2.16]{Andersen:1991}} \\
& \cong \opH^\bullet(\Uzg, \ind_{\uz}^{\Uz}(V)) & \text{cf.\ \cite[II.4.7]{Jantzen:2003}} \\
& \cong \opH^\bullet(\uzg,V) & \text{by the exactness of $\ind_{\uz}^{\Uz}(-).$}
\end{align*}
Thus, the spectral sequence $E_r(k[G]^{(1)} \otimes V)$ can be written as:
\begin{equation} \label{eq:uzgLHSspecseq}
E_2^{i,j}(V) = R^i \ind_B^G \opH^j(\uzb,V) \Rightarrow \opH^{i+j}(\uzg,V).
\end{equation}
Written in this form, the products $E_r(V) \otimes E_r(W) \rightarrow E_r(V \otimes W)$ are induced by the cup products for $\uzg$ and $\uzb$, and the edge map $\opH^j(\uzg,V) \rightarrow E_2^{0,j}(V) = \ind_B^G \opH^j(\uzb,V)$ is induced by Frobenius reciprocity from the restriction map $\opH^\bullet(\uzg,V) \rightarrow \opH^\bullet(\uzb,V)$.

\section{Injectivity criterion for modules with compatible torus action} \label{section:injectivitycriterion}

\subsection{} The main result of this section is the following theorem:

\begin{theorem} \label{theorem:injectivitycriterion}
Let $M$ be a finite-dimensional rational $U_\zeta(TG_r)$-module. Then $M$ is injective for $U_\zeta(G_r)$ if and only if the restriction $M|_{U_\zeta(U_{\alpha,r})}$ is injective for each $\alpha \in \Phi$.
\end{theorem}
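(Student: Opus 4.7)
The plan is to mirror the combinatorial argument of Cline, Parshall, and Scott \cite{Cline:1985}, adapted to the quantum setting using the structural results of Section 2. The ``only if'' direction is immediate: Lemma \ref{lemma:commutationrelations} together with the divided-power PBW bases \eqref{eq:Edivpowerbasis}--\eqref{eq:divpowerbases} shows that $\Uzgr$ is free as a module over each root subalgebra $U_\zeta(U_{\alpha,r})$, so restriction preserves injectivity (equivalently, projectivity, since all of the algebras in sight are finite-dimensional Hopf algebras).

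For the ``if'' direction, I first reduce to the Borel case. Proposition \ref{proposition:injectivefiltration}(a),(b) characterize $\Uzbr$- and $\Uzbp$-injectivity via $\Uztgr$-filtrations with $\whZr$- and $\whZr'$-factors respectively; combining both filtrations with the injectivity of $\whZr(\lambda) \otimes \whZr'(\mu)$ (Lemma \ref{lemma:injectiveZtensor}) and the quantum analog of \cite[\S II.11]{Jantzen:2003} yields $\Uzgr$-injectivity. Lemma \ref{lemma:rationalinjectiveequivalence} further replaces $\Uzbr$-injectivity by $\Uzur$-injectivity, and the positive Borel case is handled symmetrically by applying the involution $\omega$ of \eqref{eq:omega}. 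It therefore suffices to show: if $M|_{U_\zeta(U_{\alpha,r})}$ is injective for every $\alpha \in \Phi^-$, then $M|_{\Uzur}$ is injective.

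I prove this by induction on $m$ along the chain $A_1 \subset A_2 \subset \cdots \subset A_N = \Uzur$, where $A_m := \Uzurm$. The base case $m = 1$ is just the hypothesis applied to $\gamma_1$. For the inductive step, assume $M|_{A_m}$ is injective, hence free by Lemma \ref{lemma:Amfree}, with a weight basis furnished by Lemma \ref{lemma:weightbasis}. The key structural input from Lemma \ref{lemma:commutationrelations} is that $A_m$ is a normal Hopf subalgebra of $A_{m+1}$ with Hopf quotient $A_{m+1}//A_m \cong U_\zeta(U_{\gamma_{m+1},r})$, the isomorphism being induced by the inclusion of the root subalgebra. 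Apply the Lyndon--Hochschild--Serre spectral sequence
\[
E_2^{p,q} = \opH^p(A_{m+1}//A_m,\, \opH^q(A_m, M \otimes M^*)) \Rightarrow \opH^{p+q}(A_{m+1}, M \otimes M^*).
\]
By the standard Hopf diagonal trick, $A_m$-freeness of $M$ yields $A_m$-freeness of $M \otimes M^*$, so the spectral sequence collapses on the row $q = 0$, reducing the problem to showing that $(M \otimes M^*)^{A_m}$ is projective as a module over $A_{m+1}//A_m \cong U_\zeta(U_{\gamma_{m+1},r})$.

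The main obstacle lies in this last step: the hypothesis provides injectivity of $M$ for $U_\zeta(U_{\gamma_{m+1},r})$ acting \emph{as a subalgebra} of $A_{m+1}$, whereas the spectral sequence requires control of $U_\zeta(U_{\gamma_{m+1},r})$ acting \emph{as the quotient} $A_{m+1}//A_m$. These two actions on $M$ are genuinely different, because the subalgebra does not stabilize the subspace $M^{A_m}$: the commutation relations of Lemma \ref{lemma:commutationrelations} show that moving $F_{\gamma_{m+1}}$ past elements of $A_m$ produces ``straightening'' correction terms supported in $A_m$. Bridging the two requires exploiting the $\Uzo$-weight decomposition provided by the torus hypothesis to show that these corrections are controllable on passage to $A_m$-fixed points, and further that the freeness of $M$ over the subalgebra forces the freeness of $(M \otimes M^*)^{A_m}$ over the quotient algebra. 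Making this precise is expected to be the most delicate portion of the argument.
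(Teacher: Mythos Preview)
Your reduction to the Borel case is essentially Theorem \ref{theorem:reductionborel}, and your plan to induct along $A_1 \subset \cdots \subset A_N$ matches the paper's setup for Theorem \ref{theorem:injectiveBorel}. The inductive step, however, has a genuine gap.

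A technical point first: the algebras $A_m = \Uzurm$ are \emph{not} Hopf algebras---recall from \S\ref{subsection:QEAs} that $U_q^-$ itself is not a Hopf subalgebra of $U_q$. They are normal augmented subalgebras of one another, so the LHS spectral sequence of \S\ref{subsection:LHSspecseq} still applies, but your ``standard Hopf diagonal trick'' transferring $A_m$-freeness from $M$ to $M \otimes M^*$ is unjustified. (In fact the detour through $M \otimes M^*$ is unnecessary: since $k$ is the only simple $A_{m+1}$-module, injectivity of $M$ over $A_{m+1}$ is equivalent to $\opH^{>0}(A_{m+1},M)=0$, and one may run the spectral sequence with coefficients in $M$ directly.)

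The real problem is the one you flag but do not resolve: the hypothesis gives freeness of $M$ over $U_\zeta(U_{\gamma_{m+1},r})$ acting as a \emph{subalgebra}, while the collapsed spectral sequence demands injectivity of $M^{A_m}$ over the \emph{quotient} $A_{m+1}//A_m$. There is no mechanism in your outline to bridge this, and the paper does not attempt to. Instead it bypasses the quotient entirely via a combinatorial device (Lemma \ref{lemma:rootorder}): for each $m$ there is a total ordering $\leqslant_m$ on $\R\Phi$ with $\gamma_i >_m 0$ for $i \leq m$ and $\gamma_{m+1} <_m 0$. Choosing from the $A_m$-weight basis of Lemma \ref{lemma:weightbasis} a vector $v$ of $\leqslant_m$-minimal weight forces $F_{\gamma_{m+1}}^{(n)} v = 0$ for all $n \geq 1$; freeness of $M$ over the \emph{subalgebra} $U_\zeta(U_{\gamma_{m+1},r})$ then produces $w$ with $\int_\alpha w = v$, and the PBW factorization $\int_{m+1} = \int_m \int_\alpha$ gives $\int_{m+1} w \neq 0$. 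One splits off the resulting free rank-one $A_{m+1}$-summand via Lemma \ref{lemma:indecomposableinjectives} and inducts on dimension. The ordering $\leqslant_m$ is precisely what allows the subalgebra hypothesis to be used directly; your spectral sequence route lacks any substitute for it.
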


One direction of the theorem is clear: The algebra $\Uzgr$ is flat (in fact, free) as a right module over each root subalgebra $U_\zeta(U_{\alpha,r})$ (apply the triangular decomposition for $U_\zeta$, and the explicit description of the PBW bases for $U_\zeta^+$ and $U_\zeta^-$). Then the injectivity of $M$ for $\Uzgr$ implies the injectivity of $M$ for each $U_\zeta(U_{\alpha,r})$ by \cite[Lemma I.4.3]{Barnes:1985}. This direction of the theorem does not require a compatible $\Uzo$-structure on $M$.

To prove the other direction of the theorem, in Section \ref{subsection:reductionBorel} we reduce the problem to the case of a rational $\Uztgr$-module that is injective over the Borel subalgebras $\Uzbr$ and $U_\zeta(B_r^+)$. Then in Section \ref{subsection:injectiveBorels} we prove that such a module is injective for $\Uzbr$ (resp.\ $U_\zeta(B_r^+)$) if and only if its restriction to each root subalgebra is injective. Our overall strategy is the same as that in \cite{Cline:1985} for the classical algebraic group situation, but extra care must be taken, especially in Section \ref{subsection:injectiveBorels}, owing to the non-cocommutativity of the Hopf algebras under consideration and the complicated relations between root vectors in $\Uz$.

\subsection{Reduction to Borel subalgebras} \label{subsection:reductionBorel}

\begin{theorem} \label{theorem:reductionborel}
Let $M$ be a finite-dimensional rational $\Uztgr$-module. Then $M$ is injective for $U_\zeta(G_r)$ if and only if the restrictions $M|_{U_\zeta(B_r)}$ and $M|_{U_\zeta(B_r^+)}$ are injective.
\end{theorem}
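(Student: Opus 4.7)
Plan: The forward direction is immediate. By the triangular decomposition and the explicit PBW divided-power bases recalled in \S \ref{subsection:QEAs}--\S \ref{subsection:FLkernels}, the algebra $\Uzgr$ is free as a right module over both $\Uzbr$ and $U_\zeta(B_r^+)$. Thus \cite[Lemma I.4.3]{Barnes:1985} shows that injectivity of $M$ for $\Uzgr$ restricts to injectivity over each Borel. This direction makes no use of the $\Uzo$-action on $M$.

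For the converse, suppose both $M|_{\Uzbr}$ and $M|_{U_\zeta(B_r^+)}$ are injective. The plan proceeds in four steps. First, apply Lemma \ref{lemma:rationalinjectiveequivalence} (and its evident analog for $B_r^+$, obtained by transport under $\omega$) to upgrade the hypotheses to rational injectivity over $\Uztbr$ and $U_\zeta(TB_r^+)$. Second, invoke Proposition \ref{proposition:injectivefiltration} to endow $M$ with two filtrations by $\Uztgr$-submodules: one with sections $\whZr(\lambda_i)$ and one with sections $\whZr'(\mu_j)$. Third, prove the central claim: any finite-dimensional rational $\Uztgr$-module admitting both a $\whZr$-filtration and a $\whZr'$-filtration is rationally injective for $\Uztgr$. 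Fourth, apply Lemma \ref{lemma:rationalinjectiveequivalence} once more to identify rational $\Uztgr$-injectivity with $\Uzgr$-injectivity, completing the proof.

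The third step is where the real work lies. The key input is the Ext-vanishing
\[
\Ext^n_{\Uztgr}(\whZr(\lambda), \whZr'(\mu)) = 0 \qquad (n \geq 1,\ \lambda,\mu \in X),
\]
which follows from Frobenius reciprocity for $\whZr'(\mu) = \ind_{\Uztbr}^{\Uztgr} \mu$, yielding $\Ext^n_{\Uztgr}(\whZr(\lambda), \whZr'(\mu)) \cong \Ext^n_{\Uztbr}(\whZr(\lambda), \mu)$, together with the rational $\Uztbr$-projectivity of $\whZr(\lambda)$ given by Lemma \ref{lemma:projectivecoverinjectivehull}(1). A standard long-exact-sequence induction on the length of the $\whZr'$-filtration of $M$ then gives $\Ext^n_{\Uztgr}(\whZr(\lambda), M) = 0$ for all $n \geq 1$ and $\lambda \in X$. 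Combining this acyclicity with the $\whZr$-filtration of $M$, the injectivity of the tensor modules $\whZr(\lambda) \otimes \whZr'(\mu)$ from Lemma \ref{lemma:injectiveZtensor}, and the duality isomorphisms of Lemma \ref{lemma:Zdual}, one concludes rational injectivity of $M$ for $\Uztgr$ by the scheme of \cite[II.11.4]{Jantzen:2003}.

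The main obstacle I anticipate is in Step 3, specifically in justifying the Frobenius reciprocity isomorphism above and in carefully translating the classical $GT$-module argument to the $\Uztgr$-setting without cocommutativity. This requires the rational representation-theoretic infrastructure of \S \ref{subsection:rationalreptheory} — particularly the description of indecomposable injectives, the identification of duals of $\whZr$ and $\whZr'$ via Lemma \ref{lemma:Zdual}, and the equivalence of rational projectivity with rational injectivity from Lemma \ref{lemma:projectivesareinjective} — and demands care in distinguishing left from right adjoint/diagonal actions of the Hopf algebras involved, as flagged in the remark following the construction in \S \ref{subsection:Hopfalgebraactions}.
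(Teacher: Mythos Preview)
Your proposal is correct and, once the reference to ``the scheme of \cite[II.11.4]{Jantzen:2003}'' is unpacked, is essentially the paper's argument: from the two filtrations and Lemma~\ref{lemma:Zdual} one filters $\End_k(M)\cong M\otimes M^*$ by the injective modules $\whZr(\lambda)\otimes\whZr'(\mu)$ of Lemma~\ref{lemma:injectiveZtensor}, and then splits $M$ off from $\End_k(M)\otimes M$ via the evaluation map. The Ext-vanishing computation you highlight and the passage through rational $\Uztgr$-injectivity (your Steps~1 and~4) are unnecessary detours---the paper applies Proposition~\ref{proposition:injectivefiltration} directly to the $\Uzbr$- and $U_\zeta(B_r^+)$-injectivity hypotheses and concludes $\Uzgr$-injectivity without ever invoking $\Ext_{\Uztgr}^n(\whZr(\lambda),\whZr'(\mu))=0$ or Lemma~\ref{lemma:rationalinjectiveequivalence}.
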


\begin{proof}
Suppose that $M$ is injective for $\Uzgr$. Then $M$ is injective for $\Uzbr$ and $U_\zeta(B_r^+)$ by \cite[Lemma I.4.3]{Barnes:1985}, because $\Uzgr$ is free as a right module for either $\Uzbr$ or $U_\zeta(B_r^+)$. Now suppose that $M$ is injective as a module for $\Uzbr$ and as a module for $U_\zeta(B_r^+)$. Then by Lemma \ref{lemma:Zdual} and Proposition \ref{proposition:injectivefiltration}, the $\Uztgr$-module $\End_k(M) \cong M \otimes M^*$ admits a filtration by $\Uztgr$-submodules with factors of the form $\whZr(\lambda) \otimes \whZr'(\mu)$ ($\lambda,\mu \in X$). These factors are injective for $\Uzgr$ by Lemma \ref{lemma:injectiveZtensor}, so $\End_k(M)$ must be an injective (equivalently, projective) $\Uzgr$-module.

Now, since $\End_k(M)$ is projective for $\Uzgr$, then so is $\End_k(M) \otimes M$, cf.\ the proof of Lemma \ref{lemma:projectivesareinjective}. The $\Uzgr$-module embedding $M \cong (k \cdot \id) \otimes M \hookrightarrow \End_k(M) \otimes M$ splits via the $\Uzgr$-module homomorphism $\End_k(M) \otimes M \rightarrow M$, $\varphi \otimes m \mapsto \varphi(m)$, so $M$ is isomorphic to a $\Uzgr$-direct summand of $\End_k(M) \otimes M$. Then $M$ is projective (equivalently, injective) as a $\Uzgr$-module.
\end{proof}

\subsection{Injectivity for Borel subalgebras} \label{subsection:injectiveBorels}

Since we can twist the structure map of any $U_\zeta(B_r^+)$-module $M$ by the automorphism $\omega$ to make it a $U_\zeta(B_r)$-module, to complete the proof of Theorem \ref{theorem:injectivitycriterion} it now suffices to prove the following theorem:

\begin{theorem} \label{theorem:injectiveBorel}
Let $M$ be a rational $\Uztbr$-module. Then $M$ is injective for $\Uzbr$ if and only if $M$ is injective for each root subalgebra $U_\zeta(U_{\alpha,r})$, $\alpha \in \Phi^+$.
\end{theorem}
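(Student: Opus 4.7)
The forward direction follows from \cite[Lemma I.4.3]{Barnes:1985}: by the PBW basis and the triangular decomposition (\S\ref{subsection:QEAs}), $\Uzbr$ is free as a right module over each root subalgebra $U_\zeta(U_{\alpha,r})$, $\alpha \in \Phi^+$, so injectivity descends. For the converse, Lemma \ref{lemma:borelunipotentequivalence} combined with Lemma \ref{lemma:Amfree} reduces the problem to showing that $M$ is free as a $\Uzur$-module.

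The strategy is to induct on $m$ along the chain $A_1 \subset A_2 \subset \cdots \subset A_N = \Uzur$ from \S\ref{subsection:FLkernels}, where $A_m = \Uzurm$, proving at each step that $M|_{A_m}$ is free. The base case $m=1$ is the hypothesis, since $A_1 = U_\zeta(U_{\gamma_1,r})$ is itself a root subalgebra. For the inductive step, the essential structural input (end of \S\ref{subsection:FLkernels}, via Lemma \ref{lemma:commutationrelations}) is that $A_{m-1}$ is a normal subalgebra of $A_m$, with quotient algebra identified by PBW with the root subalgebra $V_m := U_\zeta(U_{\gamma_m,r})$. Normality yields a $V_m$-action on $M^{A_{m-1}}$ with $M^{A_m} = (M^{A_{m-1}})^{V_m}$. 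Since $A_m$ is a local Frobenius algebra with one-dimensional socle (cf.\ \S\ref{subsection:ordinaryreptheory}), $A_m$-freeness of $M$ is equivalent to $\dim M^{A_m} = \dim M/\dim A_m$; combined with the inductive hypothesis $\dim M^{A_{m-1}} = \dim M/\dim A_{m-1}$, this reduces the step to verifying that $M^{A_{m-1}}$ is a free $V_m$-module.

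To establish the $V_m$-freeness of $M^{A_{m-1}}$, I would exploit the compatible $\Uzo$-action in two ways. By Lemma \ref{lemma:weightbasis}, pick an $A_{m-1}$-basis $\set{v_1,\ldots,v_n}$ of $M$ of $\Uzo$-weight vectors, producing the weight basis $\set{\int_{m-1} v_i}$ of $M^{A_{m-1}}$; and apply Lemma \ref{lemma:weightbasis} to the rational $TV_m$-module $M$ to obtain a weight $V_m$-basis exhibiting $M/A_{m-1}^+ M$ as $V_m$-free. A direct commutation calculation using Lemma \ref{lemma:commutationrelations}(b) — moving $F_{\gamma_m}^{(a)}$ past $\int_{m-1}$ and observing that any correction monomial involving $F_{\gamma_s}^{(c)}$ ($i<s<m$, $c\geq 1$) is killed on the right by the factor $F_{\gamma_s}^{(p^r\ell-1)}$ in $\int_{m-1}$ via $F_{\gamma_s}^{(c)}F_{\gamma_s}^{(p^r\ell-1)}=0$ in $A_m$ — then shows that the $V_m$-action on $M^{A_{m-1}}$ is intertwined with the $V_m$-action on $M/A_{m-1}^+ M$ by the Frobenius map $\phi\colon M/A_{m-1}^+ M \to M^{A_{m-1}}$, $[x]\mapsto \int_{m-1}x$, up to multiplication by a scalar $q^{c(a)}$ depending only on $a$ and the fixed inner products $(\gamma_i,\gamma_m)$. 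This scalar discrepancy is absorbed by a $\Uzo$-equivariant rescaling of $\phi$ on each weight space, giving a genuine $V_m$-module isomorphism $M/A_{m-1}^+ M \cong M^{A_{m-1}}$ and so the required $V_m$-freeness; a final dimension count closes the induction.

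The principal obstacle is carrying out the commutation analysis: establishing cleanly that all correction terms from Lemma \ref{lemma:commutationrelations} either vanish (by the divided-power relations in $A_m$) or can be absorbed into a $\Uzo$-equivariant scalar twist, and confirming that $M/A_{m-1}^+ M$ is in fact $V_m$-free. Unlike the classical cocommutative situation of \cite{Cline:1985}, neither the Frobenius identification $\phi$ nor the passage from $V_m$-freeness of $M$ to that of its coinvariants is automatic, and the compatible $\Uzo$-action is indispensable both for selecting the requisite weight bases (Lemma \ref{lemma:weightbasis}) and for tracking and absorbing the Nakayama-type scalar twist.
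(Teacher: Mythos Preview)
Your inductive setup along the chain $A_1 \subset \cdots \subset A_N$ is the right one, and your observation that the step reduces to showing $M^{A_{m-1}}$ (equivalently $M/A_{m-1}^+M$) is $V_m$-free is correct. But this is where the argument becomes circular. For a local Frobenius algebra, freeness is detected by the dimension of the coinvariants; computing these gives
\[
\bigl(M/A_{m-1}^+M\bigr)_{V_m} \;=\; M/\bigl(A_{m-1}^+M + V_m^+M\bigr) \;=\; M/A_m^+M,
\]
so ``$M/A_{m-1}^+M$ is $V_m$-free'' is \emph{equivalent} to ``$M$ is $A_m$-free'', which is exactly what you are trying to prove. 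Your appeal to Lemma~\ref{lemma:weightbasis} for the $TV_m$-module $M$ does not help: a weight $V_m$-basis $\{w_j\}$ of $M$ has $\dim M/\dim V_m$ elements, whereas $\dim(M/A_{m-1}^+M)=\dim M/\dim A_{m-1}$, so the images $[w_j]$ cannot possibly be a $V_m$-basis of the quotient. Your Frobenius map $\phi$ and the commutation analysis (which is essentially correct --- a weight argument does force $F_{\gamma_m}^{(a)}\!\int_{m-1}$ to be a nonzero scalar multiple of $\int_{m-1}F_{\gamma_m}^{(a)}$) only establish that $M/A_{m-1}^+M$ and $M^{A_{m-1}}$ are isomorphic as twisted $V_m$-modules; this moves the problem around without breaking the circle.

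The paper supplies the missing idea via Lemma~\ref{lemma:rootorder}: for each $m$ there is a total order $\preceq$ on $\R\Phi$ with $\gamma_i\succ 0$ for $i\le m$ and $\gamma_{m+1}\prec 0$. Choosing $v$ in a weight $A_m$-basis of $M$ of \emph{minimal} weight for $\preceq$ forces $F_{\gamma_{m+1}}^{(n)}v=0$ for all $n\ge 1$; then $V_{m+1}$-freeness of $M$ produces $w$ with $\int_{\gamma_{m+1}}w=v$, whence $\int_{m+1}w_\mu\ne 0$ for some weight component $w_\mu$. This yields a free rank-one $A_{m+1}$-summand $A_{m+1}w_\mu$, which splits off as an injective rational $TA_{m+1}$-module, and one finishes by a second induction on $\dim M$. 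It is precisely this combinatorial input on root orderings --- absent from your plan --- that converts the separate freeness over $A_m$ and over $V_{m+1}$ into freeness over $A_{m+1}$.
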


We will actually show that $M$ is injective for $\Uzur$, but this is equivalent to injectivity for $\Uzbr$ by Lemma \ref{lemma:borelunipotentequivalence}. The proof is by induction, using the algebras $A_m := \Uzurm$ ($1 \leq m \leq N$) defined in Section \ref{subsection:FLkernels}. The key to the induction argument is the fact that the convex ordering $\set{\gamma_1,\ldots,\gamma_N}$ on $\Phi^+$ defined in Section \ref{subsection:QEAs} is compatible with a sequence of total orderings $\leqslant_m$ on the vector space $\R \Phi$.

\begin{lemma} \label{lemma:rootorder} \textup{(cf.\ \cite[Lemma 3.1]{Cline:1985})}
For each $0 \leq m < N$, there exists an ordering $\leqslant_m$ on the Euclidean space $\R \Phi$ such that $\gamma_i >_m 0$ for $i \leq m$, and $\gamma_{i} <_m 0$ for $i > m$.
\end{lemma}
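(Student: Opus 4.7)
The plan is to build $\leqslant_m$ from a single linear functional determined by the reduced expression data. Let $v_m = s_{\beta_1} s_{\beta_2} \cdots s_{\beta_m}$ for $0 \leq m < N$ (so $v_0 = 1$), and recall the classical fact that for a reduced expression $v_m = s_{\beta_1}\cdots s_{\beta_m}$ the set of positive roots sent to negative roots by $v_m^{-1}$ is precisely
\[
\Phi(v_m) := \set{\alpha \in \Phi^+ : v_m^{-1}(\alpha) \in \Phi^-} = \set{\gamma_1, \ldots, \gamma_m},
\]
where $\gamma_i = s_{\beta_1}\cdots s_{\beta_{i-1}}(\beta_i)$; see for example Bourbaki or Humphreys on Coxeter groups. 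Equivalently, $v_m^{-1}$ sends $\set{\gamma_{m+1},\ldots,\gamma_N}$ into $\Phi^+$.

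Now fix any regular element $\eta$ in the open fundamental Weyl chamber, so that $(\eta,\alpha) > 0$ for every $\alpha \in \Phi^+$, and set $\eta_m := -v_m(\eta) \in \R\Phi$. Define a partial ordering on $\R\Phi$ by declaring $x >_m 0$ iff $(\eta_m, x) > 0$. For $i \leq m$, using $W$-invariance of $(\cdot,\cdot)$,
\[
(\eta_m,\gamma_i) = -(v_m(\eta),\gamma_i) = -(\eta,v_m^{-1}(\gamma_i)) > 0
\]
because $v_m^{-1}(\gamma_i) \in \Phi^-$; while for $i > m$ the same calculation gives $(\eta_m,\gamma_i) < 0$. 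This verifies the required sign conditions.

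If one prefers a genuine total ordering on $\R\Phi$ compatible with the vector space structure, it suffices to extend the functional $\eta_m$ to a basis $\eta_m = \eta_m^{(1)}, \eta_m^{(2)}, \ldots, \eta_m^{(n)}$ of $(\R\Phi)^*$ and break ties lexicographically; since we have already shown $(\eta_m,\gamma_i) \neq 0$ for every $i$, the first coordinate alone determines the sign of each $\gamma_i$, and the lex-extension affects only vectors in $\ker \eta_m$.

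The only genuine input is the standard identification $\Phi(v_m) = \set{\gamma_1,\ldots,\gamma_m\}$; there is no real obstacle here, and nothing beyond bookkeeping with the convex ordering is needed.
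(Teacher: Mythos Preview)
Your proof is correct and takes a genuinely different route from the paper's. The paper argues by induction on $m$: starting from any ordering with $\Phi^+ = \Phi_0^-$, it shows at each step that $\gamma_{m+1}$ is simple for $\leqslant_m$ (via a case analysis using convexity of the enumeration $\gamma_1,\ldots,\gamma_N$) and then reflects the positive system by $s_{\gamma_{m+1}}$. You instead bypass the induction entirely by invoking the standard description of the inversion set $N(v_m^{-1}) = \{\gamma_1,\ldots,\gamma_m\}$ and producing the ordering in one shot from the linear functional $(\eta_m,\,\cdot\,)$ with $\eta_m = -v_m(\eta)$. Your argument is shorter and more conceptual: it makes transparent that the sought positive system is exactly $v_m(\Phi^-)$, and it avoids the somewhat ad hoc case analysis. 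The paper's approach, on the other hand, keeps the convexity property of the ordering in the foreground and does not appeal to any outside facts about inversion sets; it is more self-contained in that sense. Since the lemma is applied later to pick a weight minimal with respect to $\leqslant_m$ among finitely many weights in $X$, your lexicographic extension to a genuine total order is exactly what is needed, and your observation that no root lies in $\ker(\eta_m,\,\cdot\,)$ ensures the sign conditions are unaffected by the tie-breaking.
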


\begin{proof}
The proof goes by induction on $m$. To start, choose $\leqslant_0$ to be any total ordering on $\R\Phi$ such that $\gamma <_0 0$ for all $\gamma \in \Phi^+$. Now let $m \geq 0$, and assume by way of induction that there exists a total ordering $\leqslant_m$ on $\R\Phi$ satisfying the conditions of the lemma. Let $\Phi_m^+$ (resp.\ $\Phi_m^-$) denote the positive (resp.\ negative) system of roots determined by $\leqslant_m$. Then $\gamma_{m+1} \in \Phi_m^-$. We claim that $\gamma_{m+1}$ is simple with respect to the ordering $\leqslant_m$. Indeed, suppose $\gamma_{m+1} = \alpha_1 + \alpha_2$ for some $\alpha_1,\alpha_2 \in \Phi_m^-$. There are three cases to consider:
\begin{enumerate}
\item $\alpha_1 = \gamma_i \in \Phi_m^-$, $\alpha_2 = \gamma_j \in \Phi_m^-$, with $(m+1) < i < j \leq N$. Then $\gamma_{m+1} = \gamma_i + \gamma_j$, an impossibility because $\set{\gamma_1,\ldots,\gamma_N}$ is a convex ordering of $\Phi^+$, and $(m+1) \notin [i,j]$.
\item $\alpha_1 = -\gamma_i \in \Phi_m^-$, $\alpha_2 = -\gamma_j \in \Phi_m^-$, with $1 \leq i < j \leq m$. Then $-\gamma_{m+1} = \gamma_i + \gamma_j$, an impossibility because the sum of two roots in $\Phi^+$ is never a root in $\Phi^-$.
\item $\alpha_1 = -\gamma_i \in \Phi_m^-$, $\alpha_2 = \gamma_j \in \Phi_m^-$, with $1 \leq i < (m+1) < j \leq N$. Then $\gamma_i + \gamma_{m+1} = \gamma_j$, an impossibility because $\set{\gamma_1,\ldots,\gamma_N}$ is a convex ordering of $\Phi^+$, and $j \notin [i,m+1]$.
\end{enumerate}
So $\gamma_{m+1}$ is simple with respect to the ordering $\leqslant_m$.

Now, the Weyl group $W$ acts transitively on the collection of positive systems in $\Phi$, and to each positive system $S$ of roots in $\Phi$, we can associate a total ordering $\preceq$ on $\R\Phi$ such that the positive roots in $\Phi$ with respect to $\preceq$ are precisely those in $S$ (e.g., enumerate a set of simple roots in $S$, hence an ordered basis for $\R\Phi$, and then take $\preceq$ to be the standard lexicographic ordering on $\R\Phi$ with respect to that basis). Let $\leqslant_{m+1}$ be a total ordering on $\R\Phi$ with associated positive system $\Phi_{m+1}^+:=s_{\gamma_{m+1}}(\Phi_m^+)$. Since $\gamma_{m+1}$ is simple with respect to $\leqslant_m$, it follows that $\Phi_{m+1}^+ \cap \Phi^+ = \set{\gamma_1,\ldots,\gamma_m,\gamma_{m+1}}$. Then $\leqslant_{m+1}$ satisfies the conditions of the lemma. This completes the induction step, and the proof of the lemma.
\end{proof}

\begin{proof}[Proof of Theorem \ref{theorem:injectiveBorel}]
Let $M$ be an injective $\Uzur$-module. Then, since for each $\alpha \in \Phi^+$ the algebra $\Uzur$ is free as a right module over $U_\zeta(U_{\alpha,r})$, the restriction $M|_{U_\zeta(U_{\alpha,r})}$ is injective for each positive root $\alpha \in \Phi^+$ by \cite[Lemma I.4.3]{Barnes:1985}. (This direction of the theorem does not require the action of $\Uzo$.) Now let $M$ be a finite-dimensional rational $\Uztbr$-module for which the restricted modules $M|_{U_\zeta(U_{\alpha,r})}$ are all injective (equivalently, free). We prove, for each $1 \leq m \leq N$, that $M$ is free over $A_m = U_\zeta(U_{r,m})$. Taking $m=N$ then yields the desired result.

By assumption, $M$ is free as a module over $A_1 = U_\zeta(U_{\gamma_1,r})$, so let $m \geq 1$ and assume by induction that $M$ is free as a module over $A_m$. We show that $M$ is free over $A_{m+1}$. By Lemma \ref{lemma:weightbasis}, $M$ admits an $A_m$-basis $S = \set{s_1,\ldots,s_r}$ consisting of weight vectors for $U_\zeta^0$. Set $\alpha = \gamma_{m+1}$. By Lemma \ref{lemma:rootorder}, there exists an ordering $\preceq$ on $\R\Phi$ with $\alpha \prec 0$, and $\gamma_i \succ 0$ if $i \leq m$. Choose $v \in S$ of weight $\lambda$ minimal with respect to $\preceq$. Then $\lambda$ is minimal with respect to $\preceq$ among all the weights for $\Uzo$ in $M$. Since $\alpha \prec 0$, the minimality of $\lambda$ implies that $F_\alpha^{(n)} v = 0$ for all $1 \leq n < p^r\ell$. Set $\int_\alpha = F_\alpha^{(p^r\ell-1)}$. By the freeness of $M$ over $U_\zeta(U_{\alpha,r})$, there exists $w \in M$ with $\int_\alpha w = v$. Now
\begin{equation*}
\textstyle \int_{m+1} = F_{\gamma_1}^{(p^r\ell-1)} \cdots F_{\gamma_m}^{(p^r\ell-1)} F_{\gamma_{m+1}}^{(p^r\ell-1)} = \int_m \int_\alpha,
\end{equation*}
so $\int_{m+1} w = \int_m \int_\alpha w = \int_m v$, and $\int_m v \neq 0$ because $v$ is an $A_m$-basis vector for $M$. Then $\int_{m+1} w \neq 0$.

Now write $w = \sum_{\mu \in X} w_\mu$, the decomposition of $w$ as a finite sum of $\Uzo$-weight vectors. Since $\int_{m+1} w \neq 0$, there exists $\mu \in X$ with $\int_{m+1} w_\mu \neq 0$. It follows that $M' := A_{m+1} . w_\mu$ is free as a module for $A_{m+1}$. By Lemma \ref{lemma:indecomposableinjectives}, $M'$ is injective in the category of rational $TA_{m+1}$-modules, so we can decompose $M$ as a direct sum $M = M' \oplus M''$ for some $TA_{m+1}$-submodule $M''$ of $M$. By induction on dimension, $M''$ is free over $A_{m+1}$. Then $M = M' \oplus M''$ is free for $A_{m+1}$.
\end{proof}

\section{Support varieties for Borel subalgebras} \label{section:Bsupportvarieties}

\subsection{}

Let $H$ be a Hopf algebra, and let $M$ be a left $H$-module. Set $\opH(H,k) = \opH^{2\bullet}(H,k)$, the subring of the cohomology ring $\opH^\bullet(H,k)$ generated by elements of even degree. Then $\opH(H,k)$ is a commutative ring under the cup product \cite[Corollary 4.3]{Mac-Lane:1995}. Suppose that $\opH(H,k)$ is finitely-generated as an algebra. Then $\calV_H(k) := \Maxspec(\opH(H,k))$, the maximal ideal spectrum of $\opH(H,k)$, is an affine variety. Define $J_H(M)$ to be the annihilator for the cup product action of $\opH(H,k)$ on $\Ext_H^\bullet(M,M) \cong \Ext_H^\bullet(k,\Hom_k(M,M)) \cong \Ext_H^\bullet(k,M \otimes M^*)$. Equivalently, $J_H(M)$ is the kernel of the graded algebra homomorphism $\opH(H,k) \rightarrow \opH^{2\bullet}(H,\Hom_k(M,M))$ induced by the natural map $k \rightarrow \Hom_k(M,M)$, $1 \mapsto \id$. Then the support variety $\calV_H(M)$ is defined to be the conical subvariety of $\calV_H(k)$ determined by the ideal $J_H(M)$.

We now turn our attention to studying cohomological support varieties for the Borel subalgebras $\uzb$ and $\uzbp$ of the small quantum group $\uzg$. The automorphism $\omega$ of $\Uz$ restricts to an isomorphism $\uzb \stackrel{\sim}{\rightarrow} \uzbp$, so any results we prove for $\uzbp$ can be immediately translated into results for $\uzb$. Thus,  in order to simplify some calculations, in this section we choose to work exclusively with the positive Borel subalgebra $\uzbp$.

In what follows we make the following assumptions:

\begin{assumption} \label{assumption:ellandp}
Let $k$ be an algebraically closed field. Assume $p = \chr(k)$ to be odd or zero, and to be good for $\Phi$. (For the definition of a good prime, see, e.g., \cite[II.4.22]{Jantzen:2003}.) Assume $\zeta \in k$ to be a primitive $\ell$-th root of unity, with $\ell \in \N$ odd, coprime to three if $\Phi$ has type $G_2$, and $\ell \geq h$, $h$ the Coxeter number of $\Phi$.
\end{assumption}

Under these assumptions, we have the following descriptions for $\opH(\uzg,k)$ and $\opH(\uzbp,k)$:

\begin{theorem} \label{theorem:Bisomorphism} \textup{(cf.\ \cite[Lemma 2.6]{Ginzburg:1993} and \cite[Corollary 4.20]{Drupieski:2009})} We have $\opH^{\textup{odd}}(\uzbp,k) = 0$. There exists a natural $B^+$-algebra isomorphism $\opH(\uzbp,k) \cong S(\fraku^{+*})$, $S(\fraku^{+*})$ the symmetric algebra on the dual space $(\fraku^+)^*$, where the action of $B^+$ on $\opH(\uzbp,k)$ is induced by the adjoint action of $\Uzbp$ on $\opH^\bullet(\uzbp,k)$, and the action of $B^+$ on $S(\fraku^{+*})$ is induced by the coadjoint action of $B^+$ on $\fraku^{+*}$.
\end{theorem}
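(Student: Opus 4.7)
The plan is to pass from $\uzbp$ to its unipotent part $\uzup$ via a Hochschild--Serre collapse, then compute the cohomology of $\uzup$ by filtering it to a skew polynomial ring and extracting torus invariants.

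Stage one is the reduction to $\uzup$. The triangular decomposition realizes $\uzbp$ as a smash product of $\uzo$ with $\uzup$, so there is a Lyndon--Hochschild--Serre spectral sequence $\opH^i(\uzo, \opH^j(\uzup, k)) \Rightarrow \opH^{i+j}(\uzbp, k)$. Because $p \nmid \ell$, the subalgebra $\uzo \cong k[(\Z/\ell\Z)^{\rank(\g)}]$ is semisimple, so this spectral sequence collapses at $E_2$ and yields a natural isomorphism $\opH^n(\uzbp, k) \cong \opH^n(\uzup, k)^{\uzo}$ equivariant for the adjoint action of $\Uzbp$. Since $\uzbp$ acts trivially on its own cohomology, that action factors through the quantum Frobenius quotient $\Uzbp /\!/ \uzbp \cong \hy(B^+)$, which is where the $B^+$-structure on the answer will come from.

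Stage two is the computation of $\opH^\bullet(\uzup, k)$. Filter $\uzup$ using the PBW order on the monomials $E_{\gamma_1}^{a_1} \cdots E_{\gamma_N}^{a_N}$. Applying Lemma \ref{lemma:commutationrelations} (via $\omega$) identifies the associated graded with the skew complete intersection
\[
\gr \uzup \cong k\langle \ol{E}_{\gamma_1}, \ldots, \ol{E}_{\gamma_N} \rangle \big/ \bigl( \ol{E}_{\gamma_i}^\ell,\ \ol{E}_{\gamma_i}\ol{E}_{\gamma_j} - \zeta^{(\gamma_i,\gamma_j)} \ol{E}_{\gamma_j}\ol{E}_{\gamma_i} : i < j \bigr),
\]
so tensoring the standard length-$\ell$ periodic resolution of $k$ over each truncated factor $k[E_\alpha]/(E_\alpha^\ell)$ assembles into a free resolution of $k$ over $\gr \uzup$ and gives
\[
\opH^\bullet(\gr \uzup, k) \cong \bigotimes_{\alpha \in \Phi^+} \bigl( k[y_\alpha] \otimes \Lambda(z_\alpha) \bigr),
\]
with $\deg z_\alpha = 1$, $\deg y_\alpha = 2$, and $\uzo$-weights $-\alpha$ and $-\ell\alpha$ respectively. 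Under Assumption \ref{assumption:ellandp}, the weight $-\alpha$ has nontrivial image in $X/\ell X$ for every positive root, so passing to $\uzo$-invariants kills every monomial containing a $z_\alpha$, while the $y_\alpha$ survive and leave the polynomial ring $k[y_\alpha : \alpha \in \Phi^+]$, concentrated in even degrees.

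This parity concentration then propagates back through the filtration spectral sequence: its $E_\infty$-page on $\uzo$-invariants is even-degree, which rules out nontrivial differentials and algebra-extension problems. I obtain simultaneously $\opH^{\textup{odd}}(\uzbp, k) = 0$ and a graded algebra isomorphism $\opH(\uzbp, k) \cong k[y_\alpha : \alpha \in \Phi^+]$, and the map $y_\alpha \mapsto E_\alpha^* \in \fraku^{+*}$ furnishes an isomorphism onto $S(\fraku^{+*})$. The main obstacle I anticipate is not the cohomology calculation itself---standard skew-Koszul---but rather the verification of $B^+$-equivariance (as opposed to mere $\uzo$-equivariance): one must check that the adjoint action on the cocycles produced by the skew-Koszul resolution corresponds, after the quantum Frobenius rescaling of weights from $-\ell\alpha$ to $-\alpha$, to the coadjoint $B^+$-action on $\fraku^{+*}$, which requires careful bookkeeping of weights and of the identifications made in the collapse of the Lyndon--Hochschild--Serre spectral sequence.
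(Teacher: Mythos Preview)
The paper does not supply its own proof of this theorem; it simply quotes the result from \cite[Lemma 2.6]{Ginzburg:1993} and \cite[Corollary 4.20]{Drupieski:2009}. Your sketch is correct and is essentially the argument found in those references: reduce to $\uzup$ by the collapse of the Lyndon--Hochschild--Serre spectral sequence for the semisimple quotient $\uzo$, pass to the associated graded for the De~Concini--Kac (PBW) filtration to obtain a quantum complete intersection, compute its cohomology as $\Lambda(z_\alpha)\otimes k[y_\alpha]$, and then use the parity of the $\uzo$-invariant part to force degeneration of the May spectral sequence and to resolve the multiplicative extension problem.

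One small clarification on your degeneration step: you should take $\uzo$-invariants \emph{before} arguing about differentials, not at $E_\infty$. The PBW filtration is $\uzo$-stable, so the May spectral sequence is a spectral sequence of $\uzo$-modules; taking invariants termwise gives a spectral sequence converging to $\opH^\bullet(\uzup,k)^{\uzo}=\opH^\bullet(\uzbp,k)$ whose $E_1$-page is already the even-degree polynomial ring $k[y_\alpha]$, and then the parity argument goes through cleanly. Your instinct that the $B^+$-equivariance is the delicate point is exactly right: in the cited references this is handled by observing that the adjoint $\Uzbp$-action is trivial on $\uzbp$-cohomology (so it factors through $\hy(B^+)$), identifying the degree-two piece $\opH^2(\uzbp,k)$ with $\fraku^{+*}$ as a $B^+$-module via an explicit weight comparison, and then noting that the algebra isomorphism is generated in degree two.
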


\begin{theorem} \label{theorem:Gisomorphism} \textup{(cf.\ \cite[Theorem 3]{Ginzburg:1993} and \cite[Corollary 4.23]{Drupieski:2009})} We have $\opH^{\textup{odd}}(\uzg,k) = 0$. There exists a natural graded $G$-algebra isomorphism $\opH(\uzg,k) \cong k[\calN]$, $k[\calN]$ the coordinate ring of the nullcone $\calN \subset \g$, where the action of $G$ on $\opH(\uzg,k)$ is induced by the adjoint action of $\Uz$ on $\opH^\bullet(\uzg,k)$, and the action of $G$ on $k[\calN]$ is induced by the usual adjoint action of $G$ on $\calN \subset \g$.
\end{theorem}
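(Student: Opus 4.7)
The plan is to run the Lyndon--Hochschild--Serre spectral sequence \eqref{eq:uzgLHSspecseq} with $V = k$,
\[
E_2^{i,j} = R^i \ind_B^G \opH^j(\uzb,k) \;\Longrightarrow\; \opH^{i+j}(\uzg,k),
\]
feed in the analogue of Theorem \ref{theorem:Bisomorphism} for the negative Borel (obtained from $\uzbp$ by twisting with the automorphism $\omega$), and show that the sequence collapses. The Borel input gives $\opH^{\textup{odd}}(\uzb,k) = 0$ and a $B$-algebra isomorphism $\opH(\uzb,k) \cong S(\fraku^*)$. This kills every column with $j$ odd and identifies $E_2^{i,2n} = R^i\ind_B^G S^n(\fraku^*)$.

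The central step is then the geometric vanishing
\[
R^i \ind_B^G S^n(\fraku^*) = 0 \quad (i > 0), \qquad \ind_B^G S(\fraku^*) \cong k[\calN],
\]
which holds under Assumption \ref{assumption:ellandp}. In characteristic zero it follows from Grauert--Riemenschneider applied to the Springer resolution $\pi : G \times^B \fraku \to \calN$, together with the normality of $\calN$, which identifies $\pi_* \calO$ with $k[\calN]$. In positive good characteristic it follows from the Andersen--Jantzen vanishing theorem for line bundles on $G/B$ twisted by symmetric powers of $\fraku^*$, combined with the known normality of $\calN$ in good characteristic. Given this, the spectral sequence collapses onto the column $i = 0$, and the edge map yields
\[
\opH^\bullet(\uzg,k) \;\xrightarrow{\sim}\; \ind_B^G \opH^\bullet(\uzb,k) \cong \ind_B^G S(\fraku^*) \cong k[\calN].
\]
Vanishing of odd cohomology for $\uzb$ transports to $\uzg$ by the same collapse.

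That this is an algebra isomorphism follows from the multiplicative structure on the spectral sequence constructed in Section \ref{subsection:spectralsequences}: the edge map on the $E_\infty$-page is a ring homomorphism, and both Theorem \ref{theorem:Bisomorphism} and the identification $\ind_B^G S(\fraku^*) \cong k[\calN]$ are algebra isomorphisms. The $G$-equivariance is baked into the construction, since the entire spectral sequence $E_r(k) = {}'E_r(k[G]^{(1)})$ is assembled from $\Uz$-modules whose $k[G]^{(1)}$-factor carries a $G$-action through the quantum Frobenius $F_\zeta : \Uz \to \hy(G)$. The abutment inherits the adjoint $G$-action on $\opH^\bullet(\uzg,k)$, and the target $k[\calN]$ carries the usual adjoint $G$-action on $\calN \subset \g$.

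The hard part will be the higher-induction vanishing $R^{>0}\ind_B^G S^n(\fraku^*) = 0$: this is the only genuinely non-formal ingredient and is highly sensitive to $\chr(k)$, failing outside the good-prime range. In positive characteristic its proof rests on Andersen--Jantzen's delicate cohomological analysis of symmetric-power-twisted line bundles on $G/B$. Everything else --- collapse of the spectral sequence, compatibility with the cup product, and the $G$-equivariance --- is formal once this vanishing is in hand.
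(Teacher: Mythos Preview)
The paper does not give its own proof of this theorem; it is quoted from the literature with the citation to \cite{Ginzburg:1993} and \cite{Drupieski:2009}. Your sketch faithfully reproduces the argument in those references: run the spectral sequence \eqref{eq:uzgLHSspecseq} with $V=k$, feed in the Borel computation of Theorem~\ref{theorem:Bisomorphism}, invoke the Andersen--Jantzen (respectively Grauert--Riemenschneider) vanishing $R^{>0}\ind_B^G S^n(\fraku^*)=0$ together with the normality of $\calN$, and read off $k[\calN]$ from the collapsed $E_2$-page via the multiplicative edge map.
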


Theorem \ref{theorem:Bisomorphism} implies that for any $\uzbp$-module $M$, the support variety $\Vbp(M)$ identifies with a conical subvariety of $\fraku^+ = \Lie(U^+)$, and if $M$ lifts to a $\Uzbp$-module, then $\Vbp(M)$ is even a $B^+$-stable subvariety of $\fraku^+$. Similarly, Theorem \ref{theorem:Gisomorphism} implies that for any $\uzg$-module (resp.\ $\Uz$-module) $M$, the support variety $\Vg(M)$ identifies with a conical ($G$-stable) subvariety of the nullcone $\calN$.

In this section we generalize the results of \cite{Friedlander:1987} to show that the root vector $e_\alpha \in \fraku^+$ is contained in the support variety $\Vbp(M)$ of the $\uzbp$-module $M$ if and only if $M$ is not projective (equivalently, injective) for the root subalgebra  $u_\zeta(e_\alpha)$. The proof is inductive in nature, and utilizes the Lyndon--Hochschild--Serre spectral sequence \eqref{eq:LHSspecseq} in order to show that the action of the coordinate function $x_\alpha \in S(\fraku^{+*}) \cong \opH(\uzbp,k)$ on certain cohomology groups is nilpotent. Since most of the algebras under consideration in the induction argument are not Hopf subalgebras of $\uzbp$ but are only one-sided coideal subalgebras, we must perform a number of technical calculations that would otherwise be unnecessary when dealing with Hopf algebras. Most of these technical calculations are contained in Section \ref{subsection:injectivityrootsubalgebras}.

\subsection{Cohomology products} \label{subsection:cohomologyproducts}

Let $\Lambda$ and $\Lambda'$ be algebras over $k$. Let $V$ and $W$ be left $\Lambda$-modules, and let $V'$ and $W'$ be left $\Lambda'$-modules. Set $\Omega = \Lambda \otimes \Lambda'$. Recall that the \emph{wedge product} is a family of $k$-bilinear maps
\begin{equation} \label{eq:wedgeproduct}
\vee: \Ext_\Lambda^n(V,W) \otimes \Ext_{\Lambda'}^m(V',W') \rightarrow \Ext_\Omega^{n+m}(V \otimes V',W \otimes W').
\end{equation}
It is defined as follows: Take projective resolutions $X \rightarrow V$ and $X' \rightarrow V'$ by $\Lambda$- and $\Lambda'$-modules, respectively. Then, for each $n,m \in \N$, $X_n \otimes X_m'$ is projective for $\Omega$, and by the K\"{u}nneth Theorem, $X \otimes X'$ is an $\Omega$-projective resolution of $V \otimes V'$. Given $f \in \Hom_\Lambda(X,W)$ and $g \in \Hom_{\Lambda'}(X',W')$, define $f \vee g \in \Hom_\Omega(X \otimes X', W \otimes W')$ by $(f \vee g)(x \otimes x') = f(x) \otimes g(x')$. Then \eqref{eq:wedgeproduct} is the map in cohomology induced by $\vee: \Hom_\Lambda(X,W) \otimes \Hom_{\Lambda'}(X',W') \rightarrow \Hom_\Omega(X \otimes X',W \otimes W')$.

Suppose $\Lambda \stackrel{\varepsilon}{\rightarrow} k$ and $\Lambda' \stackrel{\varepsilon}{\rightarrow} k$ are augmented algebras over $k$, and that $V = V' = k$. Then we could take $X = \upB(\Lambda)$ and $X' = \upB(\Lambda')$, the left bar resolutions for $\Lambda$ and $\Lambda'$, respectively. Since $\upB(\Lambda) \otimes \upB(\Lambda')$ and $\upB(\Omega) = \upB(\Lambda \otimes \Lambda')$ are both $\Omega$-projective resolutions of $k \cong k \otimes k$, there exists an $\Omega$-module chain map $\varphi: \upB(\Omega) \rightarrow \upB(\Lambda) \otimes \upB(\Lambda')$, unique up to homotopy, lifting the identity $k \rightarrow k$. An explicit choice for $\varphi$ is given by the following formula (cf.\ \cite[XI.7]{Cartan:1999}):
\[ \varphi([\lambda_1 \otimes \lambda_1',\ldots,\lambda_n \otimes \lambda_n']) = \sum_{i=0}^n [\lambda_1,\ldots,\lambda_i]\varepsilon(a_{i+1} \cdots a_n) \otimes \lambda_1' \cdots \lambda_i'[\lambda_{i+1}',\ldots,\lambda_n']. \]

Now let $H$ be a Hopf algebra with comultiplication $\Delta: H \rightarrow H \otimes H$, and let $V$ and $W$ be left $H$-modules. Then the cup product
\begin{equation} \label{eq:Hcupproduct}
\cup: \opH^n(H,V) \otimes \opH^m(H,W) \rightarrow \opH^{n+m}(H,V \otimes W)
\end{equation}
is the composite of the wedge product $\vee: \Ext_H^n(k,V) \otimes \Ext_H^m(k,W) \rightarrow \Ext_{H \otimes H}^{n+m}(k, V \otimes W)$ with the map $\Ext_{H \otimes H}^\bullet(k,V \otimes W) \rightarrow \Ext_H^\bullet(k,V \otimes W)$ induced by $\Delta$. If $\zeta \in \opH^n(H,V)$ and $\eta \in \opH^m(H,W)$ are represented by cocycles $f \in \Hom_H(\upB_n(H),V)$ and $g \in \Hom_H(\upB_m(H),W)$, then the cup product $\zeta \cup \eta$ is the cohomology class of the cocycle $f \cup g:= f \vee g \circ \varphi \circ \Delta \in \Hom_H(\upB_{n+m}(H),V \otimes W)$. Explicitly, writing $\Delta(h_i) = h_i^{(1)} \otimes h_i^{(2)}$ and using the fact that $(\varepsilon \otimes 1) \circ \Delta = \id$,
\begin{equation} \label{eq:cupproductcocycle}
(f \cup g)([h_1,\ldots,h_{n+m}]) = f([h_1^{(1)},\ldots,h_n^{(1)}]) \otimes h_1^{(2)} \cdots h_n^{(2)}g([h_{n+1},\ldots,h_{n+m}]),
\end{equation}
In particular, if $W$ has trivial $H$-action, then \eqref{eq:cupproductcocycle} reduces to
\begin{equation} \label{eq:Yonedaformula}
(f \cup g)([h_1,\ldots,h_{n+m}]) = f([h_1,\ldots,h_n]) \otimes g([h_{n+1},\ldots,h_{n+m}]).
\end{equation}

Suppose $A \subseteq H$ is a left coideal subalgebra, that is, $A$ is a subalgebra of $H$ and $\Delta(A) \subseteq H \otimes A$. Then, given an $A$-module $W$, we have the composite
\begin{equation}
\Delta^* \circ \vee : \opH^n(H,k) \otimes \opH^m(A,W) \rightarrow \Ext_{H \otimes A}^{n+m}(k,k \otimes W) \rightarrow \opH^{n+m}(A,W).
\end{equation}
We call this the cup product action of $\opH^\bullet(H,k)$ on $\opH^\bullet(A,W)$. By abuse of notation we also denote it by the symbol $\cup$. Then $\cup$ admits a description at the level of chain complexes by exactly the same formula as \eqref{eq:cupproductcocycle}, interpreting the $h_i$ now as elements of $A$. By \cite[Theorem VIII.4.1]{Mac-Lane:1995}, if $\zeta \in \opH^n(H,k)$ and $\eta \in \opH^m(A,W)$, then $\zeta \cup \eta = (-1)^{nm} \eta \circ \res_A^H(\zeta)$, where $\res_A^H(\zeta)$ denotes the image of $\zeta$ under the cohomological restriction map $\opH^\bullet(H,k) \rightarrow \opH^\bullet(A,k)$, and where
\begin{equation} \label{eq:Yonedacomposition}
\circ: \Ext_A^m(k,W) \otimes \Ext_A^n(k,k) \rightarrow \Ext_A^{n+m}(k,W)
\end{equation}
denotes the Yoneda composition of extensions. Of course, the Yoneda product \eqref{eq:Yonedacomposition} depends only on the ring structure of $A$ (and the $A$-module structure of $W$), so the cup product action of $\opH^\bullet(H,k)$ on $\opH^\bullet(A,W)$ is independent of the particular comultiplication map for $H$.

\subsection{\texorpdfstring{Left coideal subalgebras in $U_q^+$}{Left coideal subalgebras in U+}}

Let $w \in W$, and suppose that the reduced expression for $w_0$ chosen in \S \ref{subsection:QEAs} begins with a reduced expression for $w$, that is, $w_0 = w w'$ with $\ell(w_0) = \ell(w) + \ell(w')$. Here $\ell: W \rightarrow \N$ denotes the usual length function on $W$. Set $m = \ell(w)$. Then the algebra $U_q^+[w]$ defined in \cite[\S 8.24]{Jantzen:1996} is the subalgebra of $U_q^+$ generated by the root vectors $E_{\gamma_1},E_{\gamma_2},\ldots,E_{\gamma_m}$. Heckenberger and Schneider \cite{Heckenberger:2009} have shown that every right coideal subalgebra in $U_q(B^+) = U_q^0 U_q^+$ containing $U_q^0$ has the form $U_q^0 U_q^+[w]$ for some $w \in W$. (Right coideals are defined by the condition $\Delta(A) \subseteq A \otimes H$.) To each $w \in W$ we can also associate a left coideal subalgebra of $U_q(B^+)$, namely, the algebra $T_w(U_q^+[w'])$.

\begin{lemma} \label{lemma:leftcoidealsubalgebras}
Let $w \in W$. Write $w_0 = ww'$ with $\ell(w_0) = \ell(w) + \ell(w')$. Then $T_w(U_q^+[w'])$ is a left coideal subalgebra of $U_q(B^+)$.
\end{lemma}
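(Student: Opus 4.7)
The plan is to unwind $T_w(U_q^+[w'])$ into an explicit subalgebra generated by specific PBW root vectors, reduce the left coideal condition to checking it on those generators, and then verify the condition using the known shape of the comultiplication. Setting $m = \ell(w)$, the hypothesis $\ell(w_0) = \ell(w) + \ell(w')$ combined with our fixed reduced expression for $w_0$ starting with one for $w$ forces $w' = s_{\beta_{m+1}} \cdots s_{\beta_N}$. By definition $U_q^+[w']$ is generated by $E_j' := T_{s_{\beta_{m+1}} \cdots s_{\beta_{j-1}}}(E_{\beta_j})$ for $m+1 \leq j \leq N$, and the braid relations yield $T_w(E_j') = T_{w_j}(E_{\beta_j}) = E_{\gamma_j}$. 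Hence $T_w(U_q^+[w'])$ is the subalgebra of $U_q^+$ generated by the ``tail'' $\set{E_{\gamma_{m+1}}, \ldots, E_{\gamma_N}}$ of the convex ordering of $\Phi^+$.

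Since $\Delta$ is an algebra homomorphism and $U_q(B^+) \otimes T_w(U_q^+[w'])$ is a subalgebra of $U_q(B^+) \otimes U_q(B^+)$, it suffices to verify the inclusion $\Delta(E_{\gamma_i}) \in U_q(B^+) \otimes T_w(U_q^+[w'])$ on each generator $E_{\gamma_i}$ with $i \geq m+1$. The key input I would rely on is the following coproduct formula, which I claim holds for every $1 \leq i \leq N$:
$$\Delta(E_{\gamma_i}) \in U_q(B^+) \otimes \subgrp{E_{\gamma_j} : j \geq i}.$$
Granted this, for $i \geq m+1$ every index $j \geq i$ also satisfies $j \geq m+1$, so $\subgrp{E_{\gamma_j} : j \geq i} \subseteq T_w(U_q^+[w'])$, and the desired inclusion holds on each generator.

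The hard part will be establishing the displayed coproduct formula. I would attack it by downward induction on $i$. The base case $i = N$ is clean: since $-w_0$ permutes the simple roots, $\gamma_N = -w_0(\beta_N) \in \Pi$ is simple, so $\Delta(E_{\gamma_N}) = E_{\gamma_N} \otimes 1 + K_{\gamma_N} \otimes E_{\gamma_N}$ and the formula is trivially satisfied. For the inductive step $i < N$, one uses the braid identity $E_{\gamma_{i+1}} = T_{w_i}(T_{s_{\beta_i}}(E_{\beta_{i+1}}))$ to express $E_{\gamma_i}$ in a way that exposes higher-indexed root vectors, applies $\Delta$, invokes the inductive hypothesis, and then straightens the resulting right tensor factors into ordered PBW monomials via Lemma \ref{lemma:commutationrelations}; a careful weight/index bookkeeping confirms that only monomials in $\set{E_{\gamma_j} : j \geq i}$ survive on the right. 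Alternatively, one could invoke the Heckenberger--Schneider classification of coideal subalgebras of $U_q(B^+)$ together with the observation that $U_q^+$ is itself a left coideal of $U_q(B^+)$, transferring the known right coideal property of $U_q^0 U_q^+[w]$ into the desired left coideal statement for $T_w(U_q^+[w'])$ by intersecting the right tensor factor with $U_q^+$.
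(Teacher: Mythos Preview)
Your identification of $T_w(U_q^+[w'])$ with the subalgebra generated by the tail $\{E_{\gamma_{m+1}},\ldots,E_{\gamma_N}\}$ is correct and matches the paper's first step, and reducing the coideal condition to generators is fine. The gap is in your proof of the displayed coproduct formula $\Delta(E_{\gamma_i}) \in U_q(B^+) \otimes \langle E_{\gamma_j}: j \geq i\rangle$. The downward induction you sketch does not actually go through: the braid identity you write down concerns $E_{\gamma_{i+1}}$, not $E_{\gamma_i}$, and in any case there is no purely algebraic relation inside $U_q^+$ that expresses a single $E_{\gamma_i}$ in terms of higher-indexed root vectors, so the inductive hypothesis gives you nothing to apply $\Delta$ to. Lemma~\ref{lemma:commutationrelations} controls products $E_{\gamma_i}E_{\gamma_j}$, not the coproduct of an individual $E_{\gamma_i}$; straightening on the right tensor factor only helps once you already know the right factor lies in $U_q^+$ written in \emph{some} PBW order, which is exactly what is at issue. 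Your alternative via Heckenberger--Schneider is also incomplete: their result gives the \emph{right} coideal property of $U_q^0 U_q^+[w]$, and ``intersecting the right tensor factor with $U_q^+$'' does not convert a right coideal statement into a left one.

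The paper avoids this by not attempting a direct inductive computation of $\Delta(E_{\gamma_i})$ at all. Instead it uses the intrinsic characterization $T_w(U_q^+[w']) = \{u \in U_q^+ : T_w^{-1}(u) \in U_q^+\}$, so that the left coideal condition becomes the statement $(1 \otimes T_w^{-1})\Delta(T_w(U_q^+[w'])) \subseteq U_q(B^+) \otimes U_q^+$. This is then read off from a known compatibility between the braid operators $T_w$ and the comultiplication (specifically \cite[Proposition~C.5(2)]{Andersen:1994}), which is precisely the nontrivial input your induction is trying to reprove from scratch. If you want to salvage your approach, you would need to invoke that compatibility (or an equivalent formula such as Lusztig's $\Delta \circ T_w = (T_w \otimes T_w)\circ \Adbar(\Theta_w)\circ \Delta$ on the appropriate domain) rather than the bare commutation relations.
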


\begin{proof}
Suppose that the reduced expression for $w_0$ chosen in \S \ref{subsection:QEAs} begins with a reduced expression for $w$. Set $m = \ell(w)$. Then $T_w(U_q^+[w'])$ is the subalgebra of $U_q^+$ generated by the root vectors $E_{\gamma_{m+1}},E_{\gamma_{m+2}},\ldots,E_{\gamma_N}$. Since $T_w(U_q^+[w']) \subset U_q^+$, we know that $\Delta(T_w(U_q^+[w'])) \subseteq U_q(B^+) \otimes U_q^+$. Since $T_w(U_q^+[w']) = \set{u \in U_q^+: T_w^{-1}(u) \in U_q^+}$, to prove the lemma it suffices to show that
\begin{equation*} \label{eq:coidealinverse}
(1 \otimes T_w^{-1}) \circ \Delta(T_w(U_q^+[w'])) \subseteq U_q(B^+) \otimes U_q^+.
\end{equation*}
This last claim follows from \cite[Proposition C.5(2)]{Andersen:1994}.
\end{proof}

\begin{corollary} \label{corollary:leftcoideals}
Let $w_0 = s_{\beta_1} \cdots s_{\beta_N}$ be an arbitrary reduced expression for $w_0 \in W$, and let $\set{\gamma_1,\ldots,\gamma_N}$ be the corresponding convex ordreing of $\Phi^+$. Let $E_{\gamma_m} \in U_q^+$ be the positive root vector of weight $\gamma_m$ as defined in \S \ref{subsection:QEAs}. Then $\Delta(E_{\gamma_m}) \in V_m \otimes W_m$, where $V_m \subset U_q(B^+)$ is the subalgebra generated by $U_q^0 \cup \set{E_{\gamma_1},\ldots,E_{\gamma_m}}$, and $W_m \subset U_q^+$ is the subalgebra generated by $\set{E_{\gamma_m},\ldots,E_{\gamma_N}}$.
\end{corollary}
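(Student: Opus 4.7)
The plan is to establish $\Delta(E_{\gamma_m}) \in V_m \otimes W_m$ as the intersection of two one-sided containments, one coming from Lemma \ref{lemma:leftcoidealsubalgebras} and the other from the Heckenberger--Schneider classification cited in the paragraph preceding that lemma. First, I set $w = s_{\beta_1}\cdots s_{\beta_{m-1}}$ and $w' = s_{\beta_m}\cdots s_{\beta_N}$, so that $w_0 = ww'$ is reduced and the fixed reduced expression for $w_0$ begins with the chosen reduced expression for $w$. Using the identity $w_j = w\cdot s_{\beta_m}\cdots s_{\beta_{j-1}}$ for $j \geq m$ together with the definition $E_{\gamma_j} = T_{w_j}(E_{\beta_j})$, the subalgebra $T_w(U_q^+[w'])$ is generated by $E_{\gamma_m},\ldots,E_{\gamma_N}$, i.e., it equals $W_m$. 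Lemma \ref{lemma:leftcoidealsubalgebras} then gives $\Delta(W_m) \subseteq U_q(B^+) \otimes W_m$, and in particular $\Delta(E_{\gamma_m}) \in U_q(B^+) \otimes W_m$.

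For the complementary containment, set $v = s_{\beta_1}\cdots s_{\beta_m}$. Then $U_q^+[v]$ is generated by $E_{\gamma_1},\ldots,E_{\gamma_m}$, so $V_m = U_q^0 U_q^+[v]$. By the Heckenberger--Schneider result, $U_q^0 U_q^+[v]$ is a right coideal subalgebra of $U_q(B^+)$, and hence $\Delta(E_{\gamma_m}) \in V_m \otimes U_q(B^+)$.

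To combine the two containments, I would use a PBW basis argument. Fix a basis $\{K_\lambda\}$ for $U_q^0$; then $U_q(B^+)$ has the PBW basis $\{E_{\gamma_1}^{a_1}\cdots E_{\gamma_N}^{a_N} K_\lambda\}$, from which one reads off bases for $V_m$ (the monomials with $a_i = 0$ for $i > m$) and for $W_m$ (the monomials with $a_i = 0$ for $i < m$ and trivial $K$-factor). Writing $\Delta(E_{\gamma_m})$ in the induced tensor-product basis, the first containment forces every right tensor factor to lie in $W_m$ and the second forces every left tensor factor to lie in $V_m$, so $\Delta(E_{\gamma_m}) \in V_m \otimes W_m$.

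The chief obstacle, as I see it, is verifying the asserted PBW description of $W_m$ for an arbitrary reduced expression of $w_0$: the identification $W_m = T_w(U_q^+[w'])$ has to be combined with the fact that $U_q^+[w']$ carries an ordered PBW basis built from the reduced expression of $w'$, and that $T_w$ transports this basis onto the ordered monomials $E_{\gamma_m}^{a_m}\cdots E_{\gamma_N}^{a_N}$; once this is in hand, the intersection argument is purely formal.
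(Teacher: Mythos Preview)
Your argument is correct and matches the paper's own proof: the paper sets $w = s_{\beta_1}\cdots s_{\beta_m}$, $w' = s_{\beta_1}\cdots s_{\beta_{m-1}}$, $w'' = s_{\beta_m}\cdots s_{\beta_N}$, and uses that $E_{\gamma_m}$ lies in both the right coideal $U_q^0 U_q^+[w] = V_m$ (Heckenberger--Schneider) and the left coideal $T_{w'}(U_q^+[w'']) = W_m$ (Lemma~\ref{lemma:leftcoidealsubalgebras}), exactly as you do. Your ``chief obstacle'' is not a genuine obstacle: the intersection $(V_m \otimes U_q(B^+)) \cap (U_q(B^+) \otimes W_m) = V_m \otimes W_m$ holds for any pair of linear subspaces by elementary linear algebra, so no PBW description of $W_m$ is needed for that final step.
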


\begin{proof}
Set $w = s_{\beta_1} \cdots s_{\beta_m}$, $w' = s_{\beta_1} \cdots s_{\beta_{m-1}}$, and $w'' = s_{\beta_m} \cdots s_{\beta_N}$, so that $w_0 = w' w''$. Now use the fact that $E_{\gamma_m} \in U_q^+[w] \cap T_{w'}(U_q^+[w''])$, $U_q^0 U_q^+[w]$ is a right coideal subalgebra of $U_q(B^+)$, and $T_{w'}(U_q^+[w''])$ is a left coideal subalgebra of $U_q^+$.
\end{proof}

Let $\tau$ be the anti-automorphism of $U_q$ defined by $\tau(E_\alpha) = E_\alpha$, $\tau(F_\alpha) = F_\alpha$, and $\tau(K_\alpha) = K_\alpha^{-1}$, $\alpha \in \Pi$. We can twist the Hopf algebra structure maps $(\Delta,S,\varepsilon)$ for $U_q$ by $\tau$ to obtain a new set of Hopf algebra structure maps $(\Deltatau,\Stau,\etau) = ((\tau \otimes \tau) \circ \Delta \circ \tau, \tau \circ S^{-1} \circ \tau, \varepsilon)$ for $U_q$. Considering this new Hopf algebra structure on $U_q$, we get:

\begin{lemma} \label{lemma:twistedleftcoideal}
Let $w \in W$. Write $w_0 = ww'$ with $\ell(w_0) = \ell(w) + \ell(w')$. Then $U_q^+[w]$ is a left coideal subalgebra and $U_q^0 T_w(U_q^+[w'])$ is a right coideal subalgebra for the twisted Hopf algebra structure $(\Deltatau,\Stau,\etau)$ on $U_q(B^+)$.
\end{lemma}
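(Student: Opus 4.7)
The plan is to exploit the identity $\Deltatau=(\tau\otimes\tau)\circ\Delta\circ\tau$ together with the involutivity $\tau^2=\id$ to transport the problem across $\tau$ and then invoke the results already established for the original Hopf structure. Concretely, since $\tau$ preserves $U_q(B^+)$ (being an anti-automorphism that fixes each generator $E_\alpha$ and sends $K_\alpha\mapsto K_\alpha^{-1}$), one checks directly that a subspace $A\subseteq U_q(B^+)$ is a left (resp.\ right) coideal subalgebra under $\Deltatau$ if and only if $\tau(A)$ is a left (resp.\ right) coideal subalgebra under $\Delta$: applying $\tau\otimes\tau$ to $\Deltatau(A)\subseteq U_q(B^+)\otimes A$ converts it into $\Delta(\tau(A))\subseteq U_q(B^+)\otimes\tau(A)$, and conversely. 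The lemma is thus equivalent to the two assertions that $\tau(U_q^+[w])$ is a left coideal and $\tau(U_q^0 T_w(U_q^+[w']))$ is a right coideal of $U_q(B^+)$ with respect to the original $\Delta$.

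The next step is to identify these images in manageable terms. A routine check on the generators of $U_q$ shows that $\tau\circ T_\alpha=T_\alpha^{-1}\circ\tau$ for each simple root $\alpha$, from which one obtains $\tau\circ T_w=T_{w^{-1}}^{-1}\circ\tau$ for arbitrary $w\in W$. Consequently $\tau(E_{\gamma_i})=T_{w_i^{-1}}^{-1}(E_{\beta_i})$, so that $\tau(U_q^+[w])$ is the subalgebra of $U_q^+$ generated by the first $m$ ``inverse'' root vectors, and $\tau(T_w(U_q^+[w']))$ is generated by the last $N-m$ inverse root vectors. Applying $\tau$ to the commutation formulas in Lemma \ref{lemma:commutationrelations} shows that these inverse root vectors satisfy commutation relations of exactly the same shape; and applying $\tau\otimes\tau$ to Corollary \ref{corollary:leftcoideals} shows that the coproducts of these inverse root vectors decompose in the analogous triangular fashion with respect to the ordering on $\Phi^+$.

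With this dictionary in place, the two coideal properties are obtained by repeating, verbatim, the arguments that proved Lemma \ref{lemma:leftcoidealsubalgebras} and its companion (the Heckenberger--Schneider classification cited in the passage immediately preceding Lemma \ref{lemma:leftcoidealsubalgebras}), but using the ``inverse'' root vectors $T_{w_i^{-1}}^{-1}(E_{\beta_i})$ in place of the original $E_{\gamma_i}$. The identity $(1\otimes T_w^{-1})\circ\Delta(T_w(U_q^+[w']))\subseteq U_q(B^+)\otimes U_q^+$ used in the proof of Lemma \ref{lemma:leftcoidealsubalgebras} translates, under $\tau\otimes\tau$, into the corresponding inclusion involving the inverse operators, which is what is needed for the right coideal claim; while the direct Heckenberger--Schneider argument delivers the left coideal claim for $\tau(U_q^+[w])$.

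The main obstacle is merely the bookkeeping in the second paragraph: verifying the identities $\tau\circ T_w=T_{w^{-1}}^{-1}\circ\tau$ and the parallel Levendorskii--Soibelman relations for the inverse root vectors, and confirming that the cited references \cite{Heckenberger:2009} and \cite[Proposition C.5(2)]{Andersen:1994} admit the symmetric formulation in $T_w^{-1}$. Since the inverse braid operators come from the same braid group action of $W$ on $U_q$ and differ from the original operators only by a change of normalization, no new conceptual input is required once the translation across $\tau$ is in hand.
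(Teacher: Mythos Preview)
Your proposal is essentially correct and rests on the same two ingredients the paper uses: transporting across $\tau$ via $\Deltatau = (\tau\otimes\tau)\circ\Delta\circ\tau$, and the identity $\tau\circ T_\alpha\circ\tau = T_\alpha^{-1}$ (which you correctly globalize to $\tau\circ T_w = T_{w^{-1}}^{-1}\circ\tau$). The paper's sketch differs only in how it cashes this out: for $U_q^+[w]$ it cites \cite[Proposition C.4]{Andersen:1994} directly (the companion to the Proposition C.5(2) used in Lemma~\ref{lemma:leftcoidealsubalgebras}), rather than passing to the ``inverse'' root vectors and re-running the Heckenberger--Schneider argument; and for $U_q^0 T_w(U_q^+[w'])$ it says to imitate the \emph{proof} of Lemma~\ref{lemma:leftcoidealsubalgebras} for $\Deltatau$, rather than translating the lemma's \emph{statement} across $\tau$.

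One point of caution in your write-up: your assignment of which existing result handles which claim is a bit tangled. After translating, you need $\tau(U_q^+[w])$ to be a \emph{left} coideal and $U_q^0\,\tau(T_w(U_q^+[w']))$ to be a \emph{right} coideal for the original $\Delta$; but Heckenberger--Schneider classifies \emph{right} coideals and Lemma~\ref{lemma:leftcoidealsubalgebras} produces \emph{left} coideals, so neither applies verbatim to the $\tau$-images in the way you suggest. What actually works is that the companion formulas in \cite{Andersen:1994} for $T_w$ and $T_w^{-1}$ (C.4 versus C.5) already supply both orientations, which is why the paper simply cites C.4 and then says ``imitate Lemma~\ref{lemma:leftcoidealsubalgebras}.'' Your symmetry heuristic (``the inverse braid operators give the same theory'') is morally right and would get you there, but it is cleaner to invoke the relevant proposition directly rather than to re-establish the coideal classification for the $T^{-1}$-root vectors.
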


\begin{proof}[Proof sketch.]
To prove the statement for $U_q^+[w]$, use \cite[Proposition C.4]{Andersen:1994} and the identity $\tau \circ T_\alpha \circ \tau = T_\alpha^{-1}$, $\alpha \in \Pi$. To prove the statement for $U_q^0 T_w(U_q^+[w'])$, imitate the proof of Lemma \ref{lemma:leftcoidealsubalgebras}.
\end{proof}

\begin{corollary} \label{corollary:twistedcoideals}
Let $w_0 = s_{\beta_1} \cdots s_{\beta_N}$ be an arbitrary reduced expression for $w_0 \in W$, and let $\set{\gamma_1,\ldots,\gamma_N}$ be the corresponding convex ordreing of $\Phi^+$. Let $E_{\gamma_m} \in U_q^+$ be the positive root vector of weight $\gamma_m$ as defined in \S \ref{subsection:QEAs}. Then $\Deltatau(E_{\gamma_m}) \in V_m' \otimes W_m'$, where $V_m' \subset U_q(B^+)$ is the subalgebra generated by $U_q^0 \cup \set{E_{\gamma_m},\ldots,E_{\gamma_N}}$, and $W_m' \subset U_q^+$ is the subalgebra generated by $\set{E_{\gamma_1},\ldots,E_{\gamma_m}}$.
\end{corollary}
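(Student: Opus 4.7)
The plan is to mirror the proof of Corollary \ref{corollary:leftcoideals} exactly, but with the roles of the left and right coideal properties swapped to fit the twisted comultiplication $\Deltatau$. As in that earlier proof I would set $w = s_{\beta_1} \cdots s_{\beta_m}$, $w' = s_{\beta_1} \cdots s_{\beta_{m-1}} = w_m$, and $w'' = s_{\beta_m} \cdots s_{\beta_N}$, so that $w_0 = w' w''$ is a reduced factorization with $\ell(w_0) = \ell(w')+\ell(w'')$. The element $E_{\gamma_m}$ lies in the intersection $U_q^+[w] \cap T_{w'}(U_q^+[w''])$: the inclusion into $U_q^+[w]$ is immediate from the definition of $U_q^+[w]$ as the subalgebra generated by $E_{\gamma_1},\ldots,E_{\gamma_m}$, and the inclusion into $T_{w'}(U_q^+[w''])$ follows from $E_{\gamma_m}=T_{w_m}(E_{\beta_m})=T_{w'}(E_{\beta_m})$, noting that $E_{\beta_m}$ is the first PBW generator of $U_q^+[w'']$ relative to the reduced expression $w''=s_{\beta_m}\cdots s_{\beta_N}$.

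Next I would verify the identifications $W_m' = U_q^+[w]$ and $V_m' = U_q^0\, T_{w'}(U_q^+[w''])$. The first is immediate from the definitions. For the second I would unwind the construction of PBW root vectors: applying $T_{w_m}$ to the root vectors of $U_q^+[w'']$ associated to the reduced expression $w''=s_{\beta_m}\cdots s_{\beta_N}$ produces $T_{w_m}(E_{\beta_m}),\ T_{w_m}T_{s_{\beta_m}}(E_{\beta_{m+1}}),\ldots$, and using $w_m\cdot s_{\beta_m}\cdots s_{\beta_{j-1}} = w_j$ one sees these are precisely $E_{\gamma_m},E_{\gamma_{m+1}},\ldots,E_{\gamma_N}$. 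Hence $T_{w_m}(U_q^+[w''])$ is the subalgebra generated by those root vectors, and adjoining $U_q^0$ yields $V_m'$.

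With these identifications in place, Lemma \ref{lemma:twistedleftcoideal} applied with the word $w$ asserts that $W_m' = U_q^+[w]$ is a left coideal subalgebra for $\Deltatau$, while applied with the word $w'$ it asserts that $V_m' = U_q^0\, T_{w'}(U_q^+[w''])$ is a right coideal subalgebra for $\Deltatau$. Thus $\Deltatau(E_{\gamma_m})$ lies simultaneously in $U_q(B^+) \otimes W_m'$ and in $V_m' \otimes U_q(B^+)$. A routine linear algebra argument — extend a $k$-basis of $W_m'$ to a basis of $U_q(B^+)$ and expand $\Deltatau(E_{\gamma_m})$ in the corresponding product basis — shows that the intersection of these two subspaces inside $U_q(B^+)\otimes U_q(B^+)$ equals $V_m' \otimes W_m'$, which gives the desired conclusion. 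The only step that requires any real care is the identification of $T_{w_m}(U_q^+[w''])$ with the subalgebra generated by $E_{\gamma_m},\ldots,E_{\gamma_N}$; this is pure bookkeeping with Lusztig's automorphisms and should be the main (mild) obstacle.
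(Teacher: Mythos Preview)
Your proposal is correct and follows exactly the approach the paper intends: Corollary~\ref{corollary:twistedcoideals} is stated without proof precisely because it is obtained from Lemma~\ref{lemma:twistedleftcoideal} in the same way Corollary~\ref{corollary:leftcoideals} is obtained from Lemma~\ref{lemma:leftcoidealsubalgebras} and the Heckenberger--Schneider result, with the left/right coideal roles interchanged. Your added details (the explicit identification $V_m' = U_q^0\,T_{w_m}(U_q^+[w''])$ via the braid relations, and the basis-expansion argument for the tensor intersection) merely unpack what the paper leaves implicit.
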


\subsection{The Lyndon--Hochschild--Serre spectral sequence} \label{subsection:LHSspecseq}

Let $A$ be an augmented algebra, and let $B$ be a normal subalgebra of $A$. Write $B_\varepsilon$ for the augmentation ideal of $B$, and set $K = AB_\varepsilon$, the two-sided ideal in $A$ generated by $B_\varepsilon$. Then $A//B = A/K$. Assume that $A$ is flat as a right $B$-module. Then for any $A$-module $V$, there exists a unique natural action of $A$ on the cohomology groups $\opH^\bullet(B,V)$ extending the action of $A$ on the space of invariants $V^B$. Moreover, there exists a spectral sequence satisfying
\begin{equation} \label{eq:LHSspecseq}
E_1^{i,j} = E_1^{i,j}(V) = \Hom_{A//B}(\upB_i(A//B),\opH^j(B,V)) \Rightarrow \opH^{i+j}(A,V).
\end{equation}
This is the Lyndon--Hochschild--Serre (LHS) spectral sequence associated to the algebra extension $0 \rightarrow B \rightarrow A \rightarrow A//B \rightarrow 0$ and the $A$-module $V$. Depending on the existence of additional structure on $A$ and $B$, the LHS spectral sequence can be constructed in several equivalent ways, cf.\ \cite[Chapter VIII]{Barnes:1985}. We are interested in a construction due to Hochschild and Serre, described in \cite[Chapter IV]{Barnes:1985}. For this construction we make the additional assumption that $A$ is projective as a right $B$-module. The main points of the construction are summarized below.

Set $C =C^\bullet(A,V) := \Hom_A(\upB_\bullet(A),V)$. Write $\delta$ for the differential on $C$. Define a decreasing filtration $F$ on $C$ by setting $F^0 C^n = C^n$, $F^{n+1} C^n = 0$, and for $0 < p \leq n$,
\[ F^p C^n = \set{f \in C^n(A,V): f([a_1,\ldots,a_n]) = 0 \text{ if any of $a_{n-p+1},\ldots,a_n$ is in $K$}}. \]
(In \cite{Barnes:1985}, this filtration is denoted by $F*$.) Then $F$ makes $C$ a filtered differential graded module, and \eqref{eq:LHSspecseq} is the associated spectral sequence. The identification
\begin{equation} \label{eq:eta}
\eta: E_1^{p,n-p} \stackrel{\sim}{\rightarrow} \Hom_{A//B}(\upB_p(A//B),\opH^{n-p}(B,V))
\end{equation}
is made as follows: Let $f \in F^pC^n$ be such that $\delta(f) \in F^{p+1}C^{n+1}$. Then $f$ represents a relative cocycle $[f] \in E_0^{p,n-p} = F^pC^n/F^{p+1}C^n$. Given $x_1,\ldots,x_p \in A//B$, define $\eta([f])([x_1,\ldots,x_p]) = [\varphi]$, the cohomology class of $\varphi \in \Hom_B(\upB_{n-p}(B),V)$, where $\varphi$ is defined by
\begin{equation} \label{eq:representativecocycle}
\varphi([b_1,\ldots,b_{n-p}]) := f([b_1,\ldots,b_{n-p},x_1,\ldots,x_p]).
\end{equation}
That $\varphi$ is a well-defined cocycle follows from the fact that $f$ represents a relative cocycle in $E_0^{p,n-p}$. That $\eta$ defines an isomorphism follows from \cite[Theorem III.1.5 and Lemma IV.3.1]{Barnes:1985}.

\begin{remark}
Suppose $A$ is a Hopf algebra, and that $B$ is a Hopf subalgebra of $A$. Then the LHS spectral sequence $E_r(k)$ is naturally a spectral sequence of algebras, with products induced by a natural algebra structure on $C(A,k)$, and $E_r(V)$ is naturally a module over $E_r(k)$, with module structure induced by a natural $C(A,k)$-action on $C(A,V)$; see \cite[Theorem IV.3.6]{Barnes:1985}. The filtration $F$ defined here is not compatible with these algebra and module structures, and this is part of the reason for the technical calculations we must conduct in Section \ref{subsection:injectivityrootsubalgebras} (the other reason for the calculations being that we are not dealing with Hopf algebras in Section \ref{subsection:injectivityrootsubalgebras}). Still, the filtration $F$ defined here seems to be the most useful for the purposes of our induction argument.
\end{remark}

\subsection{Injectivity for root subalgebras} \label{subsection:injectivityrootsubalgebras}

Let $\set{x_\alpha: \alpha \in \Phi^+} \subset \fraku^{+*}$ be the dual basis corresponding to the root vector basis $\set{e_\alpha: \alpha \in \Phi^+}$ for $\fraku^+$. Then $\opH(\uzbp,k) \cong S(\fraku^{+*})$ is the polynomial algebra on the degree two generators $x_\alpha$, $\alpha \in \Phi^+$. Before we state the main result of this section, we collect some information on the polynomial generators for $\opH(\uzbp,k)$.

\subsubsection{} \label{subsubsection:one} Let $f \in \Hom_{\uzbp}(\upB_2(\uzbp),k)$ be a cocycle representative for $x_\alpha$. Since $x_\alpha$ is a weight vector of weight $-\ell\alpha$ for $\Uzo$, we may assume that $f$ is a weight vector of weight $-\ell\alpha$ for the adjoint action of $\Uzo$ on $C^2(\uzbp,k)$. Suppose $\alpha$ is a simple root. Then $f$ has support in the subspace of $\upB_2(\uzbp)$ spanned by all vectors of the form $[x_1,x_2]$ with $x_1,x_2 \in \uzo u_\zeta(e_\alpha)$.

\subsubsection{} \label{subsubsection:two} Write $\Phi^+ = \set{\gamma_1,\ldots,\gamma_N}$ with $\gamma_i = w_i(\beta_i)$ as in \S \ref{subsection:QEAs}. Suppose that $\alpha = \gamma_m$. Set $\beta = \beta_m$, and set $w = w_m$. We have $\opH(\uzg,k) \cong k[\calN]$, and the restriction map $\opH(\uzg,k) \rightarrow \opH(\uzbp,k) \cong S(\fraku^{+*})$ is just the restriction of functions. For each $w \in W$, the braid group operator $T_w$ induces an automorphism of $\uzg$, hence an automorphism $(T_w^{-1})^*$ of $\opH(\uzg,k)$. This automorphism maps vectors of weight $\lambda$ for the adjoint action of $\Uzo$ to vectors of weight $w\lambda$. At the level of cochains, $(T_w^{-1})^*$ is induced by the map $C(\uzg,k) \rightarrow C(\uzg,k)$, $f \mapsto f \circ T_w^{-1}$, where we write $T_w^{-1}: \upB(\uzg) \rightarrow \upB(\uzg)$ to denote the evident chain map induced by $T_w^{-1}: \uzg \rightarrow \uzg$.

Lifting the coordinate functions $x_\alpha,x_\beta \in \opH^2(\uzbp,k)$ to $\opH^2(\uzg,k) \cong \g^*$, we get $(T_w^{-1})^*(x_\beta) = x_{w(\beta)} = x_\alpha$. Restricting back to $\uzbp$, we now see that we can choose a cocycle representative $f \in C^2(\uzbp,k)$ for $x_\alpha$ with support in the subspace of $\upB_2(\uzbp)$ spanned by all vectors of the form $[x_1,x_2]$ with $x_1,x_2 \in \uzo u_\zeta(e_\alpha) = T_w(\uzo u_\zeta(e_\beta))$.

\begin{proposition} \label{proposition:nilpotentaction}
Let $V$ be a finite-dimensional $\uzbp$-module. Let $\alpha \in \Phi^+$, and suppose that $V$ is injective (equivalently, projective) for the root subalgebra $u_\zeta(e_\alpha)$. Then under the cup product action of $\opH(\uzbp,k)$ on $\opH^\bullet(\uzbp,V)$, $x_\alpha$ acts nilpotently on $\opH^\bullet(\uzbp,V)$, that is, for each fixed $z \in \opH^\bullet(\uzbp,V)$, $x_\alpha^r \cup z = 0$ for all $r \gg 0$.

\end{proposition}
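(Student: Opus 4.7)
The plan is to set up a Lyndon--Hochschild--Serre spectral sequence of the form constructed in \S\ref{subsection:LHSspecseq}, with $u_\zeta(e_\alpha)$ appearing as the quotient, and to show that $x_\alpha$ is represented in $E_2^{2,0}$ by the polynomial generator of $\opH^2(u_\zeta(e_\alpha),k)$. The hypothesis on $V$ will then be leveraged via the following dichotomy: for $u_\zeta(e_\alpha) \cong k[t]/(t^\ell)$ with $\ell$ coprime to $\chr(k)$, the polynomial generator $y_\alpha \in \opH^2(u_\zeta(e_\alpha),k)$ acts nilpotently on $\opH^\bullet(u_\zeta(e_\alpha),W)$ if and only if $W$ is projective over $u_\zeta(e_\alpha)$. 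I first reduce to $\uzup$: since $\uzbp = \uzo\,\uzup$ with $\uzup$ normal and $\uzo$ semisimple, the LHS spectral sequence collapses to give $\opH^\bullet(\uzbp,V) \cong \opH^\bullet(\uzup,V)^{\uzo}$, compatibly with the cup product action of $x_\alpha$.

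Choose a reduced expression for $w_0$ yielding a convex ordering $\gamma_1 < \gamma_2 < \cdots < \gamma_N$ of $\Phi^+$ with $\gamma_1 = \alpha$, and let $B \subset \uzup$ denote the subalgebra generated by $\{e_{\gamma_2},\ldots,e_{\gamma_N}\}$. The commutation relations of Lemma \ref{lemma:commutationrelations} yield
\[ e_{\gamma_1}\,e_{\gamma_j} - q^{-(\gamma_1,\gamma_j)}\, e_{\gamma_j}\,e_{\gamma_1} \in B^+ \qquad (j \geq 2), \]
from which one readily verifies $B^+\uzup = \uzup B^+$. Thus $B$ is a normal augmented subalgebra of $\uzup$ with $\uzup // B \cong u_\zeta(e_\alpha)$, and by PBW the algebra $\uzup$ is free as a (left or right) $B$-module. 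The LHS spectral sequence of \S\ref{subsection:LHSspecseq} then takes the form
\[ E_2^{i,j}(V) = \opH^i\bigl(u_\zeta(e_\alpha),\,\opH^j(B,V)\bigr) \Rightarrow \opH^{i+j}(\uzup,V). \]

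By the cocycle analysis of \S\ref{subsubsection:one} and \S\ref{subsubsection:two}, one may choose a cocycle $f$ representing $x_\alpha$ with support on pairs $[x_1,x_2]$, $x_1,x_2 \in u_\zeta(e_\alpha)$. Since the section $u_\zeta(e_\alpha) \hookrightarrow \uzup$ satisfies $u_\zeta(e_\alpha) \cap K = 0$ (where $K = B^+\uzup$), the cocycle $f$ lies in the top filtration piece $F^2 C^2$, and via the identification $\eta$ of \eqref{eq:eta} it represents a non-zero scalar multiple of $y_\alpha \in \opH^2(u_\zeta(e_\alpha),k) = E_2^{2,0}$. The cup product action of $\opH^\bullet(\uzup,k)$ on the spectral sequence, implemented via the coideal subalgebra cup product formalism of \S\ref{subsection:cohomologyproducts}, therefore induces on each $E_2^{i,j}$ column the $y_\alpha$-multiplication on $\opH^i(u_\zeta(e_\alpha),\opH^j(B,V))$.

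The main obstacle is to conclude that this column-wise $y_\alpha$-action is nilpotent. By the dichotomy above, it would suffice to show each $\opH^j(B,V)$ is projective over $u_\zeta(e_\alpha)$; this is not immediate from the hypothesis on $V$, because the $u_\zeta(e_\alpha)$-action on $\opH^j(B,V)$ arises through the residual action of $\uzup // B$ rather than by restriction of the $\uzup$-action on $V$. Bridging this gap, using the algebra section $u_\zeta(e_\alpha) \hookrightarrow \uzup$ afforded by the convex ordering together with the PBW-based flatness of $\uzup$ over both $B$ and $u_\zeta(e_\alpha)$, is where the bulk of the technical work will lie; the non-Hopf-subalgebra character of $B$ and the resulting subtleties in the multiplicative structure of the spectral sequence will require careful bookkeeping. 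Once column-wise nilpotency is secured, it propagates to $E_\infty$ and, by the finite-length filtration on the abutment, yields the required nilpotent action of $x_\alpha$ on $\opH^\bullet(\uzup,V)$, and hence on $\opH^\bullet(\uzbp,V)$.
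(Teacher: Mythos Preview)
Your proposal has a genuine structural gap, and it stems from placing $u_\zeta(e_\alpha)$ on the \emph{quotient} side of the spectral sequence rather than at the bottom.

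First, a minor issue: the choice $\gamma_1 = \alpha$ is only possible when $\alpha$ is simple, since $\gamma_1 = \beta_1 \in \Pi$ for any reduced expression. You do not address the non-simple case at all; the paper handles it separately (Case~2) via the braid group operator $T_w$ to reduce to the simple case.

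The central gap is the obstacle you yourself flag: you need $\opH^j(B,V)$ to be projective over $u_\zeta(e_\alpha)$ (for the residual action of $\uzup//B$), and you offer no argument for this. There is no reason to expect it to hold. Even in degree zero, $V^B$ is a $u_\zeta(e_\alpha)$-submodule of $V$, and submodules of free $k[t]/(t^\ell)$-modules are typically not free. The ``bridging'' you allude to, via the algebra section and PBW flatness, does not produce projectivity of cohomology; flatness controls restriction of injectives, not the residual action on derived invariants. Compounding this, your claim that the cup product by $x_\alpha$ induces $y_\alpha$-multiplication on the $E_2$-columns is not justified: as the paper explicitly warns (Remark in \S\ref{subsection:LHSspecseq}), the filtration $F$ is \emph{not} compatible with the natural product structures, so one cannot simply read off multiplicativity.

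The paper's proof runs the induction the other way. It orders $\Phi^+$ so that $\alpha = \gamma_1$ is \emph{first}, then inducts on $m$ with $B = \langle E_{\gamma_1},\ldots,E_{\gamma_m}\rangle$ and $A = \langle E_{\gamma_1},\ldots,E_{\gamma_{m+1}}\rangle$, so that $A//B \cong u_\zeta(e_{\gamma_{m+1}})$ is a root subalgebra \emph{unrelated} to $\alpha$. The base case $m=1$ is immediate since $V$ is injective over $u_\zeta(e_\alpha)$. In the inductive step, the hypothesis gives nilpotency of $x_\alpha$ on $\opH^\bullet(B,V)$ directly; the remaining work is an explicit cocycle calculation (using the support condition of \S\ref{subsubsection:one}--\S\ref{subsubsection:two} and the coideal structure of Corollaries~\ref{corollary:leftcoideals} and~\ref{corollary:twistedcoideals}) showing that $(f^{\times r}) \cup g$ lands in a controlled filtration piece and that $\eta([(f^{\times r})\cup g]) = x_\alpha^r \cdot \eta([g])$ on the $E_1$-page. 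This avoids any appeal to projectivity of $\opH^j(B,V)$ and bypasses the incompatibility of $F$ with products by working at the level of cochains.
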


\begin{proof}
First, since $\opH^\bullet(\uzbp,V) = \opH^\bullet(\uzup,V)^{\uzo}$, it suffices to show that the left cup product action (equivalently, the right Yoneda product action) of $x_\alpha$ on $\opH^\bullet(\uzup,V)$ is nilpotent. The proof now breaks down into two cases:

\bigskip

\noindent \emph{Case 1. $\alpha$ is a simple root.} Assume that the reduced expression for $w_0$ chosen in \S \ref{subsection:QEAs} begins with the simple reflection $s_\alpha$, so that $E_{\gamma_1} = E_\alpha$. (Replacing one reduced expression for $w_0$ by another results in a graded $B^+$-automorphism of the ring $\opH(\uzbp,k) \cong S(\fraku^{+*})$. Any such automorphism must map $x_\alpha$ to a non-zero scalar multiple of itself, so there is no harm in making the above assumption on $w_0$.) Since $V$ is injective for $u_\zeta(e_\alpha)$, $\opH^\bullet(u_\zeta(e_\alpha),V) = \opH^0(u_\zeta(e_\alpha),V) \cong V^{u_\zeta(e_\alpha)}$ is finite-dimensional. In particular, the cup product action of $x_\alpha \in \opH^2(\uzbp,k)$ on $\opH^\bullet(u_\zeta(e_\alpha),V)$ must be nilpotent.

Now fix $1 \leq m < N$. Let $A$ be the subalgebra of $\uzbp$ generated by $\set{E_{\gamma_1},E_{\gamma_2},\ldots,E_{\gamma_{m+1}}}$, and let $B$ be the subalgebra of $\uzbp$ generated by $\set{E_{\gamma_1},\ldots,E_{\gamma_m}}$. The algebras $A$ and $B$ are left coideal subalgebras for the twisted Hopf algebra structure on $\uzbp$ by Corollary \ref{corollary:twistedcoideals}, and $B$ is normal in $A$ by Lemma \ref{lemma:commutationrelations}. We prove by induction on $m$ that the cup product action of $x_\alpha$ on $\opH^\bullet(A,V)$ is nilpotent, the case $m = 1$ already having been established. The main tool for the induction argument is the LHS spectral sequence
\begin{equation} \label{eq:LHSspecseq2}
E_1^{i,j} = \Hom_{A//B}(\upB_i(A//B),\opH^j(B,V)) \Rightarrow \opH^{i+j}(A,V).
\end{equation}
The algebra $A$ is free as a right $B$-module by the description of the PBW-basis for $\uzup$, so we can use the construction of \eqref{eq:LHSspecseq2} presented in \S \ref{subsection:LHSspecseq}.

Fix a cocycle representative $f \in \Hom_{\uzbp}(\upB_2(\uzbp),k)$ for $x_\alpha$ as in \S \ref{subsubsection:one}. Let $f^{\times r}$ denote the $r$-fold cup product $f \cup f \cup \cdots \cup f$. To show that the cup product action of $x_\alpha$ on $\opH^\bullet(A,V)$ is nilpotent, it suffices to show that for an arbitrary cocycle $g \in C^n(A,V)$, the iterated cup product $(f^{\times r}) \cup g \in C^{n+2r}(A,V)$ is a coboundary in $C(A,V)$ for all $r$ sufficiently large. To prove that the cocycle $(f^{\times r}) \cup g$ is a coboundary, we show that its image in the $E_1$-page of \eqref{eq:LHSspecseq2} is zero.

Let $g \in F^p C^n(A,V)$ be a cocycle. Then for all $r \geq 1$, $(f^{\times r}) \cup g \in F^pC^{n+2r}(A,V)$. It may happen that $(f^{\times r}) \cup g \in F^sC(A,V)$ for some $s > p$. We claim that for all $r \geq 1$, if $(f^{\times r}) \cup g \neq 0$, then $(f^{\times r}) \cup g \notin F^{n+1}C(A,V)$. In particular, $(f^{\times r}) \cup g \notin F^s C(A,V)$ for any $s > n$ unless the cochain is identically zero. Indeed, if $(f^{\times r}) \cup g \in F^{n+1}C^{n+2r}(A,V)$, then
\[
\left( (f^{\times r})\cup g \right) ([x_1,\ldots,x_{2r},x_{2r+1},\ldots,x_{n+2r}]) \qquad (x_i \in A)
\]
must be identically zero whenever one of $x_{2r},x_{2r+1},\ldots,x_{n+2r}$ is in $K = AB_\varepsilon$. By \eqref{eq:cupproductcocycle},
\[
\left( (f^{\times r})\cup g \right) ([x_1,\ldots,x_{n+2r}]) = (f^{\times r})([x_1^{(1)},\ldots,x_{2r}^{(1)}]) \cdot x_1^{(2)} \cdots x_{2r}^{(2)} g([x_{2r+1},\ldots,x_{n+2r}]),
\]
where we have used the twisted comultiplication $\Deltatau$ for $\uzbp$. From the description of the cocycle $f$ given in \S \ref{subsubsection:one}, the term $(f^{\times r})([x_1^{(1)},\ldots,x_{2r}^{(1)}])$ vanishes identically unless $x_{2r}^{(1)} \in \uzo u_\zeta(e_\alpha)$. But by Corollary \ref{corollary:twistedcoideals}, $x_{2r}^{(1)} \in \uzo u_\zeta(e_\alpha)$ only if $x_{2r} \in K$. This proves that $(f^{\times r}) \cup g \in F^{n+1}C(A,V)$ if and only if $(f^{\times r}) \cup g = 0$.

Now, replacing $g$ if necessary by $(f^{\times s}) \cup g$ for some $s \in \N$, we may assume that $(f^{\times r})\cup g$ has nonzero image in $F^pC(A,V)/F^{p+1}C(A,V)$ for all $r \geq 1$. Then for all $r \geq 1$, the image of the cocycle $(f^{\times r}) \cup g$ in the $E_1$-page of \eqref{eq:LHSspecseq2} is the map $\eta([(f^{\times r})\cup g]) \in \Hom_{A//B}(\upB_p(A//B),\opH^{n+2r-p}(B,V))$. For fixed $x_1,\ldots,x_p \in A//B$, representative cocycles for
\begin{align*}
\eta([(f^{\times r})\cup g])&([x_1,\ldots,x_p]) \in \opH^{n+2r-p}(B,V) \qquad \text{and for} \\
\eta([g])&([x_1,\ldots,x_p]) \in \opH^{n-p}(B,V)
\end{align*}
are defined by \eqref{eq:representativecocycle}. Comparing the representative cocycles, it is straightforward to check that
\begin{equation*}
\eta([(f^{\times r})\cup g])([x_1,\ldots,x_p]) = x_\alpha^r \cdot \eta([g])([x_1,\ldots,x_p]).
\end{equation*}
By induction, $x_\alpha$ acts nilpotently on $\opH^\bullet(B,V)$. Since $A//B \cong u_\zeta(e_{\gamma_m})$ is finite-dimensional, there exists $r \in \N$ such that $\eta([(f^{\times r})\cup g])([x_1,\ldots,x_p]) = 0$ for \emph{any} fixed $x_1,\ldots,x_p \in A//B$. This proves that, for all $r$ sufficiently large, the image of $(f^{\times r}) \cup g$ in $E_1^{p,n+2r-p}$ is zero, hence that $(f^{\times r})\cup g$ is a cocycle in $C^{n+2r}(A,V)$.

\bigskip

\noindent \emph{Case 2: $\alpha$ is not a simple root.} Write $\Phi^+ = \set{\gamma_1,\ldots,\gamma_N}$ with $\gamma_i = w_i(\beta_i)$ as in \S \ref{subsection:QEAs}, and suppose that $\alpha = \gamma_m$, $1 < m < N$. Set $w = w_m$. Let $A'$ be the subalgebra of $\uzup$ generated by the root vectors $\set{E_{\gamma_m},\ldots,E_{\gamma_N}}$. Then $A:=T_{w}^{-1}(A')$ is the subalgebra of $\uzup$ generated by the set
\[
S = \set{E_{\beta_m}=T_{w_m}^{-1}(E_{\gamma_m}),T_{w_m}^{-1}(E_{\gamma_{m+1}}),\ldots,T_{w_m}^{-1}(E_{\gamma_N})}.
\]
Make $V$ into an $A$-module by defining $a.v = T_{w}(a)v$ for all $a \in A$ and $v \in V$. Write ${}^wV$ for $V$ thus considered as an $A$-module. Then ${}^wV$ is projective as a module for $u_\zeta(e_{\beta_m})$, the root subalgebra corresponding to the simple root $\beta_m$.

Note that the elements in the set $S$ are precisely the first $N-m+1$ root vectors for $U_\zeta^+$ corresponding to the reduced expression for $w_0$ beginning with the reduced word $s_{\beta_m}s_{\beta_{m+1}} \cdots s_{\beta_N}$. Then by Case 1, the right Yoneda product action of $x_{\beta_m} \in \opH(\uzbp,k)$ on $\opH^\bullet(A,{}^wV)$ is nilpotent. Applying the map $(T_w^{-1})^*: \opH^\bullet(A,{}^wV) \rightarrow \opH^\bullet(A',V)$, we get that the right Yoneda product action of $(T_w^{-1})^*(x_{\beta_m}) = x_{\gamma_m} = x_\alpha$ on $\opH^\bullet(A',V)$ is nilpotent. The algebra $A'$ is a left coideal subalgebra of $\uzbp$ (with respect to the usual Hopf algebra structure) by Corollary \ref{corollary:leftcoideals}, so by the discussion in \S \ref{subsection:cohomologyproducts}, the right Yoneda product action of $\opH^{2\bullet}(\uzbp,k)$ on $\opH^\bullet(A',k)$ is the same as the left cup product action. So the left cup product action of $x_\alpha \in \opH(\uzbp,k)$ on $\opH^\bullet(A',V)$ is nilpotent.

The remainder of the proof in Case 2 now proceeds by a spectral sequence induction argument similar to that in Case~1. For the induction argument, the algebras $A$ and $B$ now have the form $A = \subgrp{E_{\gamma_{j+1}},\ldots,E_{\gamma_N}}$, $B = \subgrp{E_{\gamma_j},\ldots,E_{\gamma_N}}$ with $1 \leq j \leq m$. These algebras are left coideal subalgebras of $\uzbp$ by Corollary \ref{corollary:leftcoideals}. Choose a cocycle representative $f$ for $x_\alpha$ as in \S \ref{subsubsection:two}, and then proceed as in Case 1. The details are left to the reader.
\end{proof}

\subsection{Support varieties for the small Borel subalgebra}

Proposition \ref{proposition:nilpotentaction} can be strengthened by the observation:

\begin{lemma} \label{lemma:finitelygenerated}
Let $V$ be a finite-dimensional $\uzbp$-module. Then the Yoneda product makes $\opH^\bullet(\uzbp,V)$ a finitely-generated right $\opH^\bullet(\uzbp,k)$-module.
\end{lemma}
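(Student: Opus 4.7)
The plan is to induct on $\dim V$, reduce to one-dimensional modules, and then exploit the known structure of $\opH^\bullet(\uzup,k)$.

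Since $\uzup$ is a finite-dimensional nilpotent augmented algebra and $\uzo$ is commutative semisimple, every simple $\uzbp$-module is one-dimensional, of the form $k_\lambda$ for a character $\lambda$ of $\uzo$. Thus any finite-dimensional $V$ admits a short exact sequence $0\to k_\lambda \to V \to V/k_\lambda \to 0$. The associated long exact sequence in $\opH^\bullet(\uzbp,-)$ is $\opH^\bullet(\uzbp,k)$-linear under the Yoneda product, and since $\opH^\bullet(\uzbp,k)=\opH(\uzbp,k)\cong S(\fraku^{+*})$ is Noetherian by Theorem~\ref{theorem:Bisomorphism}, finite generation of both outer terms forces finite generation of the middle. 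By induction on $\dim V$, the problem reduces to the case $V=k_\lambda$.

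For $V=k_\lambda$: since $\uzup \trianglelefteq \uzbp$ with semisimple quotient $\uzo$, the LHS spectral sequence collapses and identifies $\opH^\bullet(\uzbp,k_\lambda)$ with $\opH^\bullet(\uzup,k_\lambda)^{\uzo}$. The trivial $\uzup$-action on $k_\lambda$ yields a $\uzo$-equivariant isomorphism $\opH^\bullet(\uzup,k_\lambda)\cong \opH^\bullet(\uzup,k)\otimes k_\lambda$, so $\opH^\bullet(\uzbp,k_\lambda)$ is just the $(-\lambda)$-weight space of $\opH^\bullet(\uzup,k)$. The restriction map endows $\opH^\bullet(\uzup,k)$ with the structure of a graded module over $\opH^\bullet(\uzbp,k)=S(\fraku^{+*})$, and since each polynomial generator $x_\alpha$ has $\uzo$-weight $-\ell\alpha\equiv 0$ modulo $\ell$, this action preserves the $\uzo$-weight decomposition, so each weight space is an $\opH^\bullet(\uzbp,k)$-submodule.

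Finally, the Ginzburg--Kumar computation underlying Theorem~\ref{theorem:Bisomorphism} (and carried out in characteristic $p$ in \cite{Drupieski:2009}) yields a graded algebra isomorphism
\[
\opH^\bullet(\uzup,k) \cong \bigwedge\nolimits^{\bullet}(\fraku^{+*}) \otimes S(\fraku^{+*}),
\]
presenting $\opH^\bullet(\uzup,k)$ as a free module over $S(\fraku^{+*})$ of rank $2^N$, where $N=|\Phi^+|$. In particular $\opH^\bullet(\uzup,k)$ is finitely generated over $\opH^\bullet(\uzbp,k)$, so Noetherianness forces each $\uzo$-weight space, and hence $\opH^\bullet(\uzbp,k_\lambda)$, to be finitely generated.

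The main obstacle is pinning down the structure of $\opH^\bullet(\uzup,k)$ as an $\opH^\bullet(\uzbp,k)$-module; once granted, the reductions above are routine. A self-contained alternative would run iterated LHS spectral sequences along the normal filtration $A_0 \subset A_1 \subset \cdots \subset A_N = \uzup$ of \S\ref{subsection:FLkernels}, exploiting that each quotient $A_m/A_{m-1}\cong u_\zeta(e_{\gamma_m})$ is a truncated polynomial ring whose cohomology is free of rank two over its even subring; but as emphasized in Section~\ref{subsection:injectivityrootsubalgebras}, the $A_m$ are only one-sided coideal subalgebras of $\uzbp$, so tracking $\opH^\bullet(\uzbp,k)$-linearity through the spectral sequence differentials requires care.
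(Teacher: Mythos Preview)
Your argument is correct and uses the same essential ingredients as the paper: a composition-series induction via the long exact sequence, the identification $\opH^\bullet(\uzbp,-)=\opH^\bullet(\uzup,-)^{\uzo}$, and the fact (from Ginzburg--Kumar and \cite{Drupieski:2009}) that $\opH^\bullet(\uzup,k)$ is finite over $\opH^\bullet(\uzbp,k)$. The organization differs slightly: the paper runs the induction at the level of $\uzup$, where the only simple module is $k$, obtaining $\opH^\bullet(\uzup,V)$ finite over $\opH^\bullet(\uzup,k)$; it then passes to $\uzo$-invariants in one stroke by invoking \cite[Lemma~1.13]{Friedlander:1986} (finite generation of invariants under a compatible, completely reducible action on a module over a Noetherian ring). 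You instead induct at the level of $\uzbp$, reducing to the simples $k_\lambda$, and handle each of these by identifying $\opH^\bullet(\uzbp,k_\lambda)$ with a single $\uzo$-weight space of $\opH^\bullet(\uzup,k)$. The paper's route is a bit more streamlined and avoids appealing to the explicit $\bw(\fraku^{+*})\otimes S(\fraku^{+*})$ description, needing only that $\opH^\bullet(\uzup,k)$ is finite over its $\uzo$-invariants; your route is more explicit and makes the module structure of $\opH^\bullet(\uzbp,k_\lambda)$ transparent.
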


\begin{proof}
First we prove the corresponding result for $\uzup$. Let $V$ be a finite-dimensional $\uzup$-module, and let $V = V_0 \supset V_1 \supset \cdots \supset V_r \supset V_{r+1} = 0$ be a composition series for $V$. Then for all $0 \leq i \leq r$, $V_i/V_{i+1} \cong k$, the unique simple $\uzup$-module. Now by a standard argument using induction on the dimension of $V$ and the long exact sequence in cohomology, $\opH^\bullet(\uzup,V)$ is a finite module over $\opH^\bullet(\uzup,k)$.

Now assume that $V$ is a $\uzbp$-module. Then $\opH^\bullet(\uzbp,V) = \opH^\bullet(\uzup,V)^{\uzo}$. The space $\opH^\bullet(\uzup,k)$ is a ring under the Yoneda product, and it is finitely-generated over the subring $\opH^\bullet(\uzup,k)^{\uzo} \cong \opH^\bullet(\uzbp,k)$, cf.\ \cite[\S 2.5]{Ginzburg:1993}. The commutative ring $\uzo$ act compatibly and completely reducibly on both $\opH^\bullet(\uzup,V)$ and the Noetherian ring $\opH^\bullet(\uzup,k)$, hence by \cite[Lemma 1.13]{Friedlander:1986}, $\opH^\bullet(\uzbp,V)$ is a finite module over the Noetherian ring $\opH^\bullet(\uzbp,k)$.
\end{proof}

\begin{corollary} \label{corollary:globallynilpotent}
Let $V$ be a finite-dimensional $\uzbp$-module. Let $\alpha \in \Phi^+$, and suppose that $V$ is injective for the root subalgebra $u_\zeta(e_\alpha)$. Then there exists $r \in \N$ such that the cup product action of $x_\alpha^r \in \opH^{2r}(\uzbp,k)$ on $\opH^\bullet(\uzbp,V)$ is identically zero.
\end{corollary}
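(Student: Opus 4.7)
The plan is to bootstrap the pointwise nilpotence provided by Proposition \ref{proposition:nilpotentaction} to a uniform exponent using the finite generation supplied by Lemma \ref{lemma:finitelygenerated}. Concretely, I would first invoke Lemma \ref{lemma:finitelygenerated} to select a finite set of homogeneous generators $z_1,\ldots,z_n$ for $\opH^\bullet(\uzbp,V)$ as a right module over the graded commutative Noetherian ring $\opH^\bullet(\uzbp,k)$. Proposition \ref{proposition:nilpotentaction} then produces, for each $1 \leq i \leq n$, an integer $r_i \in \N$ with $x_\alpha^{r_i} \cup z_i = 0$, and I would set $r = \max_i r_i$.

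The remaining task is to verify that $x_\alpha^r$ annihilates every class in $\opH^\bullet(\uzbp,V)$. Given such a class $z$, I would write $z = \sum_i z_i \cdot y_i$ with $y_i \in \opH^\bullet(\uzbp,k)$ (here $\cdot$ denotes the right Yoneda action), and then reduce $x_\alpha^r \cup z$ to the sum $\sum_i (x_\alpha^r \cup z_i) \cdot y_i$, each summand of which vanishes by the choice of $r$. To justify this reduction, I would use the identification $\xi \cup \eta = (-1)^{\deg\xi \cdot \deg\eta} \eta \circ \xi$ recorded in Section \ref{subsection:cohomologyproducts}, which identifies left cup product by the even-degree element $x_\alpha^r$ with right Yoneda multiplication by $x_\alpha^r$ without sign; associativity of the Yoneda composition, combined with graded commutativity of $\opH^\bullet(\uzbp,k)$ (which yields $y_i \circ x_\alpha^r = x_\alpha^r \circ y_i$ since $x_\alpha^r$ has even degree), then delivers the desired identity $x_\alpha^r \cup (z_i \cdot y_i) = (x_\alpha^r \cup z_i) \cdot y_i$.

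I do not expect a genuine obstacle: the corollary is essentially a formal consequence of Proposition \ref{proposition:nilpotentaction} and Lemma \ref{lemma:finitelygenerated}. The only point requiring care is the sign bookkeeping in the cup/Yoneda dictionary, and because $x_\alpha^r$ always lies in even degree every potential sign collapses. Consequently the main content of the corollary sits in the results it cites rather than in the present argument.
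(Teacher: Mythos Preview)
Your proposal is correct and follows exactly the approach the paper intends: the paper's proof is the single line ``Apply Proposition \ref{proposition:nilpotentaction} and Lemma \ref{lemma:finitelygenerated}, and the commutativity of $\opH(\uzbp,k)$,'' and you have simply unpacked this by choosing finitely many module generators, taking the maximum of the nilpotence exponents, and using the cup/Yoneda identification together with graded commutativity to propagate the vanishing. Your attention to the sign bookkeeping is appropriate but, as you note, unnecessary since $x_\alpha$ has even degree.
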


\begin{proof}
Apply Proposition \ref{proposition:nilpotentaction} and Lemma \ref{lemma:finitelygenerated}, and the commutativity of $\opH(\uzbp,k)$.
\end{proof}

Now we state and prove the main theorem of this section:

\begin{theorem} \label{theorem:rootvectorcriterion}
Let $M$ be a finite-dimensional $\uzbp$-module, and let $\alpha \in \Phi^+$. Then the root vector $e_\alpha \in \fraku^+$ is an element of $\Vbp(M)$ if and only if $M$ is not projective for $u_\zeta(e_\alpha)$.
\end{theorem}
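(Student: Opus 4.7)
The plan is to prove the two directions separately. The forward implication follows quickly from Corollary \ref{corollary:globallynilpotent}, while the converse requires the naturality of support varieties under restriction to $u_\zeta(e_\alpha)$ combined with an explicit identification of the resulting map on varieties.

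For the ``if'' direction, assume that $M$ is projective (equivalently, injective) as a $u_\zeta(e_\alpha)$-module. The enlarged algebra $\uzo u_\zeta(e_\alpha)$ is a Hopf subalgebra of $\uzbp$ (one checks that $\Delta(E_\alpha) = E_\alpha \otimes K_\alpha + 1 \otimes E_\alpha$ and $S(E_\alpha) = -K_\alpha^{-1}E_\alpha$ lie in $\uzo u_\zeta(e_\alpha) \otimes \uzo u_\zeta(e_\alpha)$ and $\uzo u_\zeta(e_\alpha)$, respectively), and since $\uzo$ is semisimple the argument of Lemma \ref{lemma:borelunipotentequivalence} carries over to show that $M$ is projective for $\uzo u_\zeta(e_\alpha)$. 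Because $\uzo u_\zeta(e_\alpha)$ is a Hopf algebra, the diagonal module $\Hom_k(M,M) \cong M \otimes M^*$ is projective for it, and therefore also for $u_\zeta(e_\alpha)$. Corollary \ref{corollary:globallynilpotent} applied to $V = \Hom_k(M,M)$ then yields $r \in \N$ with $x_\alpha^r$ annihilating $\opH^\bullet(\uzbp,\Hom_k(M,M)) \cong \Ext_{\uzbp}^\bullet(M,M)$, so $x_\alpha^r \in J_{\uzbp}(M)$. Since $x_\alpha \in \fraku^{+*}$ is (up to a fixed scalar) the coordinate function dual to $e_\alpha$ and satisfies $x_\alpha(e_\alpha) \neq 0$, we conclude $e_\alpha \notin \Vbp(M)$.

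For the ``only if'' direction, let $\iota: u_\zeta(e_\alpha) \hookrightarrow \uzbp$ be the inclusion, and write $\iota^*: \opH(\uzbp,k) \to \opH(u_\zeta(e_\alpha),k)$ for the restriction map on cohomology. Since the maps defining $J_{\uzbp}(M)$ and $J_{u_\zeta(e_\alpha)}(M)$ are both induced by the natural transformation $k \to \Hom_k(M,M)$, $1 \mapsto \id_M$, and restriction commutes with this change of coefficients, the evident square is commutative and forces $\iota^*(J_{\uzbp}(M)) \subseteq J_{u_\zeta(e_\alpha)}(M)$. On maximal spectra this translates to an induced polynomial map $\iota^{**}: \calV_{u_\zeta(e_\alpha)}(M) \to \Vbp(M)$. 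Moreover, because $u_\zeta(e_\alpha) \cong k[Y]/(Y^\ell)$ with $\ell$ coprime to $\chr(k)$, its even cohomology ring is $\opH(u_\zeta(e_\alpha),k) \cong k[\xi]$ with $\deg \xi = 2$, and a standard periodicity argument shows $\calV_{u_\zeta(e_\alpha)}(M) \subseteq \mathbb{A}^1$ equals $\mathbb{A}^1$ precisely when $M$ is not projective for $u_\zeta(e_\alpha)$ and $\{0\}$ otherwise.

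The crucial step is therefore to identify the image of $\iota^{**}: \mathbb{A}^1 \to \fraku^+$ with the line $k \cdot e_\alpha$. For $\beta \neq \alpha$, a $\uzo$-weight argument immediately gives $\iota^*(x_\beta) = 0$: the adjoint $\uzo$-action on $\opH(u_\zeta(e_\alpha),k)$ carries only weights in $\Z \cdot (-\ell\alpha)$, whereas $x_\beta$ has weight $-\ell\beta$, and no positive root $\beta \neq \alpha$ is a $\Z$-multiple of $\alpha$ in a reduced root system. The main obstacle is the complementary nontriviality $\iota^*(x_\alpha) \neq 0$. I would attack this by choosing an explicit cocycle representative $f \in \Hom_{\uzbp}(\upB_2(\uzbp),k)$ for $x_\alpha$ with support in the subspace spanned by tensors in $\uzo u_\zeta(e_\alpha)$ (as in \S \ref{subsubsection:one}--\ref{subsubsection:two}, using a braid-group twist $T_w$ when $\alpha$ is non-simple), and directly verifying that the restricted cochain on $\upB_2(u_\zeta(e_\alpha))$ is a nonzero cocycle representing the generator of $\opH^2(u_\zeta(e_\alpha),k)$. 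Granted $\iota^*(x_\alpha) = c\xi$ with $c \neq 0$, the image of $\iota^{**}$ is precisely $k \cdot e_\alpha$, so if $M$ is not projective for $u_\zeta(e_\alpha)$ then $\calV_{u_\zeta(e_\alpha)}(M) = \mathbb{A}^1$ forces $k \cdot e_\alpha \subseteq \Vbp(M)$, giving $e_\alpha \in \Vbp(M)$ as required.
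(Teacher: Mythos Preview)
Your argument for the ``if'' direction is essentially the paper's proof: pass to a small Hopf algebra containing $E_\alpha$ and a toral part, use Lemma~\ref{lemma:borelunipotentequivalence} to upgrade projectivity, tensor to get $\Hom_k(M,M)$ projective, and then invoke Corollary~\ref{corollary:globallynilpotent}. The only difference is that the paper uses $u_\zeta(\frakb_\alpha^+):=\langle E_\alpha,K_\alpha\rangle$ rather than your larger $\uzo u_\zeta(e_\alpha)$. One caution: your displayed formula $\Delta(E_\alpha)=E_\alpha\otimes K_\alpha+1\otimes E_\alpha$ is only valid when $\alpha$ is simple; for nonsimple $\alpha$ the coproduct of $E_\alpha$ involves other root vectors (this is exactly what Corollaries~\ref{corollary:leftcoideals} and~\ref{corollary:twistedcoideals} are recording), so $\uzo u_\zeta(e_\alpha)$ is \emph{not} a Hopf subalgebra of $\uzbp$ in general. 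You should either argue via the braid twist $T_w$ to reduce to the simple case, or follow the paper and work with $u_\zeta(\frakb_\alpha^+)$.

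For the ``only if'' direction the paper takes a cleaner route than you do. Rather than restricting to $u_\zeta(e_\alpha)$ and separately checking that $\iota^*(x_\beta)=0$ for $\beta\neq\alpha$ and $\iota^*(x_\alpha)\neq 0$, the paper restricts to the Hopf algebra $u_\zeta(\frakb_\alpha^+)$. There one already knows $\opH(u_\zeta(\frakb_\alpha^+),k)=k[x_\alpha]$, so the restriction map $\opH(\uzbp,k)\to\opH(u_\zeta(\frakb_\alpha^+),k)$ is visibly surjective and induces a closed embedding $\calV_{u_\zeta(\frakb_\alpha^+)}(M)\hookrightarrow\Vbp(M)$ whose image is either $\{0\}$ or the line $k\cdot e_\alpha$. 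If $e_\alpha\notin\Vbp(M)$ then $\calV_{u_\zeta(\frakb_\alpha^+)}(M)=\{0\}$, and a standard argument (finite generation of $\Ext_{u_\zeta(\frakb_\alpha^+)}^\bullet(M,M)$ over the cohomology ring) forces $M$ to be projective over $u_\zeta(\frakb_\alpha^+)$, hence over $u_\zeta(e_\alpha)$. Your approach is not wrong, and your weight argument for $\iota^*(x_\beta)=0$ is fine, but the step you flag as ``the main obstacle'' --- showing $\iota^*(x_\alpha)\neq 0$ via explicit cocycles on $\upB_2$ --- is exactly what the paper sidesteps by keeping $K_\alpha$ in the picture and identifying the target cohomology ring directly as $k[x_\alpha]$.
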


\begin{proof}
First suppose that $M$ is projective (equivalently, injective) for the root subalgebra $u_\zeta(e_\alpha)$. Then $M$ is projective for the Hopf algebra $u_\zeta(\frakb_\alpha^+) := \subgrp{E_\alpha,K_\alpha} \subset \uzbp$ by Lemma \ref{lemma:borelunipotentequivalence}, hence $\Hom_k(M,M) \cong M \otimes M^*$ is also projective for $u_\zeta(\frakb_\alpha^+)$. Applying Lemma \ref{lemma:borelunipotentequivalence} again, $\Hom_k(M,M)$ is injective for $u_\zeta(e_\alpha)$. Now Corollary \ref{corollary:globallynilpotent} asserts that $x_\alpha^r \in J_{\uzbp}(M)$ for some $r \in \N$. It follows that $e_\alpha \notin \Vbp(M)$.

Now suppose that $e_\alpha \notin \Vbp(M)$. The restriction map $\opH(\uzbp,k) \rightarrow \opH(u_\zeta(\frakb_\alpha^+),k) = k[x_\alpha]$ is surjective, and induces a closed embedding
\[
\calV:=\calV_{u_\zeta(\frakb_\alpha^+)}(M) \hookrightarrow \Vbp(M).
\]
The variety $\calV$ is one-dimensional, and its image in $\Vbp(M)$ is either zero or the line spanned by the root vector $e_\alpha$. If $e_\alpha \notin \Vbp(M)$, then $\calV = \set{0}$. By a standard argument (cf.\ \cite[Proposition 1.5]{Friedlander:1987}), this implies that $M$ is projective over the Hopf algebra $u_\zeta(\frakb_\alpha^+)$.
\end{proof}

\begin{corollary} \label{corollary:rootvectorcriterion}
Let $M$ be a finite-dimensional $\uzb$-module, and let $\alpha \in \Phi^+$. Then the root vector $f_\alpha \in \fraku$ is an element of $\Vb(M)$ if and only if $M$ is not projective for $u_\zeta(f_\alpha)$.
\end{corollary}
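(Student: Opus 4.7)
The plan is to deduce the corollary from Theorem \ref{theorem:rootvectorcriterion} by transport of structure along the involution $\omega$ of \S\ref{subsection:FLkernels}. Since $\omega$ restricts to an algebra isomorphism $\uzb \cong \uzbp$ that sends $F_\gamma$ to a nonzero scalar multiple of $E_\gamma$ for each $\gamma \in \Phi^+$, it carries the root subalgebra $u_\zeta(f_\alpha)$ isomorphically onto $u_\zeta(e_\alpha)$. Given a finite-dimensional $\uzb$-module $M$, I would let $M^\omega$ denote the $\uzbp$-module obtained by twisting the $\uzb$-action on $M$ along $\omega^{-1}$. The first observation is then immediate: $M$ is projective (equivalently, injective) for $u_\zeta(f_\alpha)$ if and only if $M^\omega$ is projective for $u_\zeta(e_\alpha)$.

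Next I would transfer the support-variety information. The algebra isomorphism $\omega$ induces a graded algebra isomorphism $\omega^* : \opH(\uzbp,k) \xrightarrow{\sim} \opH(\uzb,k)$, which by functoriality intertwines the cup product actions on cohomology and hence yields a variety isomorphism $\Vb(M) \xrightarrow{\sim} \Vbp(M^\omega)$. The substantive point to verify is that under the identifications of Theorem \ref{theorem:Bisomorphism} and its $\uzb$-analogue, the induced isomorphism $\fraku \xrightarrow{\sim} \fraku^+$ sends the line $k\cdot f_\alpha$ to $k \cdot e_\alpha$. I would check this by tracking $\Uzo$-weights: since $\omega(K_\alpha) = K_\alpha^{-1}$, the map $\omega^*$ negates $\Uzo$-weights, so it carries the weight-$(-\ell\alpha)$ generator $x_\alpha \in \opH^2(\uzbp,k)$ to a weight-$(\ell\alpha)$ element of $\opH^2(\uzb,k)$, and that root space is one-dimensional and spanned by the coordinate function dual to $f_\alpha$.

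With these compatibilities in place, the corollary follows formally: $f_\alpha \in \Vb(M)$ if and only if $e_\alpha \in \Vbp(M^\omega)$, which by Theorem \ref{theorem:rootvectorcriterion} is equivalent to $M^\omega$ not being projective over $u_\zeta(e_\alpha)$, and hence to $M$ not being projective over $u_\zeta(f_\alpha)$. The only mildly delicate step is the weight-tracking needed to identify $\omega^*(x_\alpha)$ with the correct coordinate function in $S(\fraku^*)$; everything else is routine naturality, and no new spectral-sequence or cohomology-product machinery beyond what was set up for $\uzbp$ is required.
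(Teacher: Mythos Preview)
Your proposal is correct and follows exactly the paper's approach: the paper's proof is the single sentence ``Use the fact that $\omega: \uzb \rightarrow \uzbp$ is an automorphism,'' and you have simply unpacked this transport-of-structure argument in detail, including the weight-tracking that confirms $\omega^*$ carries the coordinate function dual to $e_\alpha$ to (a scalar multiple of) the one dual to $f_\alpha$.
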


\begin{proof}
Use the fact that $\omega: \uzb \rightarrow \uzbp$ is an automorphism.
\end{proof}

\begin{remark}
We would like to generalize Corollary \ref{corollary:rootvectorcriterion} from root vectors to linear combinations of elements of $\fraku$, but it is not clear what the corresponding subalgebras of $\uzb$ should be. We would also like to generalize Corollary \ref{corollary:rootvectorcriterion} from $\Vb(M)$ to $\Vg(M)$, but this seems prohibitively difficult under the present strategy, since we have less control over the cocycle representatives for the $x_\alpha$ after lifting them to $\opH(\uzg,k)$.
\end{remark}

\section{Applications} \label{section:applications}

\subsection{A geometric proof of Theorem \ref{theorem:injectivitycriterion}} \label{subsection:secondproof}

We can now provide a second, geometric proof of the $r=0$ case of Theorem \ref{theorem:injectivitycriterion}. The argument is formally similar to that in \cite[\S 3.4]{Friedlander:1987}. We continue to impose the conditions of Assumption \ref{assumption:ellandp}.

Let $M$ be a finite-dimensional rational $\Uzo \uzg$-module, and suppose that $M$ is injective for the Borel subalgebras $\uzb$ and $\uzbp$. Then $V:= M \otimes M^*$ is injective for $\uzb$ and $\uzbp$. By Proposition \ref{proposition:injectivefiltration}, the injectivity of $V$ for $\uzbp$ implies that $V$ admits a filtration by $\Uzo \uzg$-submodules with factors of the form $\wh{Z}_0'(\lambda)$, $\lambda \in X$. By Frobenius reciprocity and the exactness of induction from $\Uzo \uzb$ to $\Uzo \uzg$, $\opH^\bullet(\uzg,\wh{Z}_0'(\lambda)) \cong \opH^\bullet(\uzb,\lambda)$. This isomorphism is compatible with the cup product action of $\opH(\uzg,k)$. Then for each $\lambda \in X$, $\ker(\res) \subseteq J_{\uzg}(\wh{Z}_0'(\lambda))$, where $\res: \opH(\uzg,k) \rightarrow \opH(\uzb,k)$ denotes the cohomological restriction map. It now follows from the long exact sequence in cohomology and by induction on the number of factors $\wh{Z}_0'(\lambda)$, $\lambda \in X$, in the $\Uzo \uzg$-filtration of $V$ that $\ker(\res)$ is contained in the radical of the ideal $J_{\uzg}(M)$. This implies that $\Vg(M) \subset \fraku$. Since $\omega: \uzb \stackrel{\sim}{\rightarrow} \uzbp$ and $(\omega)^*: \opH(\uzb,k) \stackrel{\sim}{\rightarrow} \opH(\uzbp,k)$ are isomorphisms, we obtain by symmetry that the injectivity of $V$ for $\uzb$ implies that $\Vg(M) \subset \fraku^+$. Since $\fraku \cap \fraku^+ = \set{0}$, the injectivity of $M$ for $\uzb$ and $\uzbp$ implies that $\Vg(M) = \set{0}$, hence that $M$ is injective for $\uzg$.

We have thus geometrically reduced the problem of Theorem \ref{theorem:injectivitycriterion} to the statement of Theorem \ref{theorem:injectiveBorel}. Now, since $M$ is a rational $\Uzo$-module, the support variety $\Vb(M)$ is a $T$-stable subvariety of $\fraku$. Similarly, $\Vbp(M)$ is a $T$-stable subvariety of $\fraku^+$. By Theorem \ref{theorem:rootvectorcriterion} and Corollary \ref{corollary:rootvectorcriterion}, neither support variety contains any root vectors. But any non-zero $T$-stable subvariety of $\fraku$ (resp.\ $\fraku^+$) must contain a root vector. We conclude that $\Vb(M) = \set{0} = \Vbp(M)$, hence that the injectivity of $M$ for each root subalgebra $u_\zeta(e_\alpha)$, $u_\zeta(f_\alpha)$, $\alpha \in \Phi^+$, implies the injectivity of $M$ for $\uzb$ and $\uzbp$.

\subsection{Support varieties for the small quantum group} \label{subsection:Gsupportvarieties}

We continue to impose the conditions of Assumption \ref{assumption:ellandp}. The next result and its proof are a translation to the quantum setting of \cite[\S 1.2]{Friedlander:1986b}.

\begin{theorem} \label{theorem:Gorbitbvariety}
Let $M$ be a finite-dimensional $\Uzg$-module. Then $\Vg(M) = G \cdot \Vb(M)$, where $G \cdot \Vb(M)$ is the orbit of $\Vb(M) \subset \calN$ under the adjoint action of $G$.
\end{theorem}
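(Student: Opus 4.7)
The plan is to prove the two inclusions separately. The containment $G \cdot \Vb(M) \subseteq \Vg(M)$ is essentially formal, while the reverse uses the spectral sequence \eqref{eq:uzgLHSspecseq} in a Springer-theoretic way.

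For $G \cdot \Vb(M) \subseteq \Vg(M)$, I would begin with the restriction map $\Ext^\bullet_{\uzg}(M,M) \to \Ext^\bullet_{\uzb}(M,M)$, which is a ring homomorphism sending $\id_M$ to $\id_M$ and is compatible (via $\res \colon \opH(\uzg,k) \to \opH(\uzb,k)$) with the cup-product actions. Consequently $\res(J_{\uzg}(M)) \subseteq J_{\uzb}(M)$. Under the identifications of Theorems \ref{theorem:Gisomorphism} and \ref{theorem:Bisomorphism}, $\res$ is restriction of functions along the closed embedding $\fraku \hookrightarrow \calN$, so dually $\Vb(M) \subseteq \Vg(M) \cap \fraku$. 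Because $M$ is a $\Uzg$-module, $\Vg(M)$ is $G$-stable, which yields $G \cdot \Vb(M) \subseteq \Vg(M)$.

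For the reverse containment, I would apply \eqref{eq:uzgLHSspecseq} with $V = M \otimes M^*$:
\[
E_2^{i,j}(V) = R^i \ind_B^G \opH^j(\uzb,V) \Rightarrow \opH^{i+j}(\uzg,V) = \Ext^{i+j}_{\uzg}(M,M).
\]
By Section \ref{subsection:spectralsequences}, $E_r(V)$ is a module over the algebra spectral sequence $E_r(k)$, so the abutment $\Ext^\bullet_{\uzg}(M,M)$ is filtered by $\opH(\uzg,k)$-submodules with associated graded $E_\infty^{\bullet,\bullet}(V)$; since $R^i \ind_B^G$ vanishes for $i > \dim G/B$, the filtration has finite length. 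It therefore suffices to show that each $E_2^{i,j}(V)$ is supported on $G \cdot \Vb(M)$ as a $k[\calN]$-module.

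The heart of this support estimate is geometric. The induction $\ind_B^G$ is push-forward along the Springer-type map $p \colon G \times^B \fraku \to \calN$, whose image is $\calN$ and whose fiber over a nilpotent $y$ parametrises the Borel subalgebras containing $y$. The $k[\fraku]$-module $\opH^j(\uzb,V)$ is supported on $\Vb(M)$ by Theorem \ref{theorem:Bisomorphism}, so the associated sheaf on $G \times^B \fraku$ is supported on $G \times^B \Vb(M)$, whose image under $p$ is $G \cdot \Vb(M)$. The push-forward $R^i \ind_B^G \opH^j(\uzb,V)$ is thus supported on $G \cdot \Vb(M)$, completing the argument. The principal technical obstacle I anticipate is matching the $\opH(\uzg,k)$-action on the spectral sequence with the geometric push-forward action on the Springer side; this is exactly the reason Section \ref{subsection:spectralsequences} constructs \eqref{eq:uzgLHSspecseq} by tensoring with $k[G]^{(1)}$, so that the cup-product action of $\opH(\uzg,k)$ and the edge maps take the explicit form needed here.
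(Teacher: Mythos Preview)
Your argument is correct and rests on the same core mechanism as the paper's: the multiplicative spectral sequence \eqref{eq:uzgLHSspecseq}, the vanishing of $R^i\ind_B^G$ for $i>\dim G/B$, and the compatibility of restriction with cup products for the easy inclusion.

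The packaging of the hard inclusion differs. The paper works algebraically: it introduces auxiliary ideals $K_M\subseteq L_M\subseteq S(\g^*)$ consisting of functions whose $G$-translates restrict into $I_M$ (resp.\ $\sqrt{I_M}$), shows via a Noetherian argument that $L_M\subseteq\sqrt{K_M}$, and then uses the commutative diagram of edge maps to see that the image of $K_M$ in $\opH^\bullet(\uzg,V)$ has positive filtration degree, so $(K_M)^{d+1}$ annihilates. You instead argue geometrically that each $E_2^{i,j}(V)=R^i\ind_B^G\opH^j(\uzb,V)$ is already supported on $G\cdot\Vb(M)$, reading $R^i\ind_B^G$ as derived push-forward along $p\colon G\times^B\fraku\to\calN$ and using that $\opH^j(\uzb,V)$ is supported on $\Vb(M)$. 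Your route is conceptually cleaner and dispenses with the $K_M/L_M$ bookkeeping, but it requires the identification you flag: that the $k[\calN]$-action on $E_2^{i,j}(V)$ coming from the spectral sequence agrees with the $\ind_B^G k[\fraku]$-module structure on $R^i\ind_B^G\opH^j(\uzb,V)$ (equivalently, with the $\calO_{\calN}$-structure on $R^ip_*$). The paper sidesteps this by using only the edge map and the abstract fact that $E_r(V)$ is an $E_r(k)$-module; its argument is more elementary, though at the cost of the extra ideal-theoretic step.
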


\begin{proof}
Make the identifications $\opH(\uzg,k) = k[\calN]$ and $\opH(\uzb,k) = S(\fraku^*)$, and define ideals
\begin{align*}
S(\fraku^*) \supset I_M &:= J_{\uzb}(M), \\
S(\g^*) \supset J_M &:= J_{\uzg}(M), \\
S(\g^*) \supset K_M &:= \set{ f \in S(\g^*): \, \forall \, g \in G, (g \cdot f)|_\fraku \in I_M}, \\
S(\g^*) \supset L_M &:= \set{ f \in S(\g^*): \, \forall \, g \in G, (g \cdot f)|_\fraku \in \sqrt{I_M}}.
\end{align*}
Then $I_M$ and $J_M$ are the ideals defining $\Vb(M)$ and $\Vg(M)$, respectively, and $L_M$ is the ideal of functions defining the closed subvariety $G \cdot \Vb(M)$ of $\calN$. To prove the theorem it suffices to show that $L_M = \sqrt{J_M}$. (Note that $L_M$ is already a radical ideal.) The inclusion $\sqrt{J_M} \subseteq L_M$ is easy. Indeed, since the restriction map $\opH(\uzg,k) \rightarrow \opH(\uzb,k)$ is just the restriction of functions, it is in particular surjective, so $\Vb(M)$ identifies naturally with a closed suvariety of $\Vg(M)$. The support variety $\Vg(M)$ is $G$-stable, so also $G \cdot \Vb(M) \subseteq \Vg(M)$. But this last inclusion is equivalent to the ideal inclusion $\sqrt{J_M} \subseteq L_M$. So it remains to show that $L_M \subseteq \sqrt{J_M}$. We do this by proving the inclusions $L_M \subseteq \sqrt{K_M}$ and $K_M \subseteq \sqrt{J_M}$.

Given $f \in L_M$, let $I_f \subset S(\g^*)$ be the ideal generated by $\set{g \cdot f: g \in G}$. Since $S(\g^*)$ is a Noetherian ring, $I_f$ is finitely generated, say by the set $\set{g_1\cdot f, \ldots, g_j \cdot f}$. For each $1 \leq i \leq j$, there exists $a_i \in \N$ such that $(g_i \cdot f)^{a_i}|_\fraku \in I_M$. Now fix $A \in \N$ with $A \geq \min \set{j \cdot a_i}$, and let $g \in G$. There exist $\lambda_i \in S(\g^*)$ such that $g \cdot f = \sum_{i=1}^j \lambda_i(x_i \cdot f)$. Then $g \cdot f^A = (g \cdot f)^A = ( \sum_{i=1}^j \lambda_i(x_i \cdot f) )^A$. Expanding the last expression and restricting it to $\fraku$, we see that each summand is an element of $I_M$, because in the expansion of $(\sum_{i=1}^j \lambda_i(x_i \cdot f))^A$, each summand is divisible by $(x_i \cdot f)^{a_i}$ for some $1 \leq i \leq j$. Then $f^A \in K_M$, so $L_M \subseteq \sqrt{K_M}$.

It remains to show $K_M \subseteq \sqrt{J_M}$. By the discussion in \S \ref{subsection:spectralsequences}, there exists a commutative diagram
\begin{equation} \label{eq:commdiagram}
\begin{split}
\xymatrix{
K_M \ar@{^(->}[r] \ar@{->}[d] & S^\bullet(\g^*) \ar@{->}[d] \ar@{->}[r] &
\opH^{2\bullet}(\uzg,k) \ar@{->}[d]^{\sim} \ar@{->}[r] & \opH^{2\bullet}(\uzg,\Hom_k(M,M)) \ar@{->}[d] \\
\ind_B^G I_M \ar@{->}[r] & \ind_B^G S^{\bullet}(\fraku^*) \ar@{->}[r]^(.4){\sim} & \ind_B^G \opH^{2\bullet}(\uzb,k) \ar@{->}[r] & \ind_B^G \opH^{2\bullet}(\uzb,\Hom_k(M,M)).
}
\end{split}
\end{equation}
Each vertical map is induced by Frobenius reciprocity from the corresponding restriction map: $K_M \rightarrow I_M$, $S^\bullet(\g^*) \rightarrow S^\bullet(\fraku^*)$, etc. The third and fourth vertical maps are the edge maps for the spectral sequences $E_2(k)$ and $E_2(V)$ discussed in \S \ref{subsection:spectralsequences}, where $V:= \Hom_k(M,M)$. The maps into the right-most column are induced by the map $k \rightarrow \Hom_k(M,M)$, $1 \mapsto \id$.

Composition along the bottom row in \eqref{eq:commdiagram} is zero, because the composite map $I_M \rightarrow S^\bullet(\fraku^*) \cong \opH^{2\bullet}(\uzb,k) \rightarrow \opH^{2\bullet}(\uzb,V)$ is zero by the definition of $I_M$. This implies that the composite map along the top row and down the right-most column, from $K_M$ to $\ind_B^G \opH^{2\bullet}(\uzb,V)$, is zero. Then the image of $K_M$ in $\opH^{2\bullet}(\uzg,V)$ must have positive filtration degree with respect to the filtration on $\opH^\bullet(\uzg,V)$ coming from the spectral sequence \eqref{eq:uzgLHSspecseq}. We have $E_2^{i,j}(V) = 0$ for $i > d:=\dim(G/B)$, because $R^i \ind_B^G(-) = 0$ for $i > d$ \cite[II.4.2]{Jantzen:2003}. Then $E_\infty^{i,j}(V) = 0$ for $i>d$. Since $E_r(V)$ is a module over $E_r(k)$, we conclude that $(K_M)^{d+1}$ maps to zero in $\opH^\bullet(\uzg,V)$, hence that $K_M \subseteq \sqrt{J_M}$.
\end{proof}

\begin{corollary}
Let $M$ be a finite-dimensional $\Uz$-module. Let $\alpha_h \in \Phi^+$ be the highest positive root. Then $M$ is injective for $\uzg$ if and only if $M$ is injective for the root subalgebra $u_\zeta(f_{\alpha_h})$.
\end{corollary}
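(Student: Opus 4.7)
The forward direction is immediate from the PBW decomposition, which makes $\uzg$ a free right module over $u_\zeta(f_{\alpha_h})$, so that \cite[Lemma I.4.3]{Barnes:1985} transports injectivity from $\uzg$ to $u_\zeta(f_{\alpha_h})$. The content lies in the converse, and my plan is to reduce it to a structural property of $B$-stable subvarieties of $\fraku$.

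Supposing $M|_{u_\zeta(f_{\alpha_h})}$ is injective, Corollary \ref{corollary:rootvectorcriterion} would translate the hypothesis into the geometric statement $f_{\alpha_h} \notin \Vb(M)$. Because $M$ is a $\Uz$-module, $\Vb(M)$ is a $B$-stable closed conical subvariety of $\fraku$. The strategy is to upgrade $f_{\alpha_h} \notin \Vb(M)$ to $\Vb(M) = \set{0}$; Theorem \ref{theorem:Gorbitbvariety} would then yield $\Vg(M) = G \cdot \Vb(M) = \set{0}$, whence $M$ is injective (equivalently projective) over the Hopf algebra $\uzg$ by standard support variety theory.

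The crux of the argument, and what I expect to be the main obstacle, is the following structural claim: \emph{every nonzero $B$-stable closed conical subvariety $V \subseteq \fraku$ contains $f_{\alpha_h}$.} I would prove this by passing to the projective space $\mathbb{P}(\fraku)$, where $\mathbb{P}(V)$ becomes a nonempty closed $B$-stable subvariety of a projective variety, so that Borel's fixed point theorem produces a $B$-fixed point, corresponding to a $B$-stable line $\ell \subseteq V$. Such $\ell$ must be a $T$-weight line $k \cdot f_\alpha$, and $U$-stability forces $\ad(e_\beta) f_\alpha = 0$ for every $\beta \in \Phi^-$. Weight considerations would show this happens iff $-\alpha$ is minimal in $\Phi$; irreducibility of $\Phi$ then forces $\alpha = \alpha_h$, giving $\ell = k \cdot f_{\alpha_h} \subseteq V$. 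Combined with $f_{\alpha_h} \notin \Vb(M)$, this yields $\Vb(M) = \set{0}$, completing the proof. The delicate bookkeeping is simply aligning conventions: $\fraku = \Lie(U)$ with $U = \subgrp{U_\alpha : \alpha \in \Phi^-}$ has $T$-weights $\set{-\alpha : \alpha \in \Phi^+}$, and $-\alpha_h$ is indeed the unique such weight minimal under addition of elements of $\Phi^-$.
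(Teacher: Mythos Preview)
Your argument is correct and follows the same overall architecture as the paper: translate via Corollary \ref{corollary:rootvectorcriterion} to geometry, use Theorem \ref{theorem:Gorbitbvariety} to pass between $\Vg(M)$ and $\Vb(M)$, and exploit $B$-stability of $\Vb(M)$ to force the highest root vector into any nonzero such variety. Two minor implementation differences are worth noting. For the forward direction the paper argues geometrically (the closed embedding $\Vb(M)\hookrightarrow\Vg(M)$ forces $\Vb(M)=\{0\}$, then apply Corollary \ref{corollary:rootvectorcriterion}), whereas your PBW/freeness argument is equally valid and more direct. For the structural claim, the paper does not invoke Borel's fixed point theorem: it observes that $T$-stability yields some root vector $f_\alpha\in\Vb(M)$, and then climbs to $f_{\alpha_h}$ by noting that whenever $\alpha+\beta\in\Phi^+$ one has $f_{\alpha+\beta}\in\overline{U_\beta\cdot f_\alpha}\subseteq\Vb(M)$. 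Your Borel fixed point argument is a clean alternative; just be careful with notation---the elements of $\fraku$ acting via $\ad$ are the $f_\beta$ for $\beta\in\Phi^+$, not ``$e_\beta$ for $\beta\in\Phi^-$''.
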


\begin{proof}
The surjectivity of the restriction map $\opH(\uzg,k) \rightarrow \opH(\uzb,k)$ implies that there exists a closed embedding $\Vb(M) \hookrightarrow \Vg(M)$. If $M$ is injective for $\uzg$, then $\Vg(M) = \set{0}$, so necessarily $\Vb(M) = \set{0}$. Then $M$ is injective for $u_\zeta(f_{\alpha_h})$ by Corollary \ref{corollary:rootvectorcriterion}.

Conversely, suppose $M$ is not injective for $\uzg$. Then $\Vg(M) \neq \set{0}$ \cite[Proposition 2.4]{Feldvoss:2009}. (The last statement uses the fact that for every pair of finite-dimensional $\uzg$-modules $M$ and $N$, the space $\Ext_{\uzg}^\bullet(M,N)$ is a finitely-generated $\opH^\bullet(\uzg,k)$-module under the cup product, cf. \cite{Bendel:2009} or \cite[Theorem 4.24]{Drupieski:2009}.) By Theorem \ref{theorem:Gorbitbvariety}, this implies that $\Vb(M) \neq \set{0}$. Now, since $M$ is a $\Uzb$-module, $\Vb(M)$ is a non-zero closed $B$-stable subvariety of $\fraku$. In particular, $\Vb(M)$ is a $T$-stable closed subvariety of $\fraku$, so it must contain a root vector $f_\alpha \in \fraku$, for some $\alpha \in \Phi^+$. If $\beta \in \Phi^+$ and $\alpha + \beta \in \Phi^+$, then $f_{\alpha + \beta} \in \ol{U_\beta \cdot f_\alpha}$, the $U_\beta$-orbit closure of $f_\alpha$. Since $\Vb(M)$ is closed, it follows that $f_{\alpha_h} \in \Vb(M)$, hence that $M$ is not injective for $u_\zeta(f_{\alpha_h})$ by Corollary \ref{corollary:rootvectorcriterion}.
\end{proof}

\bibliographystyle{amsplain}
\bibliography{injectivity_criterion}

\end{document}